\title{Hyperbolicity and model-complete fields}
\author{Micha\l{} Szachniewicz}  
\address{Micha\l{} Szachniewicz, Mathematical Institute, University of Oxford, Andrew Wiles Building,
Radcliffe Observatory Quarter, Woodstock Road, Oxford OX2 6GG, UK}
\email{michal.szachniewicz@maths.ox.ac.uk}
\author{Jinhe Ye}
\address{Jinhe Ye, Mathematical Institute, University of Oxford, Andrew Wiles Building,
Radcliffe Observatory Quarter, Woodstock Road, Oxford OX2 6GG, UK}
\email{jinhe.ye@maths.ox.ac.uk}
\DeclareFontFamily{U}{BOONDOX-calo}{\skewchar\font=45 }
\DeclareFontShape{U}{BOONDOX-calo}{m}{n}{
  <-> s*[1.05] BOONDOX-r-calo}{}
\DeclareFontShape{U}{BOONDOX-calo}{b}{n}{
  <-> s*[1.05] BOONDOX-b-calo}{}
\DeclareMathAlphabet{\mathcalboondox}{U}{BOONDOX-calo}{m}{n}
\SetMathAlphabet{\mathcalboondox}{bold}{U}{BOONDOX-calo}{b}{n}
\DeclareMathAlphabet{\mathbcalboondox}{U}{BOONDOX-calo}{b}{n}
\newcommand*{\rom}[1]{\expandafter\@slowromancap\romannumeral #1@}
\newcommand{\Gal}{\operatorname{Gal}}
\newcommand{\Spec}{\operatorname{Spec}}
\newcommand{\Hom}{\operatorname{Hom}}
\newcommand{\Hilb}{\operatorname{Hilb}}
\newcommand{\Frac}{\operatorname{Frac}}
\newcommand{\acl}{\operatorname{acl}}
\newcommand{\XF}{\mathrm{XF}}
\newcommand{\Abs}{\mathrm{Abs}}
\newcommand{\ACF}{\mathrm{ACF}}
\newcommand{\QQ}{\mathbb{Q}}
\newcommand{\PP}{\mathbb{P}}
\newcommand{\cO}{\mathcal{O}}
\newcommand{\cX}{\mathcal{X}}
\newcommand{\cY}{\mathcal{Y}}
\newcommand{\cZ}{\mathcal{Z}}
\newcommand{\cI}{\mathcal{I}}
\newcommand{\cU}{\mathcal{U}}
\DeclareMathOperator{\id}{id}
\DeclareMathOperator{\Sch}{Sch}
\DeclareMathOperator{\Set}{Set}
\DeclareMathOperator{\cdiv}{div}
\newtheorem*{claim-star}{Claim}
\newtheorem{theorem}{Theorem}[section] % numbered like the section
\newtheorem{lemma}[theorem]{Lemma}
\newtheorem{prop-def}[theorem]{Proposition-Definition}
\newtheorem{corollary}[theorem]{Corollary}
\newtheorem{fact-eh}[theorem]{Fact(?)}
\newtheorem{proposition}[theorem]{Proposition}
\newtheorem{proposition-eh}[theorem]{Proposition(?)}
\newtheorem*{theorem-star}{Theorem}
\newtheorem*{conjecture-star}{Conjecture}
\newtheorem*{question-star}{Question}
\newtheorem*{lemma-star}{Lemma}
\newtheorem*{main-theorem}{Main Theorem}
\theoremstyle{definition}
\newtheorem{definition}[theorem]{Definition}
\newtheorem{example}[theorem]{Example}
\newtheorem{question}[theorem]{Question}
\newtheorem{remark}[theorem]{Remark}
\theoremstyle{remark}
\newtheorem*{acknowledgment}{Acknowledgments}
\newcommand{\cC}{\mathcal{C}}
\newcommand{\cL}{\mathcal{L}}
\newcommand{\ZZ}{\mathbb{Z}}
\newcommand{\RR}{\mathbb{R}}
\newcommand{\CC}{\mathbb{C}}
\newcommand{\Kfield}{k}
\newcommand{\KK}{k}
\def\indecomposably_hyperbolic{indecomposably hyperbolic}
\DeclareMathOperator{\nc}{nc}
\DeclareMathOperator{\con}{c}
\DeclareMathOperator{\Pic}{Pic}
\DeclareMathOperator{\NS}{NS}
\newcommand{\un}{\underline}
\def\Ind#1#2{#1\setbox0=\hbox{$#1x$}\kern\wd0\hbox to 0pt{\hss$#1\mid$\hss}
\lower.9\ht0\hbox to 0pt{\hss$#1\smile$\hss}\kern\wd0}
\def\notind#1#2{#1\setbox0=\hbox{$#1x$}\kern\wd0
\hbox to 0pt{\mathchardef\nn=12854\hss$#1\nn$\kern1.4\wd0\hss}
\hbox to 0pt{\hss$#1\mid$\hss}\lower.9\ht0 \hbox to 0pt{\hss$#1\smile$\hss}\kern\wd0}
\begin{document}

\begin{abstract}
    We study model-complete fields that avoid a given quasi-project variety $V$. There is a close connection between hyperbolicity of $V$ and the existence of the model companion for the theory of characteristic-zero fields avoiding rational points on $V$. This gives a model theoretic notion of hyperbolicity that we call excludability.

   In particular, we show that if $V$ is a Brody hyperbolic projective variety over $\QQ$ with $V(\QQ) = \varnothing$, then the model companion, called $V\XF$, exists. We also study some model-theoretic properties of $V\XF$. This extends the results for curves by Will Johnson and the second author. 
\end{abstract}

\maketitle
\section{Introduction}

As suggested by Artin, maximal subfields of an algebraic closed field avoiding a given set of points exhibit interesting properties, see~\cite{dig_holes_1,dig_holes_2,dig_holes_3,dig_holes_4,dig_holes_5}. Similarly, one could ask about `maximal' fields avoiding points in certain varieties. More precisely, let $\widetilde{V}$ be a smooth projective variety over a characteristic zero field $\KK_0$ with an open subset $V \subset \widetilde{V}$ such that $V(\KK_0) = \varnothing$. Let $T_V$ be the theory of fields extending $\KK_0$ that do not have points in $V$. A field $\KK \models T_V$ can be thought of as `maximal', if it is \emph{existentially closed}, which means that whenever $X$ is a quasi-projective variety over $\KK$ with $X(l) \neq \varnothing$ for some $\KK \subset l \models T_V$, then $X(\KK) \neq \varnothing$. Equivalently, a field $\KK$ extending $\KK_0$ with $V(\KK) = \varnothing$ is existentially closed, if and only if the following conditions hold:
\begin{itemize}
        \item[$(\mathrm{I}_V)$] if $l/\KK$ is any proper finite extension,
        then $V(l) \ne \varnothing$;
        \item[$(\mathrm{II}_V)$] for any geometrically integral
        variety $X/\KK$, either there is a non-constant rational map $f : X \dashrightarrow V_{\KK}$ over $\KK$ or $X(\KK)$ is Zariski dense in $X$.
\end{itemize}
We present an algebro-geometric proof of this equivalence in Section~\ref{sec:prelim}.

If the class of existentially closed models of $T_V$ can be axiomatized by first-order formulas we say that $T_V$ has a \emph{model companion} and the model companion of $T_V$ is the set of such axioms. 

In \cite{johnson2023curveexcluding} the authors proved that if $V$ is a curve $C$ of genus greater or equal two, then the model companion exists. 
\begin{theorem}~\cite[Theorem 2.1]{johnson2023curveexcluding}
    The theory $T_C$ has a model companion $C\XF$.
\end{theorem}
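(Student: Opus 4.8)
The plan is to prove that the class of existentially closed models of $T_C$ is elementary, which is equivalent to the existence of a model companion; the model companion, when it exists, is exactly the theory axiomatizing the existentially closed models. By the equivalence recalled in the excerpt, a field $k \models T_C$ is existentially closed precisely when it satisfies $(\mathrm{I}_C)$ and $(\mathrm{II}_C)$, so it suffices to exhibit a (recursively enumerable) set of first-order axioms expressing these two conditions, and then to set $C\XF$ to be $T_C$ together with these axioms.

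Condition $(\mathrm{I}_C)$ is the easy half. For each fixed $n \geq 2$ the statement ``for every monic irreducible polynomial $p$ of degree $n$, the curve $C$ has a point over $k[x]/(p)$'' is first-order: irreducibility of a degree-$n$ polynomial is definable in its coefficients, and the existence of a $C$-point over the $n$-dimensional $k$-algebra $k[x]/(p)$ is an existential formula in those coefficients, since such a point is a bounded tuple of elements of $k$. Quantifying over all $n$ yields a first-order schema equivalent to $(\mathrm{I}_C)$.

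For $(\mathrm{II}_C)$ I would first carry out the standard reduction that replaces Zariski density by the mere existence of a rational point. Since an open subvariety $U \subseteq X$ is again geometrically integral and admits a non-constant rational map to $C$ if and only if $X$ does, requiring that \emph{every} geometrically integral variety not mapping to $C$ have a $k$-point already forces $X(k)$ to be dense. Next, every variety over $k$ is cut out by finitely many polynomials whose coefficients serve as parameters, so it occurs as a fibre $X_t$ of a family $\mathcal{X} \to S$ defined over $\KK_0$, and the locus of $t$ for which $X_t$ is geometrically integral is definable. Thus $(\mathrm{II}_C)$ becomes a schema, one axiom per family, of the form ``for all $t$ with $X_t$ geometrically integral, either $X_t$ admits a non-constant rational map to $C$ or $X_t(k) \neq \varnothing$''. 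The second disjunct is existential, so the whole axiom is first-order provided the first disjunct cuts out a constructible subset of $S$.

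The heart of the matter, and the step where hyperbolicity is indispensable, is therefore the geometric claim that in a family $\mathcal{X} \to S$ of geometrically integral varieties the set $\{t : X_t \text{ admits a non-constant rational map to } C\}$ is constructible. I would prove this via the relative scheme of morphisms, after resolving the indeterminacy of rational maps into the projective curve $C$: because $C$ has genus at least two its canonical bundle is ample, which bounds the degree of any non-constant map into $C$ and forces the relevant Hom scheme to be of finite type over $S$, while the de Franchis--Severi theorem, giving finiteness and rigidity of non-constant maps from a fixed variety to a curve of general type, is precisely what makes this boundedness uniform across the family. Constructibility of the image in $S$ then follows from Chevalley's theorem. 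Assembling $T_C$ with the $(\mathrm{I}_C)$- and $(\mathrm{II}_C)$-schemata produces a first-order theory whose models are exactly the existentially closed models of $T_C$, and I expect the finite-type/boundedness step to be the main obstacle, since without the hyperbolicity of $C$ the space of maps to $C$ need neither be bounded nor give rise to a constructible locus.
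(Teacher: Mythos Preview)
The theorem you are asked to prove is not proved in this paper at all: it is merely cited from \cite{johnson2023curveexcluding} in the introduction. What the present paper does prove is the generalization (Theorem~\ref{theorem_main_result_of_the_paper}), and your sketch is essentially that argument specialized back to curves. The paper's Section~\ref{sec:model_comp} follows exactly the architecture you outline: reduce $(\mathrm{II}_C)$ to a schema over a countable family of smooth projective families $\cX_i \to S_i$ (Proposition~\ref{proposition_0_def_family_of_flat_morphisms}), and then show that the locus of $s$ for which $\cX_s$ admits a non-constant map to $C$ is constructible by proving that the relevant Hom scheme $\un{\Hom}_S^{\nc}(\cX, C_S)$ is quasi-projective over $S$ (Proposition~\ref{proposition_quasi_projectivity_of_H}).

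Two small points of comparison. First, your phrase ``resolving the indeterminacy'' is handled in the paper not by blowing up but by pureness: since $g(C)\geq 2$ there are no non-constant maps $\PP^1 \to C$, so $C$ is geometrically pure and every rational map from a smooth source extends (Proposition~\ref{proposition_geometrical_pureness_under_base_change}); this is why the paper can work with the Hom scheme of regular maps rather than a scheme of rational maps. Second, where you invoke de~Franchis--Severi and ``boundedness uniform across the family'' somewhat informally, the paper makes the uniformity explicit via intersection theory: it slices each fibre $\cX_s$ down to a complete-intersection curve by Bertini, bounds that curve's genus by the GLP bound, and then uses the bound $\alpha(g)$ from Theorem~\ref{theorem_boundedness_gives_uniform_bound_for_curves} to control the leading coefficient of the Hilbert polynomial of the graph of $f$, hence confining $H$ to finitely many components of the Hilbert scheme. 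The original \cite{johnson2023curveexcluding} argument apparently uses Chow schemes instead (see the remark after Theorem~\ref{theorem_main_result_of_the_paper}), but the strategy is the same. Your proposal is correct and matches the paper's approach.
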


The theories $C\XF$ provide interesting examples of classes of non-large fields. Recall that a field $k$ is \emph{large} if for any smooth curve $D$ over $k$ with $D(k)\neq \varnothing$, the set $D(k)$ is Zariski dense in $D$. Largeness is a tameness notion introduced by Pop to study regular inverse Galois problems~\cite{pop-embedding}.

 It has long been believed that the concept of largeness holds significance in model theory of fields. Specifically, it is believed that largeness plays a crucial role in simplifying conjectures concerning model theory of fields. For instance, Koenigsmann provided a straightforward proof of the Podewski conjecture for large fields of arbitrary characteristic, a conjecture only known to be true in characteristic $p$ \cite{wagner-minimal-fields}.

The relationship between largeness and model-theoretic tameness was extensively investigated in~\cite{firstpaper} through the introduction of the \'etale open topology. As a notable application, the paper resolved the stable fields conjecture for large fields. Subsequently, these methodologies were extended to encompass large simple fields in~\cite{with-anand}. Further exploration of the \'etale open topology was conducted in~\cite{secondpaper,field-top-1,field-top-2}, revealing a uniform approach for studying the structure of definable sets within large fields. Notably, this approach encompasses essentially all previously known examples of model-theoretically tame fields.

In~\cite{johnson2023curveexcluding}, $C\XF$'s were introduced as the very first examples of non-large fields whose logical theories remain tame, answering questions by Koenigsmann and Macintyre concerning largeness, model-complete fields, and their Galois groups. Moreover, $C\XF$'s exhibit intriguing combinations of properties, notably:
\begin{enumerate}
    \item [(1)]  Model-theoretically, they are $\mathrm{NSOP}_4$ and $\mathrm{TP}_2$.
    \item [(2)]  Some models of $C\XF$ give rise to the first known examples of non-large, Hilbertian fields with a decidable theory.
    \item [(3)] The decidability of the partial theory $C\XF$ is equivalent to the effective Mordell conjecture for $C$.
\end{enumerate}  
The properties exhibited by $C\XF$'s suggest that the precise relationship between largeness and logical tameness is more subtle than initially anticipated, warranting further investigation and attention.

To delve deeper into the relationship between largeness and logical tameness, a promising way is to construct and examine new examples. For instance, in \cite[Section 8]{johnson2023curveexcluding}, the authors pose a question regarding the possible extension of their findings to higher-dimensional varieties $V$ (where $V(\KK_0) = \varnothing$) as opposed to a curve $C$. More precisely:
\begin{question}
    What conditions on a quasi-projective variety $V$ over $\KK_0$ ensure that the theory $T_V$ of fields extending $\KK$ with no points in $V$ has a model companion?
\end{question}
Equivalently, one can form it in the following way:
\begin{question}
    What conditions on a quasi-projective variety $V$ over $\KK_0$ ensure that axioms:
    \begin{itemize}
        \item[$(\mathrm{I}_V)$] if $l/\KK$ is any proper finite extension,
        then $V(l) \ne \varnothing$;
        \item[$(\mathrm{II}_V)$] for any geometrically integral
        variety $X/\KK$, either there is a non-constant rational map $f : X \dashrightarrow V_{\KK}$ over $\KK$ or $X(\KK)$ is Zariski dense in $X$;
    \end{itemize}
    are first-order conditions on a field $\KK$ extending $\KK_0$?
\end{question}

We say $V$ is \textit{excludable} if the above question has a positive answer and denote the resulting model companion by $V\XF$. Note that the axiom $(\mathrm{I}_V)$ is always first-order, see Remark~\ref{remark_first_order_I}. In this paper we provide many examples of higher dimensional excludable varieties and study properties of $V\XF$. More precisely, we find a generalization of the condition ``genus bigger or equal two" for a curve. This leads to the following theorem. 
\begin{main-theorem}[Theorem~\ref{theorem_main_result_of_the_paper}]~\label{theorem_intro_main}
    Let $\widetilde{V}$ be a geometrically integral, geometrically pure, projective variety over $\Kfield_0$. Let $V_0 \subset \widetilde{V}$ be a closed subscheme such that $\widetilde{V}$ is geometrically bounded modulo $V_0$. Let $V = \widetilde{V} \setminus V_0$ and assume that $V(\KK_0) = \varnothing$. Then $V$ is excludable.
\end{main-theorem}
In the case $V_0 = \varnothing$, the assumptions of this theorem are satisfied, if for example $\KK_0 = \QQ$ and $V(\CC)$ is Brody hyperbolic. Also, using \cite{complete_int_have_ample_cotangent} and \cite{Faltings} together with the above, one gets that a `random enough' subvariety of an abelian variety has an excludable open set.
\begin{corollary}[Corollary~\ref{corollary_random_subvarieties_are_excludable}]~\label{theorem_random_subvariety_of_abelian_isexcludable_introduction}
    Let $A$ be an abelian variety over $\KK_0$ with an ample line bundle $\cO_A(1)$. Let $\widetilde{V}$ be a sufficiently general intersection of at least half $\dim A$ many high enough powers of $\cO_A(1)$ defined over $\KK_0$. Then $\widetilde{V}(\KK_0)$ is not Zariski dense in $\widetilde{V}$ and if $V \subset \widetilde{V}$ is any open subscheme such that $V(\KK_0) = \varnothing$, then $V$ is excludable.
\end{corollary}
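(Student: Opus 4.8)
The plan is to check that $\widetilde V$ satisfies the geometric hypotheses of the Main Theorem (Theorem~\ref{theorem_main_result_of_the_paper}) and, separately, that $\widetilde V(\KK_0)$ is not Zariski dense, so that open subschemes $V$ with $V(\KK_0) = \varnothing$ both exist and are covered by that theorem. The two external inputs are Debarre's theorem \cite{complete_int_have_ample_cotangent} on ample cotangent bundles of complete intersections in abelian varieties and Faltings' theorem \cite{Faltings} on rational points of subvarieties of abelian varieties. Write $n = \dim A$ and let $c \ge n/2$ be the number of hypersurfaces, so that $\dim \widetilde V = n - c$; we may assume $\dim \widetilde V \ge 1$, the case $\dim \widetilde V = 0$ being trivial.

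First I would fix $\widetilde V$ to be a sufficiently general complete intersection of $c$ divisors in $|\cO_A(1)^{\otimes m}|$ for $m \gg 0$. By Bertini such $\widetilde V$ is smooth, hence geometrically pure, and geometrically integral (using $\dim \widetilde V \ge 1$), which gives the first two hypotheses of the Main Theorem. By \cite{complete_int_have_ample_cotangent}, for $c \ge n/2$ and $m$ large the general such $\widetilde V$ has ample cotangent bundle $\Omega^1_{\widetilde V}$. To obtain the boundedness hypothesis I would base change to $\CC$, where ampleness of $\Omega^1$ is preserved, and use that a smooth projective variety with ample cotangent bundle is Brody hyperbolic; combined with the fact standing behind the remark after the Main Theorem, namely that Brody hyperbolicity forces geometric boundedness, this shows $\widetilde V$ is geometrically bounded modulo $\varnothing$, a fortiori modulo any closed subscheme $V_0 \subseteq \widetilde V$.

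Next I would exploit that ample cotangent bundle rules out translated abelian subvarieties: if $x + B \subseteq \widetilde V$ with $B \subseteq A$ a positive-dimensional abelian subvariety, then the trivial bundle $\Omega^1_{x+B}$ would be a quotient of the ample bundle $\Omega^1_{\widetilde V}|_{x+B}$, contradicting the fact that quotients of ample vector bundles are ample. Thus $\widetilde V$ contains no translate of a positive-dimensional abelian subvariety, so Faltings' theorem \cite{Faltings} (for $\KK_0$ a number field, or its extension to fields finitely generated over $\QQ$) gives that $\widetilde V(\KK_0)$ is finite, in particular not Zariski dense. Taking $V_0$ to be the Zariski closure of $\widetilde V(\KK_0)$ then produces an open $V = \widetilde V \setminus V_0$ with $V(\KK_0) = \varnothing$, so such $V$ indeed exist.

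Finally, for any open $V \subseteq \widetilde V$ with $V(\KK_0) = \varnothing$, I would set $V_0 = \widetilde V \setminus V$ and apply the Main Theorem to the pair $(\widetilde V, V_0)$, concluding that $V$ is excludable. I expect the main obstacle to be the precise bridge between Debarre's conclusion and the paper's notion of geometric boundedness, namely verifying that ample cotangent bundle really lands in the hyperbolicity class for which boundedness was established, and that boundedness modulo $\varnothing$ is inherited modulo an arbitrary closed $V_0$, together with confirming that the arithmetic input of \cite{Faltings} is available over the field $\KK_0$ one works with.
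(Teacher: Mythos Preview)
Your approach matches the paper's: invoke Brotbek--Darondeau (not Debarre, who conjectured the result) for ample cotangent bundle, deduce the hyperbolicity needed for the Main Theorem, and use Faltings for the non-density of rational points. Two points deserve correction, though neither breaks the argument.

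First, ``smooth, hence geometrically pure'' is false as stated: $\PP^n$ is smooth but not pure. The correct justification is that $\widetilde V$ sits inside an abelian variety, and abelian varieties contain no rational curves (any map $\PP^1 \to A$ is constant since $\PP^1$ has trivial Albanese), so $\widetilde V$ is pure by the characterisation in Remark~\ref{remark_equivalent_conditions_on_boundedness_etc}.

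Second, your route to boundedness via a base change to $\CC$ and Brody hyperbolicity is an unnecessary detour. The paper instead uses Kobayashi's theorem directly: ample cotangent bundle implies \emph{algebraic} hyperbolicity over any algebraically closed field of characteristic zero, and algebraic hyperbolicity implies boundedness by the chain of implications in Remark~\ref{remark:weak_Lang_Vojta}. This dispenses with the bridge you flagged as the main obstacle. Your step that boundedness modulo $\varnothing$ implies boundedness modulo any closed $V_0$ is correct and immediate from the definition. Finally, the paper also applies Faltings in its general form (Mordellic modulo a proper closed subset, Theorem~\ref{theorem_faltings}) rather than first excluding abelian-subvariety translates to get outright finiteness; your extra step is valid but not needed for the stated conclusion that $\widetilde V(\KK_0)$ fails to be Zariski dense.
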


Let us elaborate on the conditions on the pair $V_0 \subset \widetilde{V}$ appearing in the statement of Theorem~\ref{theorem_intro_main}.
Fix a field $\KK$ and a pair of varieties $X_0 \subset X$ over $\KK$ with $X$ projective and $X_0$ a closed subscheme. We say that:
\begin{itemize}
    \item $X$ is pure, if for every smooth variety $T$ over $\KK$ are rational map $T \dashrightarrow X$ extends to a regular map to $X$. 
    \item $X$ is bounded modulo $X_0$, if for every normal projective variety $Y$ over $\KK$, the scheme $\un{\Hom}_{\KK}(Y,X) \setminus \un{\Hom}_{\KK}(Y,X_0)$ is of finite type over $\KK$.
    \item $X$ is indecomposable (over $\KK$), if for any integral, smooth, projective varieties $X_1, X_2$ over $\KK$, any map $X_1 \times X_2 \to X$ factors through $X_1$ or $X_2$.
\end{itemize}
In the case where $\KK$ is not algebraically closed, we add ``geometrically" to any of these properties, if it holds after base-changing to the algebraic closure. For details on pureness, boundedness modulo a subvariety and other notions of hyperbolicity, see Section~\ref{sec:rigidity} and \cite{vanbommel2019boundedness}, \cite{javanpeykar2020langvojta}. The notion of indecomposability is introduced to capture the property of all non-constant maps to $X$ being rigid. We need it to study properties of $V\XF$ for many examples of $V$, in particular we make use out of the following proposition.
\begin{proposition}[Proposition~\ref{proposition_geometric_indecomposability}]
    Let $X$ be a projective variety over $\Kfield_0$. If $X$ is geometrically indecomposable, then $X_l$ is indecomposable over $l$ for any field extension $\Kfield_0 \subset l$. Moreover, $X$ is geometrically indecomposable, if and only if $X_l$ is.
\end{proposition}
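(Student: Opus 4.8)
The plan is to reduce the whole proposition to a single base-change invariance property for algebraically closed fields and then establish that property by spreading out. Concretely, I would isolate the claim that for any inclusion of algebraically closed fields $\Omega_1 \subseteq \Omega_2$ the variety $X_{\Omega_1}$ is indecomposable over $\Omega_1$ if and only if $X_{\Omega_2}$ is indecomposable over $\Omega_2$. Granting this, both assertions follow quickly. Fix an embedding $\overline{\Kfield_0} \subseteq \overline{l}$ extending $\Kfield_0 \subseteq l$. The ``moreover'' is then immediate, since geometric indecomposability of $X$ means indecomposability of $X_{\overline{\Kfield_0}}$, which by the claim is equivalent to indecomposability of $X_{\overline{l}} = (X_l)_{\overline{l}}$, i.e. to geometric indecomposability of $X_l$. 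For the first assertion the claim already gives that $X_{\overline{l}}$ is indecomposable, so it remains to descend indecomposability from $\overline{l}$ to $l$. To do this I would record that a morphism $f\colon X_1 \times X_2 \to X_l$ factors through the projection to $X_1$ precisely when the two morphisms $X_1 \times X_2 \times X_2 \rightrightarrows X_1 \times X_2 \xrightarrow{f} X_l$ obtained by remembering the first, respectively the second, copy of $X_2$ coincide; as the equality of two morphisms descends along the faithfully flat extension $\overline{l}/l$, and as the base changes $(X_i)_{\overline{l}}$ remain integral because the test varieties are geometrically integral, factoring of $f_{\overline{l}}$ forces factoring of $f$. This yields indecomposability of $X_l$ over $l$.

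The easy half of the claim is that decomposability is preserved under enlarging the algebraically closed base field. If $f$ witnesses decomposability of $X_{\Omega_1}$, then in the equalizer description above the two comparison morphisms are distinct over $\Omega_1$, and they stay distinct after the faithfully flat base change to $\Omega_2$; since the base-changed test varieties remain integral over $\Omega_2$, the morphism $f_{\Omega_2}$ still factors through neither projection, so $X_{\Omega_2}$ is decomposable.

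The substance of the argument, and the step I expect to be the main obstacle, is the reverse implication: a decomposition over the larger field must be pushed down to the smaller one. Suppose $f\colon X_1 \times X_2 \to X_{\Omega_2}$ factors through neither projection. All of this data is defined over a finitely generated $\Omega_1$-subalgebra $R \subseteq \Omega_2$, so I would spread it out to a family $F\colon \mathcal{X}_1 \times_R \mathcal{X}_2 \to X_R$; after inverting finitely many elements of $R$ I may assume that $\mathcal{X}_1, \mathcal{X}_2 \to \Spec R$ are smooth and projective with geometrically integral fibers. The key point is that the locus in $\Spec R$ over which the fiberwise map factors through a fixed projection is closed: writing $E$ for the (closed) equalizer in $\mathcal{X}_1 \times_R \mathcal{X}_2 \times_R \mathcal{X}_2$ of the two comparison morphisms into the separated $X_R$, and $W$ for the ambient space, the fiber $F_s$ factors through the projection exactly when $E_s = W_s$; since $W \to \Spec R$ is proper with geometrically integral fibers of constant dimension, upper semicontinuity of fiber dimension applied to $E \to \Spec R$ shows that this condition is closed. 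As it fails at the generic point of $\Spec R$ for both projections, after one further shrinking I may assume $F_s$ factors through neither projection for every $s$. Finally $R$ is a finitely generated domain over the algebraically closed field $\Omega_1$, so $\Spec R$ has an $\Omega_1$-point $s$; its fiber provides integral smooth projective varieties $(X_1)_s,(X_2)_s$ over $\Omega_1$ together with a morphism $(X_1)_s \times (X_2)_s \to X_{\Omega_1}$ factoring through neither projection, contradicting indecomposability of $X_{\Omega_1}$. The delicate part throughout is ensuring that the non-factoring witness genuinely survives both the shrinking and the specialization to a rational fiber; this is exactly what the equalizer-plus-semicontinuity bookkeeping is designed to guarantee.
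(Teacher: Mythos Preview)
Your argument is correct and proceeds along a genuinely different route from the paper. The paper packages the problem via Hom schemes: by Lemma~\ref{lemma_indecomposability_and_discreteness_of_hom_schemes} indecomposability of $X_{\overline{\Kfield_0}}$ is equivalent to discreteness of $\un{\Hom}^{\nc}_{\overline{\Kfield_0}}(X_2,X_{\overline{\Kfield_0}})$ for every test variety $X_2$; placing all such $X_2$ into the universal families $\cX_i \to S_i$ of Proposition~\ref{proposition_0_def_family_of_flat_morphisms}, geometric indecomposability becomes the statement that each relative Hom scheme $H \to S$ is locally quasi-finite, and this property is stable under arbitrary base change, giving indecomposability over any $l$ in one stroke (no separate descent from $\overline{l}$ to $l$ is needed, since the implication $(1)\Rightarrow(2)$ in Lemma~\ref{lemma_indecomposability_and_discreteness_of_hom_schemes} does not require the base field to be algebraically closed). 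Your approach instead avoids Hom schemes entirely: you spread out a putative decomposition over a finitely generated $\Omega_1$-algebra and specialize to a closed point, using the equalizer description and semicontinuity of fibre dimension to certify that non-factoring persists. This is more elementary and self-contained, at the cost of reproving by hand what the Hom-scheme formalism encodes automatically; the paper's version, in turn, dovetails with the Hom-scheme machinery already set up for the main theorem. One small point worth making explicit in your write-up: the ``precisely when'' in your equalizer characterization uses that $p_1\colon X_1\times X_2 \to X_1$ is an effective epimorphism (it is fppf since $X_2$ is nonempty over a field), so that any morphism coequalizing its kernel pair factors through $X_1$; this is standard descent but is doing real work in the step where you pass from coincidence of the two comparison maps to an actual factorization over $l$.
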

It allows us to prove Lemma~\ref{lemma_amalgamation_of_models_of_VXF_forall} which in turn proves the proposition below. Let us highlight that once $V\XF$ is known to exist as a
first-order theory, its model-theoretic analysis more or less follows from \cite{johnson2023curveexcluding}. The real work of this paper in comparison to \cite{johnson2023curveexcluding} lies in (i)
identifying the analogue of genus $>1$ in higher dimensions, and (most importantly)
(ii) proving that the model-companion $V\XF$ exists under this assumption.
\begin{proposition}[Proposition~\ref{theorem_properties_of_VXF}]~\label{theorem_properties_of_VXF_intro}
    Let $V$ be a quasi-projective excludable variety over $\Kfield_0$. Assume that $V$ has a projective compactification $\widetilde{V}$ which is geometrically integral and geometrically indecomposable. Then the theory $V\XF$ has the following properties.
    \begin{enumerate}
        \item Let $l_1, l_2 \models V\XF$. Then $l_1 \equiv l_2$ if and only if $\Abs_{\KK_0}(l_1) \simeq \Abs_{\KK_0}(l_2)$ as $\KK_0$-fields. Moreover, for any algebraic extension $\KK_0 \subset \KK$ avoiding $V$, there is a model $l \models V\XF$ such that $\Abs_{\KK_0}(l) = \KK$.
        \item $V\XF$ has quantifier elimination if we expand by the predicates
        \begin{equation*}
            \mathrm{Sol}_n(x_0,...,x_n)\leftrightarrow \exists y \, x_0+yx_1+...+y^nx_n=0.    
        \end{equation*}  
        \item Model-theoretic and field-theoretic (relative) algebraic closure coincide in models of $V\XF$. In particular, models of $V\XF$ are algebraically bounded over $k_0$.
        \item $V\XF$ is a geometric theory, i.e., $\acl$ satisfies the exchange property and $\exists^\infty$ is uniformly eliminated.
         \item Every proper finite extension of models of $V\XF$ is a Hilbertian PAC field.
         \item Models of $V\XF$ are Hilbertian. Moreover, the absolute Galois group is $\omega$-free.
         \item In the axiomatization of $V\XF$ one can in fact skip the axiom $(\mathrm{I}_V)$. In other words, fields $\KK$ that avoid $V$ and satisfy $(\mathrm{II}_V)$ are models of $V\XF$.
         \item Model-theoretically, $V\XF$ is NSOP$_4$ and TP$_2$.
    \end{enumerate}
\end{proposition}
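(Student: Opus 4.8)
The plan is to treat the existence of $V\XF$ as a first-order theory as already granted (the Main Theorem) and to deduce (1)--(8) from a single amalgamation statement for models, exactly as in the curve case of \cite{johnson2023curveexcluding}. The one place where the higher-dimensional setting genuinely differs is in securing that amalgamation: here the hypothesis that $\widetilde{V}$ is geometrically integral and geometrically indecomposable plays the role that ``genus $\geq 2$'' played for curves. So the first thing I would do is isolate and prove Lemma~\ref{lemma_amalgamation_of_models_of_VXF_forall}, asserting that two models $l_1,l_2 \models V\XF$ sharing a common subfield over which they are free can be amalgamated inside a third model of $V\XF$, with control over which new rational maps to $V$ can appear over the compositum.

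The crux of that lemma is that a geometrically integral variety over the (separably generated) compositum spreads out, over $\overline{\KK}$, into a family dominated by a product $X_1 \times X_2$ with $X_i$ defined over $l_i$; any rational map from such a product to $\widetilde{V}_{\overline{\KK}}$ factors through a single factor by geometric indecomposability (Proposition~\ref{proposition_geometric_indecomposability}). This forces the obstruction in $(\mathrm{II}_V)$ to be already visible in one of the two models, so $(\mathrm{II}_V)$ is preserved under amalgamation and the compositum embeds into an existentially closed model. Granting this, (1) follows by a back-and-forth in which the invariant distinguishing models is precisely the relatively algebraically closed absolute part $\Abs_{\KK_0}$: the amalgamation lemma lets one extend partial isomorphisms matching absolute parts, and the realization statement comes from building an e.c.\ model on top of a prescribed algebraic extension while preserving its absolute subfield. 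Property (2) is the syntactic shadow of the same back-and-forth, since the predicates $\mathrm{Sol}_n$ pin down the isomorphism type of the relative algebraic closure (they record solvability of one-variable polynomials, hence the action on $\Abs_{\KK_0}$); QE then follows by checking that isomorphisms of substructures in the expanded language lift through the amalgamation.

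Properties (3)--(7) are field-arithmetic consequences that transfer essentially verbatim from \cite{johnson2023curveexcluding}. For (3), QE together with the geometric axiom $(\mathrm{II}_V)$ shows that model-theoretic $\acl$ agrees with relative field-theoretic algebraic closure, whence algebraic boundedness over $\KK_0$; (4) follows because exchange is then inherited from field theory and uniform elimination of $\exists^\infty$ is read off from the definable families controlled by QE. For the PAC statement (5) and for dropping $(\mathrm{I}_V)$ in (7), the tool is Weil restriction: given a proper finite extension $l/\KK$ and a geometrically integral $X/l$, the restriction $\Res_{l/\KK} X$ is geometrically integral over $\KK$, and applying $(\mathrm{II}_V)$ to it yields either density of $\Res_{l/\KK}X(\KK) = X(l)$ --- which is PAC-ness --- or a nonconstant rational map to $V$; the latter, over $\overline{\KK}$, factors through one conjugate by indecomposability, a case one rules out or converts into a $V(l)$-point exactly as for curves, and the same mechanism gives $(\mathrm{II}_V)\Rightarrow(\mathrm{I}_V)$. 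Finally, in (6), Hilbertianity follows from $(\mathrm{II}_V)$ (thin sets cannot exhaust $X(\KK)$ unless $X$ maps to $V$), and $\omega$-freeness of the absolute Galois group follows from Hilbertianity together with PAC-ness of finite extensions via the standard field-arithmetic results invoked in \cite{johnson2023curveexcluding}.

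For (8), $\NSOP_4$ follows from the amalgamation of models established above together with the combinatorial criterion used in \cite{johnson2023curveexcluding}, while $\mathrm{TP}_2$ is exhibited by an explicit indiscernible array, again as there. The main obstacle throughout is the amalgamation step of the second paragraph: one must verify that forming the compositum of two models creates no rational map to $V$ that neither model already sees, and this is precisely where geometric indecomposability is indispensable. Without it, a map from a product $X_1 \times X_2 \to \widetilde{V}$ need not factor through a factor, the preservation of $(\mathrm{II}_V)$ under amalgamation would fail, and with it the entire back-and-forth underlying (1), (2) and (8). Everything else is bookkeeping layered on top of the curve-case arguments.
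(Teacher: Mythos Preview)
Your proposal is correct and follows the same route as the paper: establish the amalgamation lemma (Lemma~\ref{lemma_amalgamation_of_models_of_VXF_forall}) via geometric indecomposability of $\widetilde{V}$, derive partial elementarity over a common relatively algebraically closed subfield (Lemma~\ref{lemma_partial_elem}), and then transplant the arguments of \cite{johnson2023curveexcluding} item by item with these substituted for their curve-case analogues. One small clarification: the amalgamation is at the level of $V\XF_\forall$ (the compositum of two regular extensions avoiding $V$ still \emph{avoids} $V$, rather than satisfying $(\mathrm{II}_V)$), after which one embeds into an existentially closed model---your phrase ``$(\mathrm{II}_V)$ is preserved under amalgamation'' slightly overstates what is actually shown, though the downstream use is identical.
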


It is worth mentioning that in the context of Theorem~\ref{theorem_random_subvariety_of_abelian_isexcludable_introduction} the assumptions of Theorem~\ref{theorem_properties_of_VXF_intro} are satisfied because of the Brotbek and Darondeau theorem \cite{complete_int_have_ample_cotangent}. 

Before we conclude the introduction, let us briefly discuss the relationship between excludability and other notions of hyperbolicity. In the main theorem, we prove that boundedness implies excludability. There is a weaker property than boundedness, called grouplessness that is conjectured to be equivalent to boundedness (for details, see Remark~\ref{remark:weak_Lang_Vojta}). If excludability implies grouplessness, it would give a new intermediate property in the list of conjecturally equivalent hyperbolicity properties of a variety \cite[Conjecture 12.2]{javanpeykar2020langvojta}. Thus, we ask the following.

\begin{question}
    Is there a geometrically integral, smooth projective variety $V$ over $k_0$ (with no rational points in $k_0$) that is excludable, but not groupless?
\end{question}

The article is organized as follows. In Section~\ref{sec:prelim} we present some definitions from logic in algebro-geometric flavour. In Sections~\ref{sec:inter} and \ref{sec:hilb} we survey some results from intersection theory and the theory of Hilbert schemes respectively. In the latter section we prove Proposition~\ref{proposition_geometric_indecomposability}. In Section~\ref{sec:rigidity} we describe relations between various notions of hyperbolicity/rigidity in algebraic geometry. In Sections~\ref{sec:model_comp} and \ref{sec:indecomp} we prove the main theorems of the paper. At last, in Section~\ref{sec:rem_and_quest}, we provide an example in characteristic $p$, of a curve $C$ such that $T_C$ does not admit a model companion. This is the content of Proposition~\ref{proposition_counterexample}. We also pose some questions about possible further directions of studies.

\begin{acknowledgment}
We would like to thank Ehud Hrushovski, Ariyan Javanpeykar, and Will Johnson for their helpful remarks. We would also like to thank Piotr Achinger, George Cooper, Joachim Jelisiejew, Adrian Langer, Benedikt Stock, and Jakub Wiaterek for some discussions and references. Lastly, we would like to thank the anonymous referees for their suggestions on improving the paper.
\end{acknowledgment}

\section{Existential closedness and model companion}\label{sec:prelim}
In this section we present some definitions coming from logic in an algebro-geometric flavour. The results follow easily by generalising \cite[Section 1]{johnson2023curveexcluding}, however, we decided to include them here for the convenience of the reader. Fix a characteristic zero field $\KK_0$. We work with the language consisting of plus, minus, multiplication, and constants from $\KK_0$. Let $V$ be a quasi-projective variety over $\KK_0$. 
\begin{definition}
    Let $T_V$ be a theory in the above-introduced language such that models of $T_V$ are fields $\KK$ containing $\KK_0$ with $V(\KK) = \varnothing$. We denote it by $\KK \models T_V$.
\end{definition}
This is a first-order theory, which means that it can be written as a set of axioms being first-order sentences. Recall that first-order means that one can quantify over elements, but not over subsets. For example, if $V \subset \PP^N$ is the intersection of the zero set of homogeneous polynomials $f_1, \dots, f_n$ and the non-vanishing locus of homogeneous polynomials $g_1, \dots, g_m$, then $T_V$ can be given by the following axioms.
\begin{itemize}
    \item Axioms for relations in $\KK_0$: if for example $a+b=c$ in $\KK_0$, then we put here the corresponding relation among constants from $\KK_0$.
    \item Field axioms: for example $(\forall x)(x+0=0+x=x)$ or $(\forall x)(x \neq 0 \to (\exists y)(x \cdot y = 1))$.
    \item Excluding $V$ axiom: $\neg (\exists x=(x_0, \dots, x_N))(x \neq 0 \wedge \bigwedge_{i=1}^n f_i(x)=0 \wedge \bigvee_{j=1}^m g_j(x) \neq 0)$.
\end{itemize}
Note that this theory is consistent (i.e., has a model) if and only if $V(\KK_0)$ is already empty. Moreover, it is clear that this theory only depends on the reduction of $V$. We now present the definition of existential closedness in an algebro-geometric fashion. 
\begin{definition}
If $\KK$ is a model of $T_V$, then $\KK$ is called \textit{existentially closed} if for every quasi-projective variety $X$ over $\KK$, if there exists an extension $\KK \subset l$ within models of $T_V$ such that $X(l) \neq \varnothing$ (i.e., $X$ has a $\Spec(l)$-point over $\Spec(\KK)$), then $X(\KK)\neq \varnothing$.
\end{definition}
\begin{remark}
    If an embedding $\KK \subset l$ of fields satisfies the condition from the above definition, for every quasi-projective variety $X$ over $\KK$, then we say that $\KK$ is existentially closed in $l$.
\end{remark}

\begin{example}~\label{example_empty_variety_model_companion}
    If $V$ is the empty variety, then by Nullstellensatz, a field is existentially closed if and only if it it algebraically closed. Thus, one can think of existentially closed models of $T_V$ as models of $T_V$ that are as close to algebraically closed fields as the condition of not having points in $V$ allows.
\end{example}
If $T_V$ is consistent, existentially closed models exist, but the question is when do they also form an elementary class.
\begin{definition}
    We say that $T_V$ has a \textit{model companion} if the class of existentially closed models of $T_V$ is \textit{elementary}, i.e., if there exists a set $T$ of first-order axioms such that existentially closed models of $T_V$ are exactly the models of $T_V$ satisfying the axioms from $T$. Note that if such $T$ exists, it is unique up to logical consequences.
\end{definition}
\begin{example}
    If $V$ is the empty variety, then as mentioned in Example~\ref{example_empty_variety_model_companion}, existentially closed models of $T_V$ are just algebraically closed fields and those can be axiomatized by the following countable list of axioms:
    \[ \{(\forall y_0, \dots, y_{n-1})(\exists x)(x^n + y_{n-1}x^{n-1} + \dots + y_1 x + y_0 =0) : n \in \mathbb{N}\}. \]
\end{example}
\begin{definition}
    Let $V$ be a quasi-projective variety over $\KK_0$. We call it \textit{excludable} if the theory $T_V$ has a model companion. Then we denote by $V\XF$ the theory axiomatizing existentially closed models of $T_V$.
\end{definition}

Assume from now on that $V$ is a geometrically integral positive-dimensional variety with $V(\KK_0)=\varnothing$. In the rest of this section we focus on giving conditions for being an existentially closed model of $T_V$. 

\begin{lemma}~\label{lemma_it_is_enough_to_check_ec_on_finitely_generated}
    If $\KK$ is existentially closed in all finitely generated extensions $\KK \subset l$ within $T_V$, then $\KK$ is an existentially closed model of $T_V$.
\end{lemma}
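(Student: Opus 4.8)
The plan is to show that existential closedness, which a priori quantifies over \emph{all} extensions within $T_V$, can be tested on finitely generated ones, by descending any witnessing point to a finitely generated subfield.

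First I would record the elementary but crucial closure property of $T_V$: if $\KK \subseteq l \models T_V$ and $l_0$ is any intermediate field with $\KK \subseteq l_0 \subseteq l$, then $V(l_0) \subseteq V(l) = \varnothing$, so $l_0 \models T_V$ as well. In other words, every subextension of a model of $T_V$ (it contains $\KK$, hence $\KK_0$) is again a model of $T_V$. This is precisely what lets us shrink a witnessing field without leaving the theory.

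Next, let $X$ be a quasi-projective variety over $\KK$ and suppose $\KK \subseteq l \models T_V$ is an extension with $X(l) \neq \varnothing$; I must produce a point in $X(\KK)$. A point of $X(l)$ is a morphism $\Spec l \to X$ over $\KK$. Let $x \in X$ be its scheme-theoretic image. Since $X$ is of finite type over $\KK$, the residue field $\kappa(x)$ is a finitely generated field extension of $\KK$, and the morphism factors as $\Spec l \to \Spec \kappa(x) \to X$. Identifying $\kappa(x)$ with its image $l_0 := \kappa(x) \subseteq l$, we obtain a finitely generated extension $\KK \subseteq l_0$ together with an $l_0$-point of $X$, so $X(l_0) \neq \varnothing$; by the first step $l_0 \models T_V$.

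Finally I would invoke the hypothesis. Since $l_0$ is a finitely generated extension of $\KK$ within $T_V$, the field $\KK$ is existentially closed in $l_0$; as $X(l_0) \neq \varnothing$, this forces $X(\KK) \neq \varnothing$. Because $X$ and $l$ were arbitrary, $\KK$ is an existentially closed model of $T_V$. The only point requiring genuine care is the descent of the $l$-point to a finitely generated subfield, i.e.\ the observation that the scheme-theoretic image has a residue field finitely generated over $\KK$; everything else is formal.
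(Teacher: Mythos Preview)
Your proof is correct and follows the same idea as the paper. The paper's own proof is a single sentence—``This follows from the fact that a subfield of a model of $T_V$ (containing $\KK_0$) is a model of $T_V$''—so you have simply spelled out the descent to a finitely generated subextension that the paper leaves implicit.
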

\begin{proof}
    This follows from the fact that a subfield of a model of $T_V$ (containing $\KK_0$) is a model of $T_V$.
\end{proof}

\begin{lemma}~\label{lemma_finite_ext_add_points}
    If $\KK$ is an existentially closed model of $T_V$ and $\KK \subset l$ is a proper finite extension of fields, then $V(l) \neq \varnothing$.
\end{lemma}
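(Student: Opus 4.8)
The plan is to argue by contradiction, using the point-witnessing strategy that is standard for existentially closed structures. Suppose $V(l) = \varnothing$. Since $\KK_0 \subseteq \KK \subseteq l$, this hypothesis makes $l$ itself a model of $T_V$ extending $\KK$. The goal is then to exhibit a single quasi-projective variety $X$ over $\KK$ which acquires a point over $l$ but has none over $\KK$, directly contradicting the existential closedness of $\KK$.

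First I would produce a witnessing algebraic element: since $\KK \subset l$ is a proper finite extension of fields, choose any $\alpha \in l \setminus \KK$ and let $f \in \KK[x]$ be its minimal polynomial over $\KK$, which is irreducible of degree $d = [\KK(\alpha):\KK] \geq 2$. Set $X = \Spec(\KK[x]/(f))$. This is a zero-dimensional affine, hence quasi-projective, variety over $\KK$; it is integral because $f$ is irreducible, so that $\KK[x]/(f) \cong \KK(\alpha)$ is a field.

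Next I would identify the relevant point sets functorially. For any field extension $\KK \subseteq m$ one has $X(m) = \Hom_{\KK\text{-alg}}(\KK(\alpha), m)$, which is in bijection with the set of roots of $f$ in $m$. Over $l$ this set is nonempty, as $\alpha$ is itself such a root, so $X(l) \neq \varnothing$. Over $\KK$ it is empty, since an irreducible polynomial of degree $\geq 2$ has no root in its base field, so $X(\KK) = \varnothing$.

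Finally, applying the definition of existential closedness to the variety $X$, with the model $l \models T_V$ serving as the witnessing extension $\KK \subseteq l$, forces $X(\KK) \neq \varnothing$, contradicting the preceding computation. Hence the assumption $V(l) = \varnothing$ was untenable, and $V(l) \neq \varnothing$. I do not expect a serious obstacle here: the argument is a one-step application of existential closedness, and the only points demanding care are verifying that $\Spec(\KK(\alpha))$ genuinely qualifies as a quasi-projective variety over $\KK$ and that $l$ is a legitimate model of $T_V$ under the contradiction hypothesis.
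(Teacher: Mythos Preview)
Your proof is correct and follows exactly the same strategy as the paper's: assume $V(l)=\varnothing$ so that $l\models T_V$, pick $\alpha\in l\setminus\KK$ with minimal polynomial $f\in\KK[x]$, and use existential closedness on the variety cut out by $f$ to force a root of $f$ in $\KK$, a contradiction. The only difference is that you spell out the scheme $X=\Spec(\KK[x]/(f))$ and its point functor explicitly, whereas the paper simply says ``the equation $P(x)=0$ has a solution in $\KK$''; these are the same argument.
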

\begin{proof}
    If $V(l) = \varnothing$, then $l \models T_V$. Let $a \in l \setminus \KK$ be an element with a minimal polynomial $P(x) \in \KK[x]$. From the definition of existentially closedness, we then get that the equation $P(x)=0$ has a solution in $\KK$, which gives us a contradiction.
\end{proof}
The next lemma is well-known, but we include it for the sake of completeness.
\begin{lemma}~\label{lemma_characterisation_of_geometriacally_irreducible}
    Let $X$ be an integral variety over $\KK$. Then $X$ is geometrically integral if and only if $\KK \subset \KK(X)$ is regular, i.e., there is no proper finite extension $\KK \subset l$ with $l$ contained in $\KK(X)$.
\end{lemma}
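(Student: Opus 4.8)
The plan is to reduce the geometric condition on $X$ to a purely field-theoretic statement about the extension $\KK \subset K := \KK(X)$, and then invoke the standard theory of regular field extensions, which simplifies in characteristic zero to the form used here. The first step is to observe that $X$ is geometrically integral if and only if the ring $K \otimes_\KK \overline{\KK}$ is an integral domain. Indeed, every generic point of $X_{\overline\KK}$ maps to the generic point $\eta$ of $X$, so the generic structure of $X_{\overline\KK}$ is governed by the fibre $\Spec(\cO_{X,\eta} \otimes_\KK \overline\KK) = \Spec(K \otimes_\KK \overline\KK)$: irreducibility of $X_{\overline\KK}$ amounts to this ring having a unique minimal prime, and reducedness (automatic in characteristic zero) to it being reduced, so geometric integrality of $X$ is equivalent to $K \otimes_\KK \overline\KK$ being a domain. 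One may alternatively quote the corresponding statements from the Stacks Project.

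Next I would prove the forward direction by contraposition: suppose $\KK$ is not relatively algebraically closed in $K$, i.e.\ there is a proper finite extension $\KK \subsetneq l \subseteq K$. Since $\mathrm{char}\,\KK = 0$, the extension $l/\KK$ is separable, so $l \cong \KK[t]/(f)$ with $f$ separable of degree $\geq 2$, and hence $l \otimes_\KK \overline\KK \cong \overline\KK^{[l:\KK]}$ carries nontrivial idempotents. Because $\overline\KK$ is flat over $\KK$, the inclusion $l \hookrightarrow K$ induces an inclusion $l \otimes_\KK \overline\KK \hookrightarrow K \otimes_\KK \overline\KK$, so these idempotents persist and $K \otimes_\KK \overline\KK$ fails to be a domain. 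By the first step, $X$ is not geometrically integral.

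For the converse, suppose $\KK$ is relatively algebraically closed in $K$. In characteristic zero this says exactly that $K/\KK$ is a \emph{regular} field extension, separability being automatic. By the standard characterization of regular extensions, $K \otimes_\KK F$ is a domain for every field extension $F/\KK$; taking $F = \overline\KK$ shows $K \otimes_\KK \overline\KK$ is a domain, hence $X$ is geometrically integral by the first step. I would cite Bourbaki (\emph{Algebra}, Ch.\ V) or the Stacks Project for this characterization rather than reprove it.

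The only real subtlety, and the step I would be most careful about, is the first reduction: making precise that geometric integrality of $X$ is detected on the generic fibre $\Spec(K \otimes_\KK \overline\KK)$, and in particular that geometric reducedness is automatic in characteristic zero, so that no separability hypothesis on $K/\KK$ needs to be tracked. Everything else is elementary field theory.
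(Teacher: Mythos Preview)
Your proof is correct and follows essentially the same strategy as the paper: reduce geometric integrality of $X$ to a field-theoretic condition on $\KK \subset \KK(X)$ via Stacks Project lemmas, then use that in characteristic zero relative algebraic closedness coincides with regularity. The only cosmetic difference is that the paper tests against all finite extensions $\KK'/\KK$ and invokes the primitive element theorem to reduce to ``irreducible $P \in \KK[x]$ stays irreducible in $\KK(X)[x]$'', whereas you work directly over $\overline{\KK}$ and argue with idempotents and the standard characterization of regular extensions.
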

\begin{proof}
    Note that by \cite[Lemma 020I]{stacks-project} the scheme $X$ is geometrically integral if and only if it is geometrically irreducible, as we assume characteristic zero. By \cite[Lemma 054Q]{stacks-project} this is equivalent to the field $\KK(X)$ being geometrically irreducible over $\KK$. By \cite[Lemma 038I]{stacks-project} this is equivalent to $\KK(X) \otimes_{\KK} \KK'$ being irreducible for every finite extension $\KK'/\KK$. But by the primitive element theorem, every finite extension of $\KK$ is of the form $\KK[x]/(P)$ for some irreducible polynomial $P \in \KK[x]$. Thus the geometric integrability of $X$ is equivalent to the fact that for an irreducible $P \in \KK[x]$, it is still irreducible as an element of $\KK(X)[x]$. This is equivalent to the extension $\KK \subset \KK(X)$ being regular.
    
\end{proof}

\begin{remark}~\label{remark_rational_maps_from_integral}
    Let $X$ be an integral variety over $\KK$. Then the set of rational maps $X \dashrightarrow V_{\KK}$ over $\KK$ can be naturally identified with the set $V_{\KK}(\KK(X))$ (morphisms in $\Sch_{\KK}$). Under this identification $V_{\KK}(\KK)$ corresponds to constant maps (factoring through $\Spec(\KK))$ from $X$ to $V_{\KK}$.
\end{remark}

\begin{lemma}~\label{lemma_characterisation_of_being_existentially_closed_in_function_field}
    Let $X$ be an integral variety over $\KK$. Then the embedding $\KK \subset \KK(X)$ is existentially closed, if and only if $X(\KK)$ is Zariski dense in $X$.
\end{lemma}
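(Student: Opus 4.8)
The plan is to prove both implications by translating $\KK(X)$-points of a $\KK$-variety into rational maps out of $X$, extending the dictionary of Remark~\ref{remark_rational_maps_from_integral} from the target $V$ to an arbitrary quasi-projective target. Concretely, for any quasi-projective $Y/\KK$, a morphism $\Spec \KK(X) \to Y$ over $\KK$ has image a single (possibly non-closed) point, hence factors through an affine open $\Spec A \subseteq Y$; the resulting homomorphism $A \to \KK(X)$ sends the finitely many generators of $A$ into $\mathcal{O}_X(U)$ for some dense open $U \subseteq X$ (using $\KK(X) = \varinjlim_U \mathcal{O}_X(U)$ over dense opens $U$, valid since $X$ is integral), and this spreads the $\KK(X)$-point out to an honest morphism $f\colon U \to Y$ over $\KK$, i.e.\ a rational map $X \dashrightarrow Y$. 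Dually, the inclusion of the generic point $\Spec \KK(X) \hookrightarrow X$ is a $\KK(X)$-point of every nonempty open subvariety $U \subseteq X$, since $\KK(U) = \KK(X)$.

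For the direction that density implies existential closedness, I would suppose $X(\KK)$ is Zariski dense and let $Y/\KK$ be quasi-projective with $Y(\KK(X)) \neq \varnothing$. Spreading out as above produces a rational map $f\colon X \dashrightarrow Y$ defined on a dense open $U$. Because $X \setminus U$ is a proper closed subset and $X(\KK)$ is dense, we cannot have $X(\KK) \subseteq X \setminus U$, so some $\KK$-point $p$ lies in $U$; composing $p\colon \Spec \KK \to U$ with $f$ gives a point $f(p) \in Y(\KK)$, whence $Y(\KK) \neq \varnothing$. As $Y$ was arbitrary, this is exactly existential closedness of $\KK \subset \KK(X)$.

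For the converse I would argue by contraposition. If $X(\KK)$ is not Zariski dense, let $Z$ be its Zariski closure, a proper closed subscheme of $X$, and set $U = X \setminus Z$, a nonempty open subvariety of $X$. By construction every $\KK$-point of $X$ lies in $Z$, so $U(\KK) = \varnothing$; yet the generic point exhibits a $\KK(X)$-point of $U$, so $U(\KK(X)) \neq \varnothing$. Taking $Y = U$ as the test variety in the definition of existential closedness then shows that $\KK \subset \KK(X)$ is not existentially closed, completing the contrapositive.

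The only genuinely technical point is the spreading-out step of the first paragraph, namely realizing a $\KK(X)$-point as a morphism from a dense open of $X$; everything else is formal once one observes that a dense open of an integral variety whose rational points are dense must contain a rational point. I expect no serious obstacle here: the lemma is essentially the translation of the generic-point picture into the language of existential closedness, and the apparent asymmetry between the two directions (a single test variety suffices for the converse, whereas the forward implication must handle all $Y$ at once) is absorbed automatically by the spreading-out dictionary.
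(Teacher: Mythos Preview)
Your proposal is correct and follows essentially the same approach as the paper: both directions use the dictionary between $\KK(X)$-points of $Y$ and rational maps $X \dashrightarrow Y$ (your spreading-out step is precisely what the paper invokes via Remark~\ref{remark_rational_maps_from_integral}), and your contrapositive for the converse is just the paper's direct argument rephrased. The only differences are cosmetic: you supply more detail on spreading out and frame one direction contrapositively, but the content is identical.
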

\begin{proof}
    First, assume that $\KK \subset \KK(X)$ is existentially closed. If $U \subset X$ is a Zariski open non-empty subset, then $U(\KK(X)) \neq \varnothing$, so by existential closedness, we get that $U(\KK) \neq \varnothing$. Hence $X(\KK)$ is Zariski dense in $X$.

    Second, assume that $X(\KK)$ is Zariski dense in $X$. Let $Y$ be a quasi-projective variety over $\KK$ and assume that $Y(\KK(X))$ is non-empty. By Remark~\ref{remark_rational_maps_from_integral} we conclude that there is a rational map $f:X \dashrightarrow Y$ over $\KK$. Let $U \subset X$ be an open subset such that $f:U \to Y$ is a regular map. By assumption $U(\KK)$ is non-empty, so we get that $Y$ has a $\KK$-point, which finishes the proof.
\end{proof}

\begin{corollary}
    Let $\KK$ be a model of $T_V$. It is existentially closed if and only if the following conditions hold:
    \begin{itemize}
        \item[$(\mathrm{I}_V)$] If $l/\KK$ is any proper finite extension,
        then $V(l) \ne \varnothing$.
        \item[$(\mathrm{II}_V)$] For any geometrically integral
        variety $X/\KK$, either there is a non-constant rational map $f : X \dashrightarrow V_{\KK}$ over $\KK$ or $X(\KK)$ is Zariski dense in $X$.
    \end{itemize}
\end{corollary}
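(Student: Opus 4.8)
The plan is to prove the corollary by combining the lemmas already established, reducing the abstract condition of existential closedness to the two concrete axioms $(\mathrm{I}_V)$ and $(\mathrm{II}_V)$. By Lemma~\ref{lemma_it_is_enough_to_check_ec_on_finitely_generated}, it suffices to verify existential closedness against all finitely generated extensions $\KK \subset l$ within $T_V$. The key observation is that any such finitely generated field extension is either algebraic (hence finite, since finitely generated algebraic extensions are finite) or has positive transcendence degree, in which case it is the function field $\KK(X)$ of some geometrically integral variety $X/\KK$. This dichotomy is exactly what splits the analysis into the two axioms.

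First I would prove the forward direction. Assume $\KK$ is existentially closed. Axiom $(\mathrm{I}_V)$ is immediate from Lemma~\ref{lemma_finite_ext_add_points}: a proper finite extension $l/\KK$ with $V(l)=\varnothing$ would itself be a model of $T_V$ into which $\KK$ is existentially closed, forcing the minimal polynomial of a generator to have a root in $\KK$, a contradiction. For $(\mathrm{II}_V)$, take a geometrically integral variety $X/\KK$ and suppose there is no non-constant rational map $X \dashrightarrow V_{\KK}$ over $\KK$. By Lemma~\ref{lemma_characterisation_of_geometriacally_irreducible} the extension $\KK \subset \KK(X)$ is regular, so in particular $\KK$ is relatively algebraically closed in $\KK(X)$; I would use this together with Remark~\ref{remark_rational_maps_from_integral} to conclude that $V_{\KK}(\KK(X))$ consists only of constant maps, i.e.\ points of $V_{\KK}(\KK)$, which is empty since $\KK \models T_V$. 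Hence $V(\KK(X)) = \varnothing$, so $\KK(X) \models T_V$, and Lemma~\ref{lemma_characterisation_of_being_existentially_closed_in_function_field} gives that $X(\KK)$ is Zariski dense in $X$.

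For the converse, assume $(\mathrm{I}_V)$ and $(\mathrm{II}_V)$ hold and let $\KK \subset l$ be a finitely generated extension within $T_V$; by Lemma~\ref{lemma_it_is_enough_to_check_ec_on_finitely_generated} it is enough to show $\KK$ is existentially closed in $l$. If $l/\KK$ is finite and proper, then $(\mathrm{I}_V)$ gives $V(l) \neq \varnothing$, contradicting $l \models T_V$; so any finite extension inside $T_V$ is trivial. Otherwise $l$ has positive transcendence degree and we may write $l = \KK(X)$ for a geometrically integral $X/\KK$ (after spreading out the generators to a model). If there were a non-constant rational map $X \dashrightarrow V_{\KK}$, then by Remark~\ref{remark_rational_maps_from_integral} this is a non-constant point of $V_{\KK}(\KK(X)) = V(l)$, again contradicting $V(l)=\varnothing$; so the second alternative of $(\mathrm{II}_V)$ holds, i.e.\ $X(\KK)$ is Zariski dense, and Lemma~\ref{lemma_characterisation_of_being_existentially_closed_in_function_field} shows $\KK$ is existentially closed in $\KK(X) = l$.

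The main obstacle, and the only point requiring genuine care rather than bookkeeping, is the clean reduction of an arbitrary finitely generated extension to the function-field setting: one must verify that a positive-transcendence-degree extension within $T_V$ can be presented as $\KK(X)$ for $X$ \emph{geometrically integral} (so that Lemma~\ref{lemma_characterisation_of_geometriacally_irreducible} and the regularity of $\KK \subset \KK(X)$ apply), and that the absence of a $\KK$-point on $V$ together with regularity genuinely rules out \emph{all} maps to $V_{\KK}$, not merely the dominant ones. Here the characteristic-zero hypothesis is what guarantees geometric integrality coincides with geometric irreducibility and lets the primitive element argument of Lemma~\ref{lemma_characterisation_of_geometriacally_irreducible} go through. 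Everything else is a direct citation of the preceding lemmas, so I expect the proof to be short.
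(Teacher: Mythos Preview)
Your proposal is correct and follows essentially the same route as the paper: both directions cite the same chain of lemmas (Lemma~\ref{lemma_finite_ext_add_points}, Remark~\ref{remark_rational_maps_from_integral}, Lemma~\ref{lemma_characterisation_of_geometriacally_irreducible}, Lemma~\ref{lemma_characterisation_of_being_existentially_closed_in_function_field}, Lemma~\ref{lemma_it_is_enough_to_check_ec_on_finitely_generated}) in the same order. The only cosmetic difference is that in the converse the paper avoids your finite/transcendental case split by observing directly that $(\mathrm{I}_V)$ forces every finitely generated $F\models T_V$ to be a regular extension of $\KK$, which immediately yields the geometric integrality you flag as the ``main obstacle''; your invocation of regularity in the forward direction is harmless but unnecessary.
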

\begin{proof}
    Assume first that $\KK$ is existentially closed. Point $(\mathrm{I}_V)$ is Lemma~\ref{lemma_finite_ext_add_points}. Fix a geometrically integral variety $X/\KK$ and assume there is no non-constant rational map $f : X \dashrightarrow V_{\KK}$ over $\KK$. By Remark~\ref{remark_rational_maps_from_integral} we have $V_{\KK}(\KK(X)) = V_{\KK}(\KK) = \varnothing$. Thus $\KK(X)$ is a model of $T_V$. By Lemma~\ref{lemma_characterisation_of_being_existentially_closed_in_function_field} we are done.  
    
    On the other hand, assume that $\KK$ satisfies $(\mathrm{I}_V), (\mathrm{II}_V)$. By Lemma~\ref{lemma_it_is_enough_to_check_ec_on_finitely_generated} it is enough to check that $\KK$ is existentially closed in every finitely generated extension $\KK \subset F$ with $F \models T_V$. By $(\mathrm{I}_V)$ there is no proper finite extension $\KK \subset l$ with $l$ contained in $F$. Pick an integral variety $X$ over $\KK$ with $\KK(X) = F$ over $\KK$. By Lemma~\ref{lemma_characterisation_of_geometriacally_irreducible} we know that $X$ is geometrically integral and by Remark~\ref{remark_rational_maps_from_integral} there is no rational map $X \dashrightarrow V_{\KK}$ over $\KK$. Hence, $X(\KK)$ is Zariski dense in $X$ by $(\mathrm{II}_V)$. By Lemma~\ref{lemma_characterisation_of_being_existentially_closed_in_function_field} we are done.
\end{proof}

\begin{remark}~\label{remark_first_order_I}
    The condition $(\mathrm{I}_V)$ above is a first-order condition on a field $\KK$. Indeed, one can present it as an axiom schema of the form ``for all degree $n$ extensions $l/\KK$ there is a point in $V(l)$" for all natural $n$. These are first-order, because a degree $n$ extension of $l/\KK$ can be encoded by a tuple of elements $\alpha_{ijk} \in \KK$ such that if $\{e_i\}_{i=1, \dots, n}$ is a basis of $l$ over $\KK$, then $e_i \cdot e_j = \sum_k \alpha_{ijk} e_k$.

    The condition $(\mathrm{II}_V)$ is a priori not a first-order condition on $\KK$. The main result of this paper is that some hyperbolicity assumptions on $V$ actually make it first-order.
\end{remark}

\section{Intersection theory}~\label{sec:inter}
In this section we present some properties of the intersection product in algebraic geometry. We work over an algebraically closed field $\Kfield$ of characteristic zero. For a more thorough introduction to intersection theory we refer the reader to \cite{Fulton-intersection} and \cite[Chapter 21]{Ravi_Vakil_FOAG}.
\begin{definition}
    For an irreducible projective variety $X$ over $\Kfield$ we denote by $A_i(X)$ the $i$'th \textit{Chow group} of $X$ consisting of cycles of dimension $i$ (i.e., integer combinations of irreducible subvarieties of $X$ of dimension $i$) up to rational equivalence. In particular $A_{d-1}(X)$ is the group of Weil divisors on $X$ modulo rational equivalence, if $d=\dim X$. If $\alpha \in A_e(X)$ and $L_1, \dots, L_e$ are line bundles on $X$, then we can form an intersection number denoted by
    \[ \int_{\alpha} c_1(L_1) \cdot \ldots \cdot c_1(L_e) \in \ZZ. \]
    If $\alpha = [Z]$ is a class of a subvariety $Z \subset X$ we denote this also by
    \[ \int_{Z} c_1(L_1) \cdot \ldots \cdot c_1(L_e). \]
    These intersections numbers do not depend on the order of $L_i$'s and are multilinear with respect to the tensor product of line bundles.
\end{definition}

\begin{remark}
    We will need a few operations/properties from intersection theory. We describe them below.
    \begin{enumerate}
        \item Let $s_1, \ldots, s_p$ be global sections of $L_1, \ldots, L_p$ respectively, and assume that their zero sets restricted to $Z$ form a complete intersection $W$. Recall that this means that $s_i$ is not a zero divisor locally on $Z \cap \cdiv(s_1) \cap \ldots \cap \cdiv(s_{i-1})$ for $i \leq p$ and $W = Z \cap \bigcap_{i \leq p} \cdiv(s_i)$. Then
        \[ \int_{Z} c_1(L_1) \cdot \ldots \cdot c_1(L_e) = \int_{W} c_1(L_{p+1}) \cdot \ldots \cdot c_1(L_e). \]
        Moreover, if $p = e$, then $W$ is just a disjoint sum of copies of $\Spec(\Kfield)$ and the number of copies is exactly the intersection number.
        \item For a morphism $f:X \to Y$ between proper varieties, there is a family of \emph{pushforward} maps $f_* : A_e(X) \to A_e(Y)$ for all $e$. For an irreducible subvariety $V \subset X$ we have
        \[ f_*[V] = \deg(V/f(V))[f(V)], \]
        where by $[-]$ we denote the class of a subvariety in the Chow group, and $\deg(V/f(V))$ is the degree of the extension of the field of rational functions on $f(V)$ by the field of rational functions on $V$. If $\deg(V/f(V))$ is infinite, we put $f_*[V]=0$.
        \item For a morphism $f:X \to Y$ between schemes, if $L$ is a line bundle on $Y$, we can pull it back on $X$ and we denote the \emph{pullback} by $f^*L$. If moreover $X$ and $Y$ are proper irreducible varieties and $\alpha \in A_e(X)$, then the \emph{projection formula} holds:
        \[ \int_{\alpha} c_1(f^* L_1) \cdot \ldots \cdot c_1(f^* L_e) = \int_{f_* \alpha} c_1(L_1) \cdot \ldots \cdot c_1(L_e). \]
        \item The intersection number
        $\int_{Z} c_1(L_1) \cdot \ldots \cdot c_1(L_e)$
        can be defined in terms of Euler characteristics of tensor products of duals of $L_i$'s restricted to $Z$. In particular, if $\cX \to S$ is a morphism, $\cL_1, \ldots, \cL_e$ are line bundles on $\cX$ and we are given a subscheme $\cZ \subset \cX$ flat over $S$ (for the definition of flatness see Remark~\ref{remark_flatness}) with relative dimension $e$, then for any field $l$ and any $s \in S(l)$ the numbers \[ \int_{\cZ_s} c_1((\cL_1)_s) \cdot \ldots \cdot c_1((\cL_e)_s) \]
        are constant. This is because Euler characteristic is constant in flat families.
    \end{enumerate}
\end{remark}

\begin{remark}
    If $X$ is smooth, then one usually writes $A^i(X) := A_{d-i}(X)$ and $A^*(X) := \bigoplus_{i=0}^d A^i(X)$ has a structure of a graded ring. 
\end{remark}

Let $X$ be a projective variety over $\Kfield$. We present the definition of the Néron–Severi group of $X$.
\begin{definition}
    If $L$ is a line bundle on $X$ and $C$ is an $1$-cycle in $X$, then we define $L.C := \int_C c_1(L)$. See \cite[Chapter 21]{Ravi_Vakil_FOAG} for the definition in the non-smooth case. This operation gives a pairing
    \[ \Pic(X) \times A_1(X) \to \ZZ, \]
    where $\Pic(X)$ is the set of isomorphisms classes of line bundles on $X$. Two line bundles $L, M \in \Pic(X)$ are \textit{numerically equivalent} (denoted $L \equiv M$) if for all $\gamma \in A_1(X)$ we have $L.\gamma = M.\gamma$. Similarly, two $1$-cycles $\gamma, \delta \in A_1(X)$ are numerically equivalent (denoted $\gamma \equiv \delta$) if for all line bundles $L \in \Pic(X)$ we have $L.\gamma = L.\delta$. The \textit{Néron–Severi group} of $X$ is the group
    \[ \NS(X) := \Pic(X)/\equiv. \]
    It is a finitely generated abelian group and we use the notation $N^1(X) := \NS(X) \otimes_{\ZZ} \RR$. Moreover, we write $N_1(X) := (A_1(X)/\equiv) \otimes_{\ZZ} \RR$ and intersecting line bundles and $1$-cycles yields a perfect pairing
    \[ N^1(X) \times N_1(X) \to \RR. \]
\end{definition}
Below we introduce some important cones in $N^1(X), N_1(X)$.
\begin{definition}
    \begin{enumerate}
        \item The \textit{effective cone} of curves is the closed convex cone in $N_1(X)$ spanned by classes of irreducible curves $C \subset X$.
        \item The \textit{nef cone} is the closed convex cone in $N^1(X)$ consisting of classes $\alpha \in N^1(X)$ such that for all irreducible curves $C \subset X$ we have $\alpha.[C] \geq 0$. In other words, it is the dual cone to the effective cone of curves. 
        \item The \textit{ample cone} is the interior of the nef cone in $N^1(X)$. Equivalently, it is the open cone generated by the classes of ample line bundles. Recall that $L \in \Pic(X)$ is ample, if there exists a positive tensor power $L^{\otimes n}$ of $L$ which is very ample, i.e., such that the mapping
        \[ X \to \PP^N \]
        \[ x \mapsto [s_0(x):s_1(x):\dots:s_N(x)], \]
        defines a closed embedding, where $s_0, \dots, s_N$ is some basis of $H^0(X, L^{\otimes n})$.
    \end{enumerate}
\end{definition}

\begin{remark}~\label{Khovanskii_Teissier_inequalities}
    Assume that $d$ is the dimension of $X$ and let $L, M$ be nef line bundles on $X$. For $i=0, \dots, d$ define $s_i = \int_X c_1(L)^i c_1(M)^{d-i}$. Then the Khovanskii-Teissier inequalities assert that the sequence $s_i$ is log-concave, i.e.,
    \[ s_i^2 \geq s_{i-1} s_{i+1}, \]
    for $i=1, \dots, d-1$. For a proof, see \cite[Example 1.6.4]{Lazarsfeld_positivity_I}. Note that it follows from the projection formula, that a pullback of a nef line bundle is also nef, hence one can use these inequalities also in such context.
\end{remark}

\section{Hilbert schemes}~\label{sec:hilb}

In this section we introduce Hilbert schemes and mention a few results needed later in the text. See also \cite{fantechi2005fundamental} for a detailed introduction and \cite[Section 6]{martinpizarro2024noetheriantheories} for an introduction avoiding the language of schemes. Let $S$ be a scheme which will serve as our base-scheme. We denote by $\Kfield$ an arbitrary field. If $X$ is a projective variety over $\Kfield$, then the Hilbert scheme $\un{\Hilb}_{\Kfield}(X)$ is a countable sum of projective schemes over $\Kfield$ such that $\Kfield$-points of $\un{\Hilb}_{\Kfield}(X)$ correspond to closed subschemes $Z \subset X$. For a family of varieties $\cX \to S$ indexed by a variety $S$ one can put the Hilbert schemes of the fibers into one scheme over $S$. We introduce it below.

\begin{definition}
    Let $\cX \to S$ be a flat projective morphism with a relatively ample line bundle $\cO(1)$ on $\cX$. The \textit{Hilbert scheme} of $\cX$ over $S$ denoted by $\un{\Hilb}_S(\cX)$ is a scheme over $S$ which is characterised by the following property. For every $T \in \Sch_S$ morphisms from $T$ to $\un{\Hilb}_S(\cX)$ can be described as
    \[ \un{\Hilb}_S(\cX)(T) = \bigl\{ \cZ \subset \cX_T : \cZ \textnormal{ is a closed subscheme of } \cX_T \textnormal{ flat over } T \bigr\}. \]
    There exists a disjoint decomposition
    \[ \un{\Hilb}_S(\cX) = \bigcup_P \un{\Hilb}_S^P(\cX), \]
    where $P$ varies among polynomials in $\QQ[t]$, such that each $\un{\Hilb}_S^P(\cX)$ is a projective scheme over $S$. Moreover, for a field $\Kfield$ and $s \in S(\Kfield)$, the $\Kfield$-points of $\un{\Hilb}_S^P(\cX)$ over $s$ are in a natural bijection with
    \[ \bigl\{ Z \subset \cX_s : Z \textnormal{ is a closed subscheme of } \cX_s \textnormal{ with the Hilbert polynomial } P \bigr\}. \]
    We recall that the \textit{Hilbert polynomial} of a closed subscheme $Z \subset \cX_s$ (with respect to $\cO(1)$) is the unique polynomial $P \in \QQ[t]$ such that for natural numbers $m$ big enough we have
    \[ P(m) = \dim_{\Kfield} H^0(Z, \cO(m)|_Z), \]
    or equivalently $P(m) = \chi(\cX_s, \cI_Z(m))$ for all natural $m$, with $\cI_Z$ being the ideal sheaf of $Z$ in $\cX_s$ and $\chi$ being the Euler characteristic.
\end{definition}

\begin{remark}~\label{remark_flatness}
    For the definition of flatness we refer to \cite{stacks-project}. A projective morphism between varieties over $\Kfield$ is flat if and only if the Hilbert polynomials of all geometric fibers are the same.
\end{remark}

\begin{remark}~\label{remark_nice_properties_hilbert_schemes}
    Fix the notation from the above definition. We list here some useful properties of Hilbert schemes. The first three properties follow from the definition of functor defying Hilbert schemes, see e.g. \cite{fantechi2005fundamental} for details. The last property is \cite[Example 2.5.2]{Fulton-intersection} or \cite[Proposition 0BJ8]{stacks-project}.
    \begin{itemize}
        \item If $\cY$ is a closed subscheme of $\cX$ flat over $S$, then there is a natural closed immersion $\un{\Hilb}_S(\cY) \subset \un{\Hilb}_S(\cX)$.
        \item For $s \in S(\Kfield)$ the base-change of $\un{\Hilb}_S(\cX)$ via the map $s:\Spec(\Kfield) \to S$ is isomorphic to $\un{\Hilb}_{\Kfield}(\cX_s)$.
        \item The identity morphism $\un{\Hilb}_S(\cX) \to \un{\Hilb}_S(\cX)$ corresponds to a canonical closed subscheme $\cU \subset \cX \times_S \un{\Hilb}_S(\cX)$ flat over $\un{\Hilb}_S(\cX)$, which we refer to as the \textit{universal family} of $\un{\Hilb}_S(\cX)$. Consider $f:\Spec(\Kfield) \to \un{\Hilb}_S(\cX)$ corresponding to a closed subscheme $Z \subset \cX_s$ with $s \in S(\Kfield)$ being the composition of $f$ and the projection to $S$. If we form the Cartesian diagram
        % https://q.uiver.app/#q=WzAsNixbMCwyLCJcXFNwZWMoXFxLZmllbGQpIl0sWzEsMiwiXFx1bntcXEhpbGJ9X1MoXFxjWCkiXSxbMSwxLCJcXGNYIFxcdGltZXNfUyBcXHVue1xcSGlsYn1fUyhcXGNYKSJdLFswLDEsIlxcY1hfcyJdLFsxLDAsIlxcY1UiXSxbMCwwLCJcXGNVX3MiXSxbMywyXSxbMCwxXSxbMiwxXSxbMywwXSxbNCwyLCIiLDEseyJzdHlsZSI6eyJ0YWlsIjp7Im5hbWUiOiJob29rIiwic2lkZSI6ImJvdHRvbSJ9fX1dLFs1LDMsIiIsMSx7InN0eWxlIjp7InRhaWwiOnsibmFtZSI6Imhvb2siLCJzaWRlIjoiYm90dG9tIn19fV0sWzUsNF1d
        \[\begin{tikzcd}
        	{\cU_s} & \cU \\
        	{\cX_s} & {\cX \times_S \un{\Hilb}_S(\cX)} \\
        	{\Spec(\Kfield)} & {\un{\Hilb}_S(\cX)}
        	\arrow[from=2-1, to=2-2]
        	\arrow["f", from=3-1, to=3-2]
        	\arrow[from=2-2, to=3-2]
        	\arrow[from=2-1, to=3-1]
        	\arrow[hook', from=1-2, to=2-2]
        	\arrow[hook', from=1-1, to=2-1]
        	\arrow[from=1-1, to=1-2]
        \end{tikzcd}\]
        the subscheme $Z$ will be exactly the image of $\cU_s$ in $\cX_s$ (the ideal sheaves will be the same).
        \item Let $P$ be the Hilbert polynomial of $[Z] \in \un{\Hilb}_{\Kfield}(X)(\Kfield)$ for a projective variety $X$ over algebraically an closed field $\Kfield$ with respect to an ample line bundle $L$ on $X$. Then the degree of $P$ is $d := \dim Z$ and the leading coefficient of $P$ can be calculated using intersection theory as $\frac{1}{d!} \int_Z c_1(L)^{d}$.
    \end{itemize}
\end{remark}

\begin{remark}~\label{remark_only_fin_many_Hilbert_polynomials_standard}
    Not every polynomial $P \in \QQ[t]$ can be obtained as the Hilbert polynomial of some closed subscheme $X \subset \PP_{\Kfield}^n$ with respect to $\cO(1)$. By a theorem of Gotzmann \cite{Gotzmann1978} (see also the discussion in \cite[Section 1.8.C, Theorem 1.8.35]{Lazarsfeld_positivity_I}) there are unique integers
    \[ a_1 \geq a_2 \geq \dots \geq a_s \geq 0, \]
    such that $P(m)$ can be expressed in the form
    \[ P(m) = \binom{m+a_1}{a_1} + \binom{m+a_2-1}{a_2} + \dots + \binom{m+a_s - (s-1)}{a_s}. \]
    In particular, there are only finitely many possibilities for such $P$ of a fixed degree and bounded leading coefficient. 
\end{remark}

Using Hilbert schemes one can build a countable family of schemes parametrising all projective schemes. We need the following variant.

\begin{proposition}~\label{proposition_0_def_family_of_flat_morphisms}
    There is a countable family of smooth projective morphisms between finite type schemes over $\Kfield$
    \[ \{ \cX_i \to S_i \}_{i \in I}\]
    together with closed subschemes $\cZ_i \subset \cX_i$ such that $S_i$ are quasi-projective over $\Kfield$ and for every smooth, geometrically integral variety $X \subset \PP_{l}^n$ together with a closed subscheme $Z \subset X$ over any field $l$ extending $\Kfield$, there is an $i \in I$ and $s \in S_i(l)$ such that the inclusions $Z \subset X, (\cZ_i)_s \subset (\cX_i)_s$ are isomorphic over $l$.
\end{proposition}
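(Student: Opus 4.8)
The plan is to build the family by combining the Hilbert scheme of projective space with a relative Hilbert scheme over its smooth locus. Fix $n \geq 0$ and a polynomial $P \in \QQ[t]$, and consider the Hilbert scheme $H = \un{\Hilb}^P_{\Kfield}(\PP^n_{\Kfield})$, which is projective over $\Kfield$, together with its universal family $\cU \subset \PP^n_{\Kfield} \times_{\Kfield} H$, flat and projective over $H$. Let $S' \subset H$ be the locus of points over which $\cU \to H$ is smooth. Since the non-smooth locus is closed in $\cU$ and $\cU \to H$ is proper, its image is closed in $H$, so $S'$ is open in $H$ and hence quasi-projective over $\Kfield$. Set $\cX := \cU \times_H S' \to S'$; this is a smooth projective morphism, and the restriction of $\cO_{\PP^n}(1)$ gives it a relatively ample line bundle.

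Next I form the relative Hilbert scheme $\un{\Hilb}_{S'}(\cX) \to S'$, which decomposes as $\bigcup_Q \un{\Hilb}^Q_{S'}(\cX)$ over polynomials $Q \in \QQ[t]$, each piece being projective over $S'$ and therefore quasi-projective over $\Kfield$. For each triple $i = (n,P,Q)$ I put $S_i := \un{\Hilb}^Q_{S'}(\cX)$, define $\cX_i := \cX \times_{S'} S_i \to S_i$ (smooth projective, being a base change of $\cX \to S'$), and let $\cZ_i \subset \cX_i$ be the universal closed subscheme carried by the relative Hilbert scheme. The index set $I$ of all such triples is countable, since there are countably many $n$ and countably many polynomials in $\QQ[t]$.

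It remains to check the covering property. Given a smooth, geometrically integral $X \subset \PP^n_l$ with a closed subscheme $Z \subset X$ over a field $l \supseteq \Kfield$, let $P$ be the Hilbert polynomial of $X$ with respect to $\cO(1)$. The point $[X]$ of $\un{\Hilb}^P_l(\PP^n_l) = H \times_{\Kfield} l$ composes with the projection to give an $l$-point $s_0$ of $H$ whose associated fiber of $\cU$ is $X$, by the base-change and universal-family properties recalled in Remark~\ref{remark_nice_properties_hilbert_schemes}. Writing $p$ for the image of $s_0$ in $H$, we have $X = \cU_p \times_{\kappa(p)} l$; since $X$ is smooth over $l$ and smoothness descends along the faithfully flat field extension $l/\kappa(p)$, the fiber $\cU_p$ is smooth over $\kappa(p)$, so $s_0$ factors through $S'$ and $\cX_{s_0} \cong X$ over $l$. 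Letting $Q$ be the Hilbert polynomial of $Z \subset \cX_{s_0}$, the subscheme $Z$ determines a point $s \in \un{\Hilb}^Q_{S'}(\cX)(l) = S_i(l)$ lying over $s_0$, and by construction the inclusion $(\cZ_i)_s \subset (\cX_i)_s$ is isomorphic to $Z \subset X$ over $l$.

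The only genuinely nonformal input is the openness of the smooth locus $S'$ in $H$, which is what makes $S_i$ quasi-projective and $\cX_i \to S_i$ a smooth morphism; everything else is bookkeeping with the properties of Hilbert schemes. I note that only smoothness of $X$, not its geometric integrality, is used to place $[X]$ in $S'$, so the fibers of $\cX_i \to S_i$ need not be geometrically integral, and one could equally intersect $S'$ with the (also open) locus of geometrically integral fibers without affecting the argument.
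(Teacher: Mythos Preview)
Your proof is correct and follows essentially the same route as the paper: parametrize closed subschemes of $\PP^n$ via the Hilbert scheme, restrict the base to the locus of smooth fibers, and then run a relative Hilbert scheme over that to capture the pair $Z \subset X$. The differences are cosmetic: you index by triples $(n,P,Q)$ of Hilbert polynomials rather than by connected components, and you cut down to the smooth locus \emph{before} forming the second Hilbert scheme rather than after; your descent argument showing $s_0$ lands in $S'$ is also a bit more explicit than the paper's appeal to generic smoothness.
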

\begin{proof}
    Let $H_0$ be a connected component of $\un{\Hilb}_{\Kfield}(\PP_{\Kfield}^n)$ and denote by $U \subset \PP_{\Kfield}^n \times_{\Kfield} H_0$ the intersection of the universal family of $\un{\Hilb}_{\Kfield}(\PP_{\Kfield}^n)$ with $\PP_{\Kfield}^n \times_{\Kfield} H_0$. Note that $U \to H_0$ is a flat morphism between projective varieties over $\Kfield$.

    Let $H_1 = \un{\Hilb}_{H_0}(U)$ and denote by $\cZ \subset U \times_{H_0} \un{\Hilb}_{H_0}(U)$ the universal family of $\un{\Hilb}_{H_0}(U)$. Let $S_1$ be a connected component of $H_1$ and let $\cX_1 = U \times_{H_0} S_1$. Moreover, define $\cZ_1$ to be the intersection of $\cZ$ with $\cX_{1}$.

    Let $Z \subset X \subset \PP_{l}^n$ be a given tuple of subschemes of $\PP_{l}^n$ over a field $l$ extending $\Kfield$. Let $h = [X] \in \un{\Hilb}_{\Kfield}(\PP_{\Kfield}^n)(l)$ be the corresponding point in the Hilbert scheme and assume that $h \in H_0(l)$. By the definition of the universal family $U$ we then get that $U_h \subset \PP_{l}^n$ is equal to $X$ in the sense of Remark~\ref{remark_nice_properties_hilbert_schemes}. Morphisms from $\Spec(l)$ to $H_1$ over $h \in H_0(l)$ correspond to subschemes of $U_h = X$, so there is $s \in H_1(l)$ corresponding to $Z \subset X$. Assume that $s \in S_1(l)$. Then we get a pullback diagram
    % https://q.uiver.app/#q=WzAsNixbMCwyLCJcXFNwZWMoXFxLZmllbGQpIl0sWzEsMiwiU18xIl0sWzEsMSwiXFxjWF8xIl0sWzAsMSwiWCJdLFsxLDAsIlxcY1oiXSxbMCwwLCJaIl0sWzMsMl0sWzAsMSwicyJdLFsyLDFdLFszLDBdLFs0LDIsIiIsMSx7InN0eWxlIjp7InRhaWwiOnsibmFtZSI6Imhvb2siLCJzaWRlIjoiYm90dG9tIn19fV0sWzUsMywiIiwxLHsic3R5bGUiOnsidGFpbCI6eyJuYW1lIjoiaG9vayIsInNpZGUiOiJib3R0b20ifX19XSxbNSw0XV0=
    \[\begin{tikzcd}
    	Z & \cZ \\
    	X & {\cX_1} \\
    	{\Spec(\Kfield)} & {S_1}
    	\arrow[from=2-1, to=2-2]
    	\arrow["s", from=3-1, to=3-2]
    	\arrow[from=2-2, to=3-2]
    	\arrow[from=2-1, to=3-1]
    	\arrow[hook', from=1-2, to=2-2]
    	\arrow[hook', from=1-1, to=2-1]
    	\arrow[from=1-1, to=1-2]
    \end{tikzcd}\]
    In this construction we made a choice of a connected component twice. By looking at all possible such choices, and then shrinking each component obtained $S_1$ by a (relatively) open subset over which fibers of $\cX_1 \to S_1$ are smooth and geometrically integral (note that this is possible by generic smoothness, e.g. by \cite[Lemma 0C3K, Lemma 056V]{stacks-project}), we get a countable family $\{ \cX_i \to S_i \}_{i \in I}$ that we were looking for.
\end{proof}

Closely related to Hilbert schemes are Hom schemes. Intuitively, the point is that if we can form moduli of subsets of varieties, then we can also form moduli of morphisms between varieties, by looking at graphs of such morphisms. We give a functorial description below.

\begin{definition}
    Let $\cX, \cY$ be two projective schemes over $S$ with $\cX \to S$ flat. There exists a scheme over $S$ denoted by $\un{\Hom}_S(\cX, \cY)$ such that for any scheme $T \in \Sch_S$ the set of morphisms from $T$ to $\un{\Hom}_S(\cX, \cY)$ is naturally identified with
    \[ \un{\Hom}_S(\cX, \cY)(T) = \{ \textnormal{morphisms } \cX_T \to \cY_T \textnormal{ over } T \}. \]
    We call this scheme the \textit{Hom scheme} of morphisms from $\cX$ to $\cY$ over $S$. There is an open embedding $\un{\Hom}_S(\cX, \cY) \to \un{\Hilb}_S(\cX \times_S \cY)$ which on the level of functors $\Sch_S \to \Set$ is given by sending a morphism $f:\cX_T \to \cY_T$ to its graph $\Gamma_f \subset \cX_T \times_T \cY_T = (\cX \times_S \cY)_T$ (which is flat over $T$). 
\end{definition}

Again, the relative Hom scheme fits together the family of Hom schemes of morphisms between fibers of families $\cX, \cY$. More precisely, if $s \in S(\Kfield)$, then the base-change of $\un{\Hom}_S(\cX, \cY)$ via $s$ is the scheme $\un{\Hom}_\Kfield(\cX_s, \cY_s)$ and its $\Kfield$-points correspond to maps $\cX_s \to \cY_s$ over $\Kfield$.

\begin{remark}~\label{remark_closed_subscheme_of_codomain_gives_closed_hom_scheme}
    If $\cY_0 \subset \cY$ is a closed subscheme, then there exists a natural closed immersion $\un{\Hom}_S(\cX, \cY_0) \subset \un{\Hom}_S(\cX, \cY)$ of $S$-schemes. As a map of functors it sends $f:\cX_T \to (\cY_0)_T$ to the composition of $f$ with the embedding $(\cY_0)_T \subset \cY_T$.
\end{remark}

\begin{remark}~\label{remark_constant_maps_component_in_the_hom_scheme}
    Consider the morphism $\cY \to \un{\Hom}_S(\cX,\cY)$ of $S$-schemes given by the map of functors
    \[ (T \xrightarrow{g} \cY) \mapsto (\cX_T \xrightarrow{p_2} T \xrightarrow{(g, \id)} \cY_T). \]
    This map is a monomorphism and one can check that it satisfies the valuative criterion of properness. Thus it is a closed immersion. We will denote by $\un{\Hom}_S^{\con}(\cX,\cY)$ the corresponding closed subscheme of $\un{\Hom}_S(\cX,\cY)$, and by $\un{\Hom}_S^{\nc}(\cX,\cY)$ its complement.
\end{remark}

To understand the structure of the constant and non-constant parts of the Hom scheme, one can use the following.

\begin{lemma}~\label{lemma_rigidity_lemma}~\cite[11.5.12. Rigidity Lemma]{Ravi_Vakil_FOAG}
    Let $X$ be a projective, geometrically integral variety over $\KK$ with a $\KK$-point, and $Y$ be an irreducible variety over $\KK$. Fix a morphism $f:X \times_{\KK} Y \to Z$. If $q \in Y$ (a scheme theoretic point) is such that $f$ is constant on $X \times \{q\}$, then $f$ factors through $Y$.
\end{lemma}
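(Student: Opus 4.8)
The plan is to produce an explicit factorization. Since $X$ has a $\KK$-point $x_0$, this point gives a section $s \colon Y \to X \times_\KK Y$, $y \mapsto (x_0, y)$, of the projection $p_2 \colon X \times_\KK Y \to Y$. Setting $g := f \circ s \colon Y \to Z$, it suffices to prove that $f = g \circ p_2$; this exhibits the desired factorization through $Y$. Two auxiliary facts will be used repeatedly: first, $X \times_\KK Y$ is integral (irreducible and reduced), because $X$ is geometrically integral and $Y$ is a variety; second, since $Z$ is separated, the equalizer of any two morphisms into $Z$ is a closed subscheme of the source, so to check equality of morphisms out of a reduced scheme it is enough to verify it on a dense set of points, fiber by fiber.

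First I would localize near the image of the special fiber. Let $z_0$ be the point with $f(X \times \{q\}) = \{z_0\}$ and choose an affine open $U \subset Z$ containing $z_0$. The set $f^{-1}(Z \setminus U)$ is closed in $X \times_\KK Y$, and since $X$ is projective the projection $p_2$ is proper, hence closed; therefore $W := p_2\bigl(f^{-1}(Z \setminus U)\bigr)$ is closed in $Y$. By hypothesis the fiber over $q$ avoids $f^{-1}(Z\setminus U)$, so $q \notin W$ and $V := Y \setminus W$ is an open neighborhood of $q$ with $f(X \times \{y\}) \subset U$ for every $y \in V$.

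The key geometric input is that each such fiber map is constant. For $y \in V$ the restriction $f|_{X \times \{y\}}$ is a morphism from the proper, geometrically integral $\kappa(y)$-scheme $X_{\kappa(y)}$ to the affine scheme $U$; pulling back regular functions on $U$ gives global regular functions on $X_{\kappa(y)}$, and because $X$ is geometrically integral one has $H^0(X_{\overline{\kappa(y)}}, \cO) = \overline{\kappa(y)}$, so these are constant. Hence $f|_{X \times \{y\}}$ factors through a single point of $U$. Consequently, for every $y \in V$ the morphisms $f$ and $g \circ p_2$ are both constant on the fiber $X \times \{y\}$ and agree at the point $(x_0, y) = s(y)$, so they agree on the whole fiber. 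As $X \times V$ is reduced and the equalizer of $f$ and $g\circ p_2$ is closed and contains every point of $X \times V$, we conclude $f = g \circ p_2$ on $X \times V$.

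Finally I would spread this equality over all of $Y$. The equalizer of $f$ and $g \circ p_2$ on $X \times_\KK Y$ is a closed subscheme containing the open subset $X \times V$, which is dense because $X \times_\KK Y$ is irreducible and $V$ is dense in the irreducible $Y$. Since $X \times_\KK Y$ is reduced, a closed subscheme containing a dense open subset is the whole space, so $f = g \circ p_2$ everywhere, completing the factorization. The main obstacle is the fiberwise constancy step: it is exactly here that geometric integrality of $X$ (rather than mere connectedness) is indispensable, since it guarantees $H^0 = \text{field}$ after any base change and forces maps to affines to collapse to points; the surrounding properness and equalizer arguments are then routine scheme-theoretic bookkeeping.
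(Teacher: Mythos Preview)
Your argument is correct and is precisely the classical proof of the rigidity lemma: use properness of $p_2$ to find an open neighborhood $V$ of $q$ over which all fibers land in an affine $U$, invoke $H^0(X_{\kappa(y)},\cO)=\kappa(y)$ from geometric integrality to force constancy, and then propagate via the closed equalizer and integrality of $X\times_\KK Y$. The paper does not supply its own proof here---it simply cites \cite[11.5.12]{Ravi_Vakil_FOAG}---and your write-up is exactly the argument one finds there, so there is nothing to compare.
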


\begin{remark}~\label{remark_connected_component_in_a_Hilbert_scheme}
    Let $X, Z$ be projective varieties over $\KK$ with $X$ geometrically integral. Consider the closed immersion $Z \simeq \un{\Hom}_{\KK}^{\con}(X,Z) \subset \un{\Hom}_{\KK}(X,Z)$. In this case, its image is actually a connected component of $\un{\Hom}_{\KK}(X,Z)$, as otherwise one could construct a map from an irreducible variety $Y \to \un{\Hom}_{\KK}(X,Z)$ whose image intersects $\un{\Hom}_{\KK}^{\con}(X,Z)$, but this would contradict the rigidity lemma above, at least after passing to the algebraic closure of $\KK$. Note that we can pass freely to the algebraic closure, as 
    \[ \un{\Hom}_{\KK}(X,Z) \otimes_{\KK} \overline{\KK} \simeq \un{\Hom}_{\overline{\KK}}(X_{\overline{\KK}},Z_{\overline{\KK}} ). \]
\end{remark}

Using Hom schemes we study the following property.
\begin{definition}
    We call a projective variety $X$ (over $\Kfield$) \textit{indecomposable} over $\KK$, if for any geometrically integral, smooth, projective varieties $X_1, X_2$ over $\KK$, any map $X_1 \times X_2 \to X$ factors through $X_1$ or $X_2$. Moreover, for a variety over a non-necessarily algebraically closed field, we call it \textit{geometrically indecomposable} if its base-change to the algebraic closure is indecomposable.
\end{definition}
\begin{lemma}~\label{lemma_indecomposability_and_discreteness_of_hom_schemes}
    Let $X_2, X$ be projective varieties over $\Kfield$ with $X_2$ geometrically integral. Consider the following properties:
    \begin{enumerate}
        \item The scheme $\un{\Hom}_{\Kfield}^{\nc}(X_2, X)$ is discrete.
        \item For every geometrically integral, projective variety $X_1$ over $\KK$ any map $X_1 \times_{\Kfield} X_2 \to X$ factors through a projection to $X_1$ or $X_2$.
    \end{enumerate}
    Then $(1) \implies (2)$ and if $\KK$ is algebraically closed, also the other implication holds.
\end{lemma}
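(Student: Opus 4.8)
The plan is to run everything through the exponential law (currying) for relative Hom schemes. Since $X_1,X_2$ are projective over the field $\KK$, the product $X_1\times_\KK X_2$ is projective and flat over $\KK$, and the functor-of-points description of Hom schemes yields by Yoneda a natural isomorphism
\[
\un{\Hom}_\KK(X_1\times_\KK X_2,\,X)\;\cong\;\un{\Hom}_\KK\bigl(X_1,\,\un{\Hom}_\KK(X_2,X)\bigr),
\]
under which a morphism $f:X_1\times X_2\to X$ corresponds to its curried form $\tilde f:X_1\to\un{\Hom}_\KK(X_2,X)$, $x_1\mapsto f|_{\{x_1\}\times X_2}$. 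Because $X_2$ is geometrically integral, Remark~\ref{remark_connected_component_in_a_Hilbert_scheme} gives that $\un{\Hom}^{\con}_\KK(X_2,X)\cong X$ is a union of connected components, so $\un{\Hom}_\KK(X_2,X)=\un{\Hom}^{\con}_\KK(X_2,X)\sqcup\un{\Hom}^{\nc}_\KK(X_2,X)$ is a disjoint union of open-and-closed pieces. Writing $p_1,p_2$ for the two projections off $X_1\times X_2$, the dictionary I would record first is: $f$ factors through $p_1$ iff $\tilde f$ lands in $\un{\Hom}^{\con}_\KK(X_2,X)$ (each $f|_{\{x_1\}\times X_2}$ is constant), and $f$ factors through $p_2$ iff $\tilde f$ is a constant morphism.

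For $(1)\Rightarrow(2)$ I would take $X_1$ geometrically integral projective and any $f:X_1\times X_2\to X$, with $\tilde f$ as above. As $X_1$ is connected, its image lands in one of the two clopen pieces. If it lands in $\un{\Hom}^{\con}_\KK(X_2,X)\cong X$, then $\tilde f=g$ for some $g:X_1\to X$ and $f=g\circ p_1$. If it lands in $\un{\Hom}^{\nc}_\KK(X_2,X)$, which by (1) is discrete, then the image lies in a single connected component $\Spec A$ with $A$ a finite local $\KK$-algebra of residue field $\kappa$; since $X_1$ is reduced, $\tilde f$ factors through $\Spec\kappa$, and since $H^0(X_1,\cO_{X_1})=\KK$ by geometric integrality, the only $\KK$-algebra map $\kappa\to\KK$ forces $\kappa=\KK$, so $\tilde f$ is constant and $f=h\circ p_2$ for the corresponding $h:X_2\to X$. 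Either way $f$ factors through a projection; note this direction uses neither properness nor algebraic closedness.

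For $(2)\Rightarrow(1)$ over an algebraically closed $\KK$ I would argue by contraposition, after recording that the dictionary above makes (2) equivalent to the assertion that \emph{every morphism from a geometrically integral projective variety to $\un{\Hom}^{\nc}_\KK(X_2,X)$ is constant}. So it suffices to produce, from a non-discrete $\un{\Hom}^{\nc}_\KK(X_2,X)$, a non-constant such morphism. If $\un{\Hom}^{\nc}_\KK(X_2,X)$ is not discrete it has a positive-dimensional irreducible reduced component $W$, which over the algebraically closed $\KK$ is geometrically integral; I would choose a complete curve $C\subset W$, pass to its normalization $X_1$ (a smooth projective geometrically integral curve), and let $f:X_1\times X_2\to X$ be obtained by pulling back the universal (evaluation) morphism $X_2\times\un{\Hom}_\KK(X_2,X)\to X$ along $X_1\to C\hookrightarrow W$. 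This $f$ cannot factor through $p_1$ (the maps $f|_{\{x_1\}\times X_2}$ are the \emph{non-constant} morphisms parametrized by points of $\un{\Hom}^{\nc}$) nor through $p_2$ (distinct points of $W$ parametrize distinct morphisms $X_2\to X$, so $\tilde f$ is non-constant), contradicting (2).

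The step I expect to be the main obstacle is exactly the production of a complete curve $C$ lying \emph{inside} $\un{\Hom}^{\nc}_\KK(X_2,X)$: this Hom scheme is only quasi-projective, and a positive-dimensional quasi-projective variety need not contain any complete curve (for instance $\mathrm{PGL}_2\subset\un{\Hom}^{\nc}_\KK(\Pp^1,\Pp^1)$ is affine, so there (2) holds while (1) fails). Replacing $W$ by a complete curve in its projective closure $\overline W\subset\un{\Hilb}$ only yields a rational map $X_1\times X_2\dashrightarrow X$ whose indeterminacy over the boundary (where the graphs degenerate to non-graphs) need not resolve to an honest non-factoring morphism. Thus the reverse implication genuinely requires that the relevant component of $\un{\Hom}^{\nc}_\KK(X_2,X)$ be proper—equivalently, that it contain a complete curve—which is precisely what holds in the bounded/hyperbolic setting in which this lemma is applied; I would therefore make this finiteness/properness of the Hom scheme explicit in the hypotheses (or restrict to that case) so that the complete curve $C$ exists and the argument closes.
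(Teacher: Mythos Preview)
Your argument for $(1)\Rightarrow(2)$ is correct and is exactly the paper's currying argument, written out in more detail; the paper compresses the case where $\tilde f$ lands in the discrete part to the single observation that a geometrically integral $\KK$-variety mapping to a discrete $\KK$-scheme must factor through $\Spec\KK$.

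For $(2)\Rightarrow(1)$ you have identified a genuine gap, and it is one that the paper's own proof shares. The paper's argument for this direction is one sentence: from non-discreteness it produces ``an integral variety $X_1$'' with a non-constant map to $\un{\Hom}^{\nc}_\KK(X_2,X)$ and declares this to contradict $(2)$. But $(2)$ quantifies only over \emph{projective} $X_1$, and your $\PGL_2$ observation is a valid counterexample to the implication as literally stated: with $X_2=X=\PP^1$, each degree-$d$ component of $\un{\Hom}^{\nc}_\KK(\PP^1,\PP^1)$ is the complement of the resultant hypersurface in $\PP^{2d+1}$, hence affine, so every morphism from a connected projective variety into it is constant; thus $(2)$ holds while $(1)$ fails.

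The extra hypothesis that makes the paper's application (Proposition~\ref{proposition_geometric_indecomposability}) go through is not properness of the Hom scheme but \emph{purity of $X$}. In that proposition $X$ is indecomposable, hence groupless (Lemma~\ref{lemma_indecomposable_is_groupless}), hence pure. Assuming $X$ pure and $X_2$ smooth, take a positive-dimensional integral $W\subset\un{\Hom}^{\nc}_\KK(X_2,X)$, choose a smooth projective $X_1$ birational to $W$, and extend the resulting rational map $X_1\times_\KK X_2\dashrightarrow X$ to a morphism by purity (the source is smooth). Factoring through $p_1$ forces the parametrised maps $X_2\to X$ to be constant, contradicting $W\subset\un{\Hom}^{\nc}$; factoring through $p_2$ forces the inclusion $W\hookrightarrow\un{\Hom}$ to be constant, contradicting $\dim W>0$. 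So your instinct to add a hypothesis was correct; the natural one is purity of $X$, and it is automatically available everywhere the lemma is actually used.
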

\begin{proof}
    Note that by Remark~\ref{remark_connected_component_in_a_Hilbert_scheme}, a non-constant map $X_1 \to \un{\Hom}_{\Kfield}^{\nc}(X_2, X)$ corresponds exactly to a map $X_1 \times_{\Kfield} X_2 \to X$ over $\Kfield$ not factoring through the projections to both $X_1$ and $X_2$. 
    
    If $X_1$ is geometrically integral, then any map from $X_1$ to a discrete scheme over $\KK$ factors through a morphism from $\Spec(\KK)$. Thus discreteness of $\un{\Hom}_{\Kfield}^{\nc}(X_2, X)$ implies that any map $X_1 \to \un{\Hom}_{\Kfield}^{\nc}(X_2, X)$ factors through a $\KK$-point of $\un{\Hom}_{\Kfield}^{\nc}(X_2, X)$ which corresponds to the fact that any map $X_1 \times_{\Kfield} X_2 \to X$ factors through a projection to $X_2$.

    On the other hand, in the case $\KK$ is algebraically closed, if $\un{\Hom}_{\Kfield}^{\nc}(X_2, X)$ is not discrete, then there is an integral variety $X_1$ over $\KK$ with a non-constant map $X_1 \to \un{\Hom}_{\Kfield}^{\nc}(X_2, X)$ which contradicts $(2)$.
\end{proof}

\begin{proposition}~\label{proposition_geometric_indecomposability}
    Let $X$ be a projective variety over $\Kfield$. If $X$ is geometrically indecomposable, then $X_l$ is indecomposable over $l$ for any field extension $\Kfield \subset l$. Moreover, $X$ is geometrically indecomposable if and only if $X_l$ is.
\end{proposition}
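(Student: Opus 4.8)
The plan is to reformulate indecomposability over an algebraically closed field in terms of discreteness of non-constant Hom schemes, and then transport this property along field extensions using base change together with a spreading-out argument. Throughout I write $\overline{\KK}$ for an algebraic closure of $\Kfield$ and fix an embedding $\overline{\KK} \hookrightarrow \overline{l}$ extending $\Kfield \subset l$. The first step is to record the following reformulation, valid for any algebraically closed field $K$ and projective $X/K$: $X$ is indecomposable over $K$ if and only if $\un{\Hom}_K^{\nc}(Y, X)$ is discrete for every geometrically integral, smooth, projective variety $Y$ over $K$. The implication ``discrete $\Rightarrow$ indecomposable'' is immediate from Lemma~\ref{lemma_indecomposability_and_discreteness_of_hom_schemes}, $(1)\Rightarrow(2)$, taking $X_2 = Y$ (a smooth test variety is in particular geometrically integral). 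For the converse I would run the proof of $(2)\Rightarrow(1)$ in reverse: if $\un{\Hom}_K^{\nc}(Y,X)$ is not discrete, Remark~\ref{remark_connected_component_in_a_Hilbert_scheme} produces an integral variety mapping non-constantly into it; passing to a general curve and normalizing yields a \emph{smooth} projective curve $C$ with a non-constant map $C \to \un{\Hom}_K^{\nc}(Y,X)$. Since the image lands in the non-constant locus, the corresponding map $C \times_K Y \to X$ is non-constant on each fibre $\{c\}\times Y$ (so it does not factor through $C$) and varies with $c$ (so it does not factor through $Y$), contradicting indecomposability for the smooth test pair $C, Y$.

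Next I would isolate the two behaviours of this discreteness condition under a field extension $K \subset K'$ of algebraically closed fields. For $Y$ defined over $K$ one has $\un{\Hom}_K^{\nc}(Y,X) \otimes_K K' \cong \un{\Hom}_{K'}^{\nc}(Y_{K'}, X_{K'})$ by base change of Hom schemes, compatibly with the constant locus of Remark~\ref{remark_constant_maps_component_in_the_hom_scheme}; and discreteness of each finite-type Hilbert-polynomial piece, hence of the whole Hom scheme, is unchanged under a field extension because dimension is preserved. This already yields the easy descent direction: if $X_{K'}$ is indecomposable then so is $X_K$, and in particular $X_{\overline{l}}$ indecomposable $\Rightarrow X_{\overline{\KK}}$ indecomposable.

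The crux is the opposite transport: assuming $X_{\overline{\KK}}$ indecomposable, show that $\un{\Hom}_{\overline{l}}^{\nc}(Y, X_{\overline{l}})$ is discrete for an \emph{arbitrary} geometrically integral smooth projective $Y/\overline{l}$, even when $Y$ does not descend to $\overline{\KK}$. Here I would spread out: choose a subfield $\overline{\KK} \subseteq F \subseteq \overline{l}$ finitely generated over $\overline{\KK}$ over which $Y$ is defined, write $F = \overline{\KK}(U)$ for a variety $U/\overline{\KK}$, and extend $Y$ to a smooth projective family $\cY \to U$, shrinking $U$ by generic smoothness so that all fibres are geometrically integral, smooth, and projective. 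Forming the relative Hom scheme $\un{\Hom}_U^{\nc}(\cY, X_{\overline{\KK}} \times_{\overline{\KK}} U) \to U$ and decomposing it via Hilbert polynomials into finite-type pieces over $U$, the fibre over a closed point $t \in U(\overline{\KK})$ is $\un{\Hom}_{\overline{\KK}}^{\nc}(\cY_t, X_{\overline{\KK}})$, which is discrete by the reformulation applied to the hypothesis. By upper semicontinuity of fibre dimension, the locus of positive-dimensional fibres is closed in each piece, with constructible image in $U$; containing no closed point of the Jacobson scheme $U$, this image is empty, so every fibre, in particular the generic one $\un{\Hom}_F^{\nc}(Y_F, X_F)$, is discrete. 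Base-changing along $F \hookrightarrow \overline{l}$ and using discreteness-descent gives discreteness of $\un{\Hom}_{\overline{l}}^{\nc}(Y, X_{\overline{l}})$, whence $X_{\overline{l}}$ is indecomposable.

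Finally I would assemble the statements. Given $X$ geometrically indecomposable, i.e.\ $X_{\overline{\KK}}$ indecomposable, the crux gives $X_{\overline{l}}$ indecomposable; then, to see $X_l$ is indecomposable over $l$, I take smooth geometrically integral $X_1, X_2$ over $l$ and a map $X_1 \times_l X_2 \to X_l$, and conclude by Lemma~\ref{lemma_indecomposability_and_discreteness_of_hom_schemes} $(1)\Rightarrow(2)$ once $\un{\Hom}_l^{\nc}(X_2, X_l)$ is known discrete, which follows from discreteness of its base change $\un{\Hom}_{\overline{l}}^{\nc}((X_2)_{\overline{l}}, X_{\overline{l}})$. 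This proves the first assertion. The ``moreover'' is the equivalence $X_{\overline{\KK}}$ indecomposable $\iff X_{\overline{l}}$ indecomposable, whose two directions are exactly the crux and the easy descent above. I expect the spreading-out/semicontinuity step to be the main obstacle, both in arranging a family $\cY \to U$ with geometrically integral smooth fibres and in handling the countably many Hilbert-polynomial components uniformly; the smoothness-bridging via curves and the dimension-based descent of discreteness are routine by comparison.
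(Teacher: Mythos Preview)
Your argument is correct and shares the paper's core strategy: reformulate indecomposability as discreteness of the non-constant Hom schemes via Lemma~\ref{lemma_indecomposability_and_discreteness_of_hom_schemes}, then propagate discreteness from $\overline{\KK}$-fibres to arbitrary fibres by a semicontinuity/Jacobson argument applied to a family of test varieties. The one structural difference is in how the family is produced. The paper uses the fixed countable universal family $\{\cX_i \to S_i\}$ already built in Proposition~\ref{proposition_0_def_family_of_flat_morphisms}, forms $H_i = \un{\Hom}_{S_i}^{\nc}(\cX_i, X \times S_i) \to S_i$ once, observes that geometric indecomposability says precisely that each $H_i \to S_i$ has discrete fibres over $\overline{\KK}$-points, and then concludes local quasi-finiteness (hence discrete fibres over \emph{all} points, including $l$-points) in one stroke. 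You instead spread out each individual test variety $Y/\overline{l}$ to a bespoke family $\cY \to U$ over $\overline{\KK}$ and rerun the semicontinuity argument for it. The paper's packaging is shorter and neatly sidesteps what you flagged as the main obstacle---producing a smooth projective model $\cY \to U$ with geometrically integral fibres---because that work is absorbed into the Hilbert-scheme construction of Proposition~\ref{proposition_0_def_family_of_flat_morphisms}; your route is more self-contained in that it does not depend on that proposition, at the cost of redoing the spreading-out by hand.
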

\begin{proof}
    Let $\{\cX_i \to S_i\}_{i \in I}$ be the set from Proposition~\ref{proposition_0_def_family_of_flat_morphisms}. By Lemma~\ref{lemma_indecomposability_and_discreteness_of_hom_schemes} (and resolution of singularities),  indecomposability of $X$ is equivalent to the following condition: for every morphism $\cX_i \to S_i$ from the above collection, if $\cX \to S$ is its base-change to $\Kfield$, then the morphism $H:=\un{\Hom}_S^{\nc}(\cX, X_S) \to S$ has discrete fibers over $\overline{\Kfield}$-points. Note that by choosing relatively ample line bundle on $\cX$ and an ample line bundle on $X$, the scheme $H \to S$ can be presented as a disjoint sum of quasi-projective components over $S$. Hence, $H \to S$ is locally quasi-finite, and one can use \cite[Lemma 06RT]{stacks-project} to get that for any field extension $\Kfield \subset l$, the fibers over $l$-points of $S$ are also discrete, which by Lemma~\ref{lemma_indecomposability_and_discreteness_of_hom_schemes} proves that $X_l$ is indecomposable over $l$.
    
    To see the moreover part of the proposition, we only need to see that if the fibers over $\overline{l}$-points of $S$ of morphisms $H \to S$ are discrete, then also fibers over $\overline{k}$-points of $S$ are discrete. This is clear as we can embed $\overline{k} \subset \overline{l}$.
\end{proof}

\section{Rigidity in algebraic geometry}~\label{sec:rigidity}

In this section we present some results from algebraic geometry that one can use to find examples of varieties $V$ for which the theory of fields without points in $V$ has a model companion and some further nice properties. In the case of \cite{johnson2023curveexcluding} the variety $V=C$ is a genus $\geq 2$ curve and this assumption is used to construct some uniformly definable sets (moduli) of rational functions from smooth varieties to $C$. Recall that the genus of a smooth curve $C$ is the dimension of the space of global differential forms on $C$. It this section we consider some other positivity assumptions on the vector bundle $\Omega_V$ of differential forms on $V$. We outline how such assumptions imply existence of (moduli) spaces of rational functions from smooth varieties to $V$. We mostly follow \cite[Chapter 6]{Lazarsfeld_positivity_II} and \cite{algebraic_hyperbolicity} in our presentation.

Let $X$ be a smooth projective variety over an algebraically closed field $\KK$. If $X$ is a curve, then the cotangent bundle is in fact a line bundle on $X$ and it makes sense to ask whether it is ample. This is not the case in higher dimensions. Still, Hartshorne defined a notion of ampleness for vector bundles of arbitrary rank. For a vector bundle $E$ on $X$, we denote by $\PP(E)$ the projectivisation of $E$, i.e., the set of codimension one hyperplanes passing through zero in fibers of $E$ over $X$. 
\begin{definition}
    A vector bundle $E$ on $X$ is ample, if the line bundle $\cO_{\PP(E)}(1)$ on $\PP(E)$ is ample. Note that $\cO_{\PP(E)}(1)$ is always relatively ample over $X$.
\end{definition}
For a smooth curve $C$ its cotangent bundle is ample if and only if the genus of $g$ is bigger than one. In higher dimensions ampleness of the cotangent bundle intuitively means that the variety is very rigid. To make this statement more precise we introduce some rigidity notions in algebraic geometry.
\begin{definition}~\label{definition_hyperbolicity_notions_for_a_variety}
    Let $X$ be a smooth projective variety over $\KK$. Fix an ample line bundle $L$ on $X$. Below we only consider maps over $\KK$. We say that $X$ is:
    \begin{enumerate}
        \item \textit{Brody hyperbolic} if $\KK=\CC$ and there is no non-constant holomorphic map $f:\CC \to X$, where here we treat $X$ as a complex manifold;
        \item \textit{algebraically hyperbolic} if there exists $a,b \in \RR_{\geq 0}$ such that for every smooth curve $C$ over $\KK$ and a finite map $f:C \to X$ we have
        \[ \int_C c_1(f^*L) \leq a\cdot g(C) + b, \]
        where $g(C)$ is the genus of $C$;
        \item \textit{bounded} if, for every normal integral projective scheme $Y$ over $\KK$, the scheme $\un{\Hom}_{\KK}(Y,X)$ is of finite type over $\KK$ (in particular it is definable in $\ACF_0$ over $\KK$);
        \item \textit{$1$-bounded} if, for every smooth projective curve $C$ over $\KK$, the scheme $\un{\Hom}_{\KK}(C,X)$ is of finite type over $\KK$;
        \item \textit{groupless} if, for any connected finite type group scheme $G$ over $\KK$, there are no non-constant maps $G \to X$;
        \item \textit{pure} (over $\KK$) if, for any smooth variety $T$ over $\KK$, any rational map $T \dashrightarrow X$ extends to a regular map $T \to X$.
    \end{enumerate}
\end{definition}
\begin{remark}~\label{remark:weak_Lang_Vojta}
    In fact the following implications hold:
    \[ (1) \implies (2) \implies (3) \iff (4) \implies (5) \implies (6), \]
    where $(1) \implies (2)$ only makes sense if $\KK=\CC$. For the proofs of many of these implications, see \cite{algebraic_hyperbolicity}. Also, conditions $(1) - (5)$ are conjectured to be equivalent, see \cite[Section 9]{algebraic_hyperbolicity}. This is motivated by the case of curves, where $(1)-(5)$ are equivalent to the genus of the curve being bigger or equal two. Note that an elliptic curve over $\KK$ is pure, but not groupless.
\end{remark}
\begin{remark}~\label{remark_many_hyperbolicity_assumptions_are_geometric}
    Properties $(2)-(6)$ are all geometric, in the sense that for any extension $\KK \subset l$ of algebraically closed fields, they hold for $X$ over $\KK$ if and only if they hold for $X_l$ over $l$. For the proofs see \cite[Theorem 1.5, Theorem 1.11]{algebraic_hyperbolicity} and the next remark.
\end{remark}
\begin{remark}~\label{remark_equivalent_conditions_on_boundedness_etc}
    It is worth to note that some of the above properties can be described differently (assuming notation from Definition~\ref{definition_hyperbolicity_notions_for_a_variety}), namely:
    \begin{itemize}
        \item $X$ is pure if and only if every morphism $\PP^1 \to X$ is constant;
        \item $X$ is groupless if and only if for all abelian varieties $A$ over $\KK$, all morphisms $A \to X$ are constant;
        \item $X$ is bounded (or equivalently $1$-bounded) if and only if there exist real numbers $\alpha(g)$ depending on $X, L$, such that for all smooth projective curves $C$ over $\KK$ and maps $f:C \to X$ we have
        \[ \int_C c_1(f^*L) \leq \alpha(g(C)), \]
        where $g(C)$ is the genus of $C$.
    \end{itemize}
    For the proofs, see \cite{algebraic_hyperbolicity}. Also, let us remark that in \cite[Definition 3.1]{algebraic_hyperbolicity} there is an additional condition on the rational map $T \dashrightarrow X$, namely that it is defined on an open subset $U \subset T$ with complement of codimension $\geq 2$. This assumption is redundant for projective $X$, for example by the curve-to-projective theorem from \cite[Exercise 17.5.B]{Ravi_Vakil_FOAG}.
\end{remark}

If the base-field is not necessarily algebraically closed, we use the following variant of pureness.

\begin{definition}
    Let $\KK_0$ be any characteristic zero field (not necessarily algebraically closed). A projective variety $\widetilde{V}$ over $\KK_0$ is \textit{geometrically pure}, if the base-change $\widetilde{V}_{\overline{\KK_0}}$ is pure. 
\end{definition}

Geometric pureness is preserved under base-change in the following strong sense.

\begin{proposition}~\cite{purity_even_without_alg_closedness}~\label{proposition_geometrical_pureness_under_base_change}
    Let $\widetilde{V}$ be a geometrically pure, projective variety over a field $\KK_0$. Let $\KK_0 \subset l$ be any field extension. Then $\widetilde{V}_l$ satisfies the definition of pureness over $l$, i.e., for any smooth variety $T$ over $l$, any rational map $T \dashrightarrow \widetilde{V}_l$ extends to a regular one.
\end{proposition}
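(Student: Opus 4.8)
The plan is to reduce the statement over the possibly non-closed field $l$ to the already-available pureness over an algebraically closed field, and then to descend the resulting regularity along the faithfully flat extension $\overline{l}/l$. First I would fix an embedding $\overline{\KK_0} \hookrightarrow \overline{l}$ extending $\KK_0 \subset l$. Since $\widetilde{V}$ is geometrically pure, $\widetilde{V}_{\overline{\KK_0}}$ is pure, and because pureness is one of the geometric properties (Remark~\ref{remark_many_hyperbolicity_assumptions_are_geometric}, property $(6)$ of Definition~\ref{definition_hyperbolicity_notions_for_a_variety}), it is preserved under the extension $\overline{\KK_0} \subset \overline{l}$ of algebraically closed fields. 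Hence $\widetilde{V}_{\overline{l}}$ is pure over $\overline{l}$.

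Now fix a smooth variety $T$ over $l$ and a rational map $f : T \dashrightarrow \widetilde{V}_l$, defined on a dense open $U \subseteq T$. Base-changing along $l \subset \overline{l}$ gives a smooth $\overline{l}$-variety $T_{\overline{l}}$ (possibly reducible, but with smooth integral components) together with a rational map $f_{\overline{l}} : T_{\overline{l}} \dashrightarrow \widetilde{V}_{\overline{l}}$ defined on the dense open $U_{\overline{l}}$, which meets every component. Applying pureness of $\widetilde{V}_{\overline{l}}$ componentwise, $f_{\overline{l}}$ extends to a regular morphism $T_{\overline{l}} \to \widetilde{V}_{\overline{l}}$; equivalently, the domain of definition of $f_{\overline{l}}$ is all of $T_{\overline{l}}$.

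It remains to descend this regularity to $l$, and this is where the only real work lies. Let $\Gamma_f \subseteq T \times_l \widetilde{V}_l$ be the closure of the graph of $f|_U$, with projection $p : \Gamma_f \to T$; the domain of definition $\mathrm{dom}(f)$ is exactly the open locus over which $p$ is an isomorphism, since $\widetilde{V}_l$ is separated (indeed projective) and $T$ is normal (being smooth over $l$). I would check that the formation of $\Gamma_f$, and hence of the open set $\mathrm{dom}(f) = \{ p \textnormal{ iso} \}$, commutes with the flat base change $\overline{l}/l$, so that $\mathrm{dom}(f)_{\overline{l}} = \mathrm{dom}(f_{\overline{l}}) = T_{\overline{l}}$. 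Then $T \setminus \mathrm{dom}(f)$ is a closed subscheme whose base change to $\overline{l}$ is empty; since $\overline{l}/l$ is faithfully flat, $T \setminus \mathrm{dom}(f)$ is itself empty, so $f$ is regular on all of $T$. The main obstacle is precisely this compatibility of the indeterminacy locus with base change: one must confirm that closure of the graph commutes with the flat base change (which reduces to compatibility of scheme-theoretic image along a flat morphism with flat base change) and that the locus where $p$ is an isomorphism is open and stable under base change.
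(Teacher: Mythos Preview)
Your proof is correct and follows a genuinely different route from the paper. The paper simply invokes \cite[Proposition~6.2]{purity_even_without_alg_closedness}, applying it with $X = \widetilde{V}_l \times_l T \to T = S$, so the entire argument is outsourced to that reference. You instead give a self-contained descent argument: pass to $\overline{l}$, extend there using geometric pureness (via Remark~\ref{remark_many_hyperbolicity_assumptions_are_geometric}), and then descend regularity along the faithfully flat extension $\overline{l}/l$ by analyzing the graph closure.

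Your route is more elementary and transparent; the only ingredients beyond the pureness already recorded in the paper are that scheme-theoretic closure commutes with flat base change and that isomorphisms descend along faithfully flat maps, both standard. One small streamlining: rather than arguing that the isomorphism locus of $p:\Gamma_f \to T$ commutes with base change as a locus, it is cleaner to argue globally. Once you know $f_{\overline{l}}$ extends to a morphism $g$, the closed graph $\Gamma_g \cong T_{\overline{l}}$ contains $(\Gamma_f)_{\overline{l}}$ (since closure commutes with flat base change) and the inclusion is an equality because $(\Gamma_f)_{\overline{l}}$ contains the dense open $U_{\overline{l}}$ inside the reduced scheme $\Gamma_g$. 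Thus $p_{\overline{l}}$ is an isomorphism, and fpqc descent of isomorphisms gives that $p$ itself is an isomorphism, so $f$ is everywhere regular. This avoids having to verify pointwise base-change compatibility of the indeterminacy locus.
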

\begin{proof}
    This is a direct consequence of \cite[Proposition 6.2]{purity_even_without_alg_closedness} if we put $X = \widetilde{V}_l \times_l T \to T = S$ in the notation there.
\end{proof}

There exists a relative version of $1$-boundedness which we present below.
\begin{definition}
    Let $\Delta \subset X$ be a closed subscheme. We say that $X$ is \textit{$1$-bounded modulo $\Delta$}, if for every smooth projective curve $C$ over $\KK$, the scheme $\un{\Hom}_{\KK}(C,X) \setminus \un{\Hom}_{\KK}(C,\Delta)$ is of finite type over $\KK$. Similarly, we say that $X$ is \textit{bounded modulo $\Delta$}, if for every normal projective variety $Y$ over $\KK$, the scheme $\un{\Hom}_{\KK}(Y,X) \setminus \un{\Hom}_{\KK}(Y,\Delta)$ is of finite type over $\KK$. 
    
    In the case where $\KK$ is not algebraically closed, we say that $X$ is \textit{geometrically bounded modulo $\Delta$} if after base-change to the algebraic closure of $\KK$ the corresponding pair satisfies the definition of being bounded modulo a subscheme.
\end{definition}
In the non-relative setting these two notions are the same. In the relative setting this is only known under an additional assumption. 
\begin{lemma}~\cite[Lemma 9.9]{vanbommel2019boundedness}~\label{lemma_1_bdd_vs_bdd}
    Assume that $\KK$ is uncountable. Then $X$ being $1$-bounded modulo $\Delta$ implies that $X$ is bounded modulo $\Delta$.
\end{lemma}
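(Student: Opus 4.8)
The direction ``bounded modulo $\Delta$ $\Rightarrow$ $1$-bounded modulo $\Delta$'' is immediate, since smooth projective curves are in particular normal projective varieties. So assume $X$ is $1$-bounded modulo $\Delta$ and suppose, for contradiction, that $X$ is not bounded modulo $\Delta$: there is a normal projective variety $Y$ over $\KK$ such that $\un{\Hom}_{\KK}(Y,X) \setminus \un{\Hom}_{\KK}(Y,\Delta)$ is not of finite type. Via the open embedding into $\un{\Hilb}_{\KK}(Y \times X)$ (Section~\ref{sec:hilb}), the scheme $\un{\Hom}_{\KK}(Y,X)$ decomposes as a disjoint union of quasi-projective pieces indexed by the Hilbert polynomial of the graph with respect to $M \boxtimes L$, where $M$ is a fixed very ample line bundle on $Y$ and $L$ is the ample bundle on $X$. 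Not being of finite type means that infinitely many of these pieces meet $\un{\Hom}_{\KK}(Y,X)\setminus\un{\Hom}_{\KK}(Y,\Delta)$. The plan is to restrict the corresponding maps to a single, cleverly chosen complete-intersection curve in $Y$ and derive a contradiction with $1$-boundedness modulo $\Delta$ for that curve; uncountability of $\KK$ is what makes a single good curve available.

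First I would use the Khovanskii--Teissier inequalities (Remark~\ref{Khovanskii_Teissier_inequalities}) to turn ``infinitely many Hilbert polynomials'' into ``unbounded restricted degree.'' Set $d = \dim Y$. For a map $f \colon Y \to X$ the Hilbert polynomial of its graph is determined by the intersection numbers $s_j = \int_Y c_1(M)^{d-j} c_1(f^{*}L)^{j}$ for $j = 0, \dots, d$ (Remark~\ref{remark_nice_properties_hilbert_schemes}). Here $s_0 = \deg_M Y > 0$ is fixed, and $f^{*}L$ is nef as a pullback of a nef bundle, so the pair $(M, f^{*}L)$ is a pair of nef classes and the sequence $(s_j)$ is log-concave. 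Log-concavity with $s_0$ fixed gives $s_j \le s_0 (s_1/s_0)^{j}$, so if $s_1 = \int_Y c_1(M)^{d-1} c_1(f^{*}L)$ were bounded along the family, only finitely many Hilbert polynomials could occur, contradicting non-finite-typeness. Hence there is a sequence $f_n \colon Y \to X$, none factoring through $\Delta$, with $\int_Y c_1(M)^{d-1} c_1(f_n^{*}L) \to \infty$. By the complete-intersection reduction of Section~\ref{sec:inter} (item (1) of the operations, with $d-1$ factors equal to $M$), for any smooth curve $C \subset Y$ cut out by $d-1$ general members of $|M|$ one has $\int_C c_1(f_n^{*}L) = \int_Y c_1(M)^{d-1} c_1(f_n^{*}L)$, and this number is independent of the chosen sections by constancy of intersection numbers in flat families; in particular it still tends to infinity.

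The decisive step is to produce one curve $C$ that works for \emph{all} $n$ simultaneously. I would parametrize complete-intersection curves by an irreducible $\KK$-variety $B$, namely an open subscheme of $(\PP H^{0}(Y,M))^{d-1}$. Two kinds of loci are to be avoided: the locus $B_{\mathrm{sing}} \subsetneq B$ of non-smooth complete intersections, which is a proper closed subset by Bertini's theorem in characteristic zero together with normality of $Y$; and, for each $n$, the locus $B_n \subsetneq B$ of curves contained in $f_n^{-1}(\Delta)$, which is proper closed because $f_n(Y) \not\subset \Delta$ makes $f_n^{-1}(\Delta)$ a proper closed subset of $Y$, not containing a general complete-intersection curve. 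Since $\KK$ is \emph{uncountable} and algebraically closed, the irreducible variety $B$ is not the union of the countably many proper closed subsets $B_{\mathrm{sing}} \cup \bigcup_n B_n$, so there is a $\KK$-point outside all of them. The associated curve $C$ is smooth (of some fixed genus $g$), and $f_n(C) \not\subset \Delta$ for every $n$; thus $f_n|_C \colon C \to X$ are maps not factoring through $\Delta$ with $\int_C c_1((f_n|_C)^{*}L) \to \infty$. This shows $\un{\Hom}_{\KK}(C,X) \setminus \un{\Hom}_{\KK}(C,\Delta)$ is not of finite type, contradicting $1$-boundedness modulo $\Delta$ for $C$, and completes the proof.

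I expect the main obstacle to be precisely this last paragraph: verifying that each ``bad'' locus in $B$ is genuinely a \emph{proper} closed subset (so that the uncountability count applies), and that restriction to a general complete-intersection curve simultaneously preserves ``not factoring through $\Delta$'' and the growth of the degree. The Khovanskii--Teissier input handles the degree growth cleanly, so the remaining subtlety is the geometric-genericity bookkeeping for the $\Delta$-condition across the countable family $\{f_n\}$.
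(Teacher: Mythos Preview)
The paper does not give its own proof of this lemma: it is simply quoted from \cite[Lemma 9.9]{vanbommel2019boundedness} with no argument supplied. So there is no in-paper proof to compare against. Your proposal is the standard argument for this fact and is essentially correct; it is in fact very close in spirit to what the paper does later in Proposition~\ref{proposition_quasi_projectivity_of_H} (Bertini to a complete-intersection curve, Khovanskii--Teissier to control the Hilbert polynomial), only run in the contrapositive direction and with the extra uncountability step to handle the countable family $\{f_n\}$ simultaneously.

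Two small points deserve tightening. First, when you invoke Bertini on the normal variety $Y$: Bertini does not directly give a smooth section of a merely normal variety. What you want is that the singular locus $Y_{\mathrm{sing}}$ has codimension $\ge 2$, so a general complete intersection of $d-1$ members of $|M|$ misses $Y_{\mathrm{sing}}$ entirely, and then Bertini on the smooth locus $Y_{\mathrm{sm}}$ gives smoothness of $C$. Second, you assert each $B_n$ is a \emph{proper closed} subset of $B$, but you only argue properness, not closedness. The clean way is a dimension count: the incidence variety $\cI_Z=\{(y,b): y\in C_b\cap f_n^{-1}(\Delta)\}$ has dimension at most $\dim B$ (fibering over $f_n^{-1}(\Delta)$, whose dimension is $\le d-1$, with fibres of codimension $d-1$ in $B$), while over $b\in B_n$ the fibre is all of $C_b$, which is $1$-dimensional; hence $\dim \overline{B_n}\le \dim B-1$. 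With these two refinements the argument is complete.
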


Moreover, similarly as in the non-relative case, it is true that boundedness modulo $\Delta$ implies existence of a uniform bound on degrees of curves mapped into $X$ with the image not in $\Delta$.
\begin{theorem}~\cite[Theorem 1.26]{vanbommel2019boundedness}~\label{theorem_boundedness_gives_uniform_bound_for_curves}
    Let $X$ be bounded modulo $\Delta$ and fix an ample line bundle $L$ on $X$. Then there exist real numbers $\alpha(g)$ depending on $X, \Delta, L$, such that for all smooth projective curves $C$ over $\KK$ of genus $g$ and for all maps $f:C \to X$ with $f(C) \not\subset \Delta$ we have
    \[ \int_C c_1(f^*L) \leq \alpha(g(C)). \]
\end{theorem}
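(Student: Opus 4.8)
The plan is to fix the genus $g$ and reduce the uniform degree bound to the finite-type hypothesis applied over a bounded family of all genus-$g$ curves, and then to promote the resulting bound at the generic point of that family to a bound valid everywhere by Noetherian induction. First I would produce a single finite-type base carrying all the curves: by Proposition~\ref{proposition_0_def_family_of_flat_morphisms} (taking the subscheme to be empty) together with the standard boundedness of genus-$g$ curves via a fixed pluricanonical embedding into some $\PP^n$, only finitely many of the families $\cX_i \to S_i$ are needed, so after replacing them by their disjoint union I obtain a smooth projective family $\pi \colon \cC \to T$ with $T$ of finite type over $\KK$, such that every smooth projective genus-$g$ curve over $\KK$ occurs as a fiber $\cC_t$.

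Next I would form the relative Hom scheme $\cH := \un{\Hom}_T(\cC, X \times_{\KK} T) \to T$, whose fiber over $t$ is $\un{\Hom}_{\KK}(\cC_t, X)$. The degree $\int_{\cC_t} c_1(f^* L)$ is constant on connected components, by constancy of intersection numbers in flat families, so $\cH = \bigsqcup_e \cH_e$ where $\cH_e$ is the locus of maps of $L$-degree $e$. For a fixed $e$ and fixed $g$ the graphs $\Gamma_f \subset \cC_t \times X$ have bounded Hilbert polynomial, so by Gotzmann's theorem (Remark~\ref{remark_only_fin_many_Hilbert_polynomials_standard}) they lie in finitely many Hilbert-scheme components; hence each $\cH_e \to T$ is of finite type. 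Removing the maps landing in $\Delta$ is a closed condition by Remark~\ref{remark_closed_subscheme_of_codomain_gives_closed_hom_scheme}, so I get an open subscheme $\cH_e^{\not\subset\Delta}$, again of finite type over $T$. Restricting to an irreducible component of $T$ with generic point $\eta$, the geometric generic fiber is a single smooth projective genus-$g$ curve over $\Omega := \overline{\KK(\eta)}$; since boundedness modulo $\Delta$ is geometric (by the arguments of Remark~\ref{remark_many_hyperbolicity_assumptions_are_geometric}, see also \cite{vanbommel2019boundedness}), the hypothesis applies to $X_\Omega$ over $\Omega$ and the non-$\Delta$ Hom scheme of this curve is of finite type. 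Therefore only finitely many degrees, say $e \le N$, occur over $\overline{\eta}$, and for every $e > N$ the finite-type scheme $\cH_e^{\not\subset\Delta} \to T$ fails to dominate $T$.

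The hard part will be upgrading the generic bound $N$ to a bound over all of $T$. One cannot simply shrink $T$ to kill the high-degree strata at once: for each $e > N$ the image of $\cH_e^{\not\subset\Delta}$ lies in a proper closed subset $Z_e \subsetneq T$, but there are infinitely many $e$, and $\bigcup_{e > N} Z_e$ may well be dense, so generic finite-typeness of $\bigsqcup_{e} \cH_e^{\not\subset\Delta}$ does \emph{not} spread to an open neighborhood of $\eta$. This reconciliation of the infinitely many degree strata with a single uniform bound is, I expect, the technical heart of the argument. The way to resolve it is to use the full strength of the hypothesis, namely finite-typeness of $\un{\Hom}_{\KK}(Y,X) \setminus \un{\Hom}_{\KK}(Y,\Delta)$ for \emph{every} normal projective $Y$, not merely for curves: run Noetherian induction on $T$, where over the generic point of each component the bound $N$ holds, and a hypothetical sequence of non-$\Delta$ maps of unbounded degree is organized, after passing to a subfamily over a positive-dimensional base $B \subseteq T$, into maps out of a normal projective model of $\cC \times_T B$, to which the boundedness hypothesis applies and contradicts the unboundedness of degrees; the residual locus is a proper closed subscheme handled by the inductive hypothesis. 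It is precisely at this step that the normal-projective (rather than curve-only) formulation of boundedness modulo $\Delta$ is indispensable, and I would organize the induction so that each stratum of $T$ contributes its own finite degree bound, with the global $\alpha(g)$ taken as the maximum over the finitely many strata produced by Noetherian induction.
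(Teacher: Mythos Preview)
The paper does not prove this theorem; it is quoted as a black box from \cite[Theorem~1.26]{vanbommel2019boundedness} and then invoked in the proof of Proposition~\ref{proposition_quasi_projectivity_of_H}. There is therefore no argument in the paper to compare your attempt against.

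That said, your outline has a genuine gap precisely at the step you yourself flag as the heart of the matter. You propose that a hypothetical sequence of non-$\Delta$ maps $f_e \colon C_{t_e} \to X$ of unbounded degree can be ``organized \dots\ into maps out of a normal projective model of $\cC \times_T B$, to which the boundedness hypothesis applies.'' But each $f_e$ is a map from a \emph{fiber} of $\cC \times_T B \to B$, not from the total space, and you give no mechanism for promoting this collection to morphisms $Y \to X$ from a single fixed normal projective $Y$. Without such a promotion, the hypothesis that $\un{\Hom}_{\KK}(Y,X) \setminus \un{\Hom}_{\KK}(Y,\Delta)$ is of finite type says nothing whatsoever about the $f_e$. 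One might try to use the universal evaluation morphism on each component $\cH_e^{\not\subset\Delta}$, which does produce a map from (a normal projective compactification of) $\cC \times_T \cH_e^{\not\subset\Delta}$ to $X$; but then the domain varies with $e$, so one is back to infinitely many $Y$'s rather than one, and the finite-type conclusion for each individual $Y$ gives no uniform control across $e$. Your Noetherian induction on $T$ is the right scaffolding, and your diagnosis that the curve-only hypothesis is insufficient is correct, but the bridge from ``maps out of fibers'' to ``maps out of a fixed normal projective variety'' is exactly where the proof in \cite{vanbommel2019boundedness} does its real work, and your sketch does not yet supply it.
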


Now we present a result that connects ampleness of the cotangent bundle to the rigidity conditions introduced above.

\begin{theorem}{(Kobayashi's theorem~\cite{Kobayashi_theorem}, \cite[Theorem 6.3.26]{Lazarsfeld_positivity_II})}
    Assume that $X$ is a smooth variety with an ample cotangent bundle $\Omega_X$. Then $X$ is algebraically hyperbolic. Moreover, if $\KK=\CC$, then $X$ is Brody hyperbolic.
\end{theorem}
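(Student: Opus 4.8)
The plan is to establish the two conclusions separately, deducing algebraic hyperbolicity via an intersection-theoretic inequality and Brody hyperbolicity via a complex-analytic argument. The common thread is that ampleness of $\Omega_X$ forces any curve mapping into $X$ to carry enough positivity in its own cotangent bundle, which bounds its geometry.

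For the algebraic hyperbolicity, I would let $C$ be a smooth projective curve over $\KK$ and $f : C \to X$ a finite (hence non-constant) map. The key observation is that differentiating $f$ gives a nonzero bundle map $f^* \Omega_X \to \Omega_C$, and since $f$ is finite this map is generically surjective onto the line bundle $\Omega_C$; dualizing, one gets an injection of $T_C = \Omega_C^\vee$ into $f^* T_X$, equivalently a lift $C \to \PP(\Omega_X)$ (the projectivized cotangent bundle) induced by the derivative. Under this lift, the tautological line bundle $\cO_{\PP(\Omega_X)}(1)$ pulls back to $\Omega_C$ (up to the twist coming from the convention for $\PP(E)$). Since $\Omega_X$ is ample, $\cO_{\PP(\Omega_X)}(1)$ is an ample line bundle on $\PP(\Omega_X)$, and comparing it with the pullback of an ample $L$ from $X$ on the image curve yields a comparison of the shape
\[
    \int_C c_1(f^* L) \;\leq\; a \cdot \deg \Omega_C + b \;=\; a\,(2g(C) - 2) + b,
\]
for constants $a, b \geq 0$ depending only on $X$ and $L$ (extracted from expressing $c_1(f^*L)$ in terms of $c_1(\cO_{\PP(\Omega_X)}(1))$ pulled back along the lift). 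Absorbing the $-2a$ into $b$ gives exactly the definition of algebraic hyperbolicity.

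For the Brody hyperbolicity statement over $\CC$, I would argue by contradiction: suppose $f : \CC \to X$ is a non-constant holomorphic map. The ampleness of $\Omega_X$ means the associated Finsler pseudometric on $X$ (the one induced by a positively-curved metric on $\cO_{\PP(\Omega_X)}(1)$) is in fact a genuine hermitian-type metric with strictly negative holomorphic sectional curvature bounded away from zero on the compact manifold $X$. Pulling this metric back along $f$ and applying the Ahlfors–Schwarz lemma forces the pullback pseudometric on $\CC$ to be dominated by a metric of curvature $\leq -c < 0$; but $\CC$ carries no such metric (the Ahlfors–Schwarz comparison against the flat metric on $\CC$ of arbitrarily large disks shows the pullback metric must vanish), forcing $df \equiv 0$ and hence $f$ constant, a contradiction.

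The main obstacle I expect is the algebraic side: making the derivative lift $C \to \PP(\Omega_X)$ precise and correctly tracking the twist in the pullback of $\cO_{\PP(\Omega_X)}(1)$, so that the ampleness of $\Omega_X$ translates into the linear-in-genus bound with the right sign. In particular one must handle the possibility that $f^* \Omega_X \to \Omega_C$ fails to be surjective at finitely many ramification points, which is why the finiteness hypothesis on $f$ is used; the inequality should be read on a suitable Zariski-open locus and then extended by continuity of the intersection numbers. The analytic half is comparatively standard once one knows ampleness yields negative holomorphic sectional curvature, so I would cite the curvature computation and the Ahlfors–Schwarz lemma rather than reproving them.
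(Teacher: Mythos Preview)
The paper does not prove this theorem: it is stated with citations to Kobayashi's original paper and to \cite[Theorem 6.3.26]{Lazarsfeld_positivity_II}, and no proof is given. Your sketch is essentially the standard argument one finds in those references, so there is nothing to compare against in the paper itself.

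That said, your outline is correct in spirit and only needs minor tightening. For the algebraic half, the cleanest way to organise it is: the (generically surjective) differential $f^*\Omega_X \to \Omega_C$ has image a line bundle quotient $L$ of $f^*\Omega_X$ (torsion-free on a curve is locally free), with $\deg L \le \deg \Omega_C = 2g(C)-2$; this quotient defines the lift $\tilde f : C \to \PP(\Omega_X)$ with $\tilde f^*\cO_{\PP(\Omega_X)}(1) \cong L$. Ampleness of $\cO_{\PP(\Omega_X)}(1)$ immediately gives $\deg L > 0$, so $g(C)\ge 2$, and choosing $m$ with $m\,\cO_{\PP(\Omega_X)}(1) - \pi^*L$ nef yields $\int_C c_1(f^*L) \le m(2g(C)-2)$. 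This avoids any separate ``extension by continuity'' step at ramification points: you never need $f^*\Omega_X \to \Omega_C$ to be surjective, only to have line-bundle image. For the analytic half your Ahlfors--Schwarz argument is exactly the classical one.
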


The other implication does not hold and one can conclude it for example from the following result originally proved by Kalka, Shiffman and Wong in \cite[Corollary 4]{101307mmj1029002559} in the analytic setting.

\begin{theorem}~\cite[Corollary 6.3.30]{Lazarsfeld_positivity_II}~\label{theorem_ample_cotangent_implies_indecomposability}
    Assume that $X$ is a smooth variety with an ample cotangent bundle $\Omega_X$. Let $Y$ be an integral projective variety. Then the scheme $\un{\Hom}_{\KK}^{\nc}(Y,X)$ is finite.
\end{theorem}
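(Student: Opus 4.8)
The plan is to establish two facts about the non-constant locus $H := \un{\Hom}_{\KK}^{\nc}(Y,X)$: that it is everywhere $0$-dimensional (rigidity of non-constant maps), and that it is of finite type over $\KK$. A finite-type $\KK$-scheme of dimension $0$ is Artinian, hence finite, so these two facts together give the claim.

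For rigidity, recall that the Zariski tangent space to $\un{\Hom}_{\KK}(Y,X)$ at a point $[f]$ is $H^0(Y, f^* T_X)$, where $T_X = \Omega_X^\vee$ is the tangent bundle. I would show this space vanishes for every non-constant $f$, which forces $[f]$ to be a reduced isolated point. Suppose $0 \neq v \in H^0(Y, f^* T_X)$ and pick a point $p$ in the dense open locus where $v$ does not vanish. Since $f$ is non-constant, a general complete intersection curve $C \subset Y$ through $p$ (smooth after passing to its normalization) is not contracted by $f$, so $f|_C : C \to X$ is finite. Because the pullback of an ample vector bundle along a finite morphism is again ample, $(f|_C)^*\Omega_X$ is ample on $C$, and therefore its dual $(f|_C)^* T_X$ has no nonzero global sections (every quotient line bundle of an ample bundle on a curve has positive degree). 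Hence $v|_C = 0$, contradicting $v(p) \neq 0$. This proves $H^0(Y, f^* T_X) = 0$ and hence that $H$ is $0$-dimensional.

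For finite type, the point is to bound the Hilbert polynomial of the graph $\Gamma_f \subset Y \times X$ (recall that $\un{\Hom}_{\KK}(Y,X)$ embeds as an open subscheme of $\un{\Hilb}_{\KK}(Y \times X)$ via $f \mapsto \Gamma_f$). Fix an ample $L$ on $X$ and an ample $\mathcal{O}_Y(1)$, and polarize $Y \times X$ accordingly; since $\Gamma_f \cong Y$, its Hilbert polynomial is governed by the intersection numbers $s_i := \int_Y c_1(f^*L)^i\, c_1(\mathcal{O}_Y(1))^{d-i}$ for $d = \dim Y$. By Kobayashi's theorem $X$ is algebraically hyperbolic, so restricting $f$ to the curve $C$ cut out by $d-1$ general members of $|\mathcal{O}_Y(1)|$ bounds $s_1 = \int_C c_1(f^*L)$ independently of $f$ (the genus of $C$ being fixed once $Y$ and $\mathcal{O}_Y(1)$ are fixed); the quantity $s_0 = \deg_{\mathcal{O}_Y(1)} Y$ is of course fixed. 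Now $f^*L$ is nef and $\mathcal{O}_Y(1)$ is ample, so the Khovanskii–Teissier inequalities (Remark~\ref{Khovanskii_Teissier_inequalities}) give $s_i^2 \geq s_{i-1} s_{i+1}$; feeding in the bounds on $s_0, s_1$ propagates inductively to a uniform bound on every $s_i$, hence on the Hilbert polynomial of $\Gamma_f$. By Remark~\ref{remark_only_fin_many_Hilbert_polynomials_standard} only finitely many Hilbert polynomials then occur, so the graphs of non-constant maps lie in finitely many quasi-projective components of the Hilbert scheme, and $H$ is of finite type.

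The main obstacle is the finite-type step, and specifically the passage from a bound on the degrees of image curves (all that algebraic hyperbolicity directly supplies) to a bound on the full Hilbert polynomial of the graph. The Khovanskii–Teissier log-concavity is exactly the tool that converts the single curve bound $s_1$ together with $s_0$ into control of the higher self-intersections $s_i$; one must only check the mild positivity hypotheses ($\mathcal{O}_Y(1)$ ample, $f^*L$ nef, $s_0 \geq 1$) that make the inductive estimate run. The rigidity step, by contrast, is a clean Bertini-plus-ampleness argument once the tangent space is identified with $H^0(Y, f^*T_X)$.
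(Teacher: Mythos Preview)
The paper does not supply its own proof of this statement; it is quoted verbatim from Lazarsfeld \cite[Corollary~6.3.30]{Lazarsfeld_positivity_II}. Your two-step argument (vanishing of $H^0(Y,f^*T_X)$ for rigidity, then a curve bound plus Khovanskii--Teissier for finite type) is correct and is exactly the route taken in Lazarsfeld: the finite-type step is his Theorem~6.3.29, whose mechanism the present paper reproduces almost word for word in the proof of Proposition~\ref{proposition_quasi_projectivity_of_H} (cut to a curve by Bertini, bound $s_1$ via hyperbolicity, then invoke Remark~\ref{Khovanskii_Teissier_inequalities}); the rigidity step is the passage from 6.3.29 to 6.3.30. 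So there is nothing to contrast --- your proposal and the cited source agree.

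Two small remarks on execution. First, since $Y$ is only assumed integral, the Bertini curve $C$ need not be smooth; your parenthetical ``after normalization'' is doing real work, and you should say explicitly that sections pull back injectively along the finite surjection $\widetilde{C}\to C$, so $H^0(\widetilde{C},(f\circ\nu)^*T_X)=0$ already kills $v|_C$. Second, in the log-concavity induction you silently divide by $s_{i-1}$; it is worth noting once that nefness gives $s_i\geq 0$, so if some $s_j=0$ then all higher $s_i$ vanish and the bound is trivial, while otherwise $s_i/s_{i-1}\leq s_1/s_0$ yields $s_i\leq s_1^i/s_0^{i-1}$ directly.
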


So in particular, if $C$ is a curve of genus greater than one, then $C \times C$ is algebraically hyperbolic (even Brody hyperbolic if $\KK=\CC$), but does not have ample cotangent bundle. Indeed, note that for each $a \in C(\KK)$ there is a morphism $C \to C \times C$ given by $b \mapsto (b, a)$, hence the scheme $\un{\Hom}_{\KK}^{\nc}(C,C \times C)$ is infinite.

Note that from Theorem~\ref{theorem_ample_cotangent_implies_indecomposability} it follows that smooth varieties with ample cotangent line bundles are indecomposable. We also observe the following.

\begin{lemma}~\label{lemma_indecomposable_is_groupless}
    Assume that $X$ is indecomposable. Then $X$ is groupless.
\end{lemma}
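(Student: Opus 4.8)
The plan is to reduce to abelian varieties via the characterization of grouplessness recorded in Remark~\ref{remark_equivalent_conditions_on_boundedness_etc}: a smooth projective variety $X$ is groupless if and only if every morphism $A \to X$ from an abelian variety $A$ over $\KK$ is constant. So it suffices to fix an abelian variety $A$ together with a morphism $f : A \to X$ and to deduce, using only indecomposability, that $f$ is constant.

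The key step is to feed the group law of $A$ into the indecomposability hypothesis. Let $m : A \times_{\KK} A \to A$ be the addition morphism and consider the composite $f \circ m : A \times_{\KK} A \to X$. An abelian variety is an integral, smooth, projective variety, so $A \times_{\KK} A$ is precisely the type of product appearing in the definition of indecomposability. Therefore $f \circ m$ must factor through one of the two projections $p_1, p_2 : A \times_{\KK} A \to A$.

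Suppose $f \circ m = g \circ p_1$ for some $g : A \to X$. To extract constancy scheme-theoretically, precompose with the section $i_2 : A \to A \times_{\KK} A$, $a \mapsto (0,a)$: one has $m \circ i_2 = \id_A$, while $p_1 \circ i_2$ is the constant morphism at the identity $0 \in A(\KK)$. Hence $f = f \circ m \circ i_2 = g \circ p_1 \circ i_2$ is constant. The case $f \circ m = g \circ p_2$ is symmetric, using instead the section $a \mapsto (a,0)$. Either way $f$ is constant, as required.

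I do not anticipate a genuine obstacle here; the whole content is the observation that an abelian variety is ``decomposable'' through its own addition map, so a variety admitting no nontrivial decompositions can receive only constant maps from it. The only points meriting care are that ``factoring through $X_i$'' in the definition of indecomposability means factoring through the corresponding projection, and that abelian varieties do satisfy the integrality, smoothness and projectivity hypotheses needed to invoke that definition.
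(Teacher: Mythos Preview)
Your proof is correct and follows the same idea as the paper: compose $f$ with the addition map $m:A\times_{\KK}A\to A$ and invoke indecomposability on the resulting map $A\times_{\KK}A\to X$. The paper phrases it as a one-line contradiction (if $f$ is non-constant then $f\circ m$ cannot factor through either projection), whereas you argue directly and spell out, via the section $a\mapsto(0,a)$, why factoring through a projection forces $f$ to be constant; this is exactly the step the paper leaves implicit.
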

\begin{proof}
    Assume for a contradiction that there is a non-constant map $A \to X$ from an abelian variety $A$ over $\KK$. If we compose this map with addition $A \times_{\KK} A \to A$ we get a map
    \[ A \times_{\KK} A \to X \]
    which does not factor through any projection to $A$.
\end{proof}

\begin{remark}
    By the above lemma, an indecomposable variety is pure. Thus, in the definition of indecomposability we could equivalently impose, that all rational maps $X_1 \times X_2 \dashrightarrow X$ factor through $X_1$ or $X_2$.
\end{remark}

Motivated by the conjectural equivalence of conditions $(1)-(5)$ from Definition~\ref{definition_hyperbolicity_notions_for_a_variety} we ask the following.
\begin{question}
    Assume that $X$ is indecomposable. Is $X$ bounded?
\end{question}

Note that this question has a positive answer in dimension one, where indecomposability and all other hyperbolicity properties are equivalent to having genus at least two. For some partial results in arbitrary dimension see \cite{vanbommel2019boundedness}.

A plenty of examples of smooth varieties with ample cotangent bundle is provided by the following theorem proved by Brotbek and Darondeau.
\begin{theorem}~\cite[Theorem 0.1]{complete_int_have_ample_cotangent}~\label{theorem_complete_int_have_ample_cotangent}
    Let $M$ be an $N$-dimensional smooth projective variety over $\KK$ with a very ample line bundle $\cO_M(1)$. Pick an integer $N/2 \leq c \leq N$ and $\delta \in (\ZZ_{>0})^c$. Then there exists $\nu(\delta) \in \QQ$ such that for all multi-degrees $(d_1,\dots, d_c) = \nu \cdot (\delta_1,\dots,\delta_c)$ with $\nu \in \QQ$ such that $\nu \geq \nu(\delta)$, the complete intersection of general hypersurfaces $H_1 \in |\cO_M(d_1)|,\dots, H_c \in |\cO_M(d_c)|$ has ample cotangent bundle.
\end{theorem}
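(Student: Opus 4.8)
The statement is precisely \cite[Theorem 0.1]{complete_int_have_ample_cotangent}, a deep result whose full proof I will not reproduce; I only sketch the strategy one would follow. By definition, ampleness of $\Omega_X$ for the complete intersection $X = H_1 \cap \dots \cap H_c$ means ampleness of the tautological line bundle $\cO_{\PP(\Omega_X)}(1)$ on $\pi \colon \PP(\Omega_X) \to X$, so the entire problem is to produce enough positivity for this single line bundle. The first reduction I would make exploits that ampleness of the cotangent bundle is an \emph{open} condition in smooth projective families: the smooth complete intersections of a fixed multidegree $(d_1,\dots,d_c)$ form a family over an open subset of $\prod_i |\cO_M(d_i)|$, and over that locus $\Omega_{\cX/S}$ is a vector bundle, so the relative tautological bundle on $\PP(\Omega_{\cX/S})$ is a line bundle on a proper $S$-scheme. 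The set of base points where its fibre is ample is then Zariski open (openness of ampleness in families). Hence it suffices, for each admissible multidegree, to exhibit \emph{one} smooth member whose cotangent bundle is ample; ``general'' follows automatically.

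The heart of the argument is the construction, on a carefully chosen complete intersection with special defining equations, of a large supply of global symmetric differential forms, i.e. sections of $S^m \Omega_X \otimes \cO_X(-a)$ for suitable $m \gg 0$ and a fixed $a > 0$. The source of these forms is the conormal exact sequence $0 \to \bigoplus_i \cO_X(-d_i) \to \Omega_M|_X \to \Omega_X \to 0$, in which the sub-bundle is generated by the differentials $d\sigma_i|_X$ of the defining sections $\sigma_i \in H^0(M, \cO_M(d_i))$. Since $\dim X = N - c$ and $c \geq N/2$ forces $N - c \leq c$, there are at least as many conormal relations as the rank of $\Omega_X$; this numerical balance is the reason the bound $c \geq N/2$ is the natural one. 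Following Brotbek's Wronskian method, one assembles these relations into determinantal (``Wronskian'') sections whose positive twist by the very negative conormal directions yields genuine sections of high symmetric powers of $\Omega_X$ twisted down by an ample bundle.

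To pass from ``many sections'' to ampleness I would control the base locus of the system generated by these forms and show it is empty after a further twist. Concretely, one shows that $\cO_{\PP(\Omega_X)}(m) \otimes \pi^* \cO_X(-a)$ has sections whose common zero locus is contained in a proper closed subset of $\PP(\Omega_X)$, and then removes this residual locus either by an inductive argument along a stratification or by varying the special equations within the family and intersecting base loci. This is a Nakamaye-type analysis of the augmented base locus of $\cO_{\PP(\Omega_X)}(1)$: emptiness of that augmented base locus is equivalent to ampleness, which closes the argument. The threshold $\nu(\delta)$ is extracted from the numerical inequalities needed to make the twisting strictly positive and to guarantee nonvanishing of the relevant cohomology along the way.

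The main obstacle is unquestionably the middle step: producing enough everywhere-independent symmetric differentials with an effectively controlled base locus. A crude count of conormal directions does not suffice, and the genuine content of Brotbek--Darondeau is the explicit choice of special equations (of a Fermat/product type) together with the Wronskian construction that makes the resulting forms simultaneously numerous and positive, plus the verification that for $c \geq N/2$ their joint zero locus in $\PP(\Omega_X)$ actually vanishes. By comparison, the openness reduction and the passage from sections to ampleness are essentially formal.
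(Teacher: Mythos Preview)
The paper does not give its own proof of this theorem: it is quoted verbatim as \cite[Theorem~0.1]{complete_int_have_ample_cotangent} and used as a black box to supply examples. So there is nothing in the paper to compare your proposal against. Your sketch is a fair high-level summary of the Brotbek--Darondeau strategy (openness reduction, Wronskian construction of twisted symmetric differentials from the conormal sequence, base-locus control on $\PP(\Omega_X)$), and you correctly flag that the substantive content lies in the explicit construction and base-locus analysis rather than in the formal reductions. For the purposes of this paper nothing more is expected than the citation.
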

For some other examples of varieties with ample cotangent bundle, see \cite[Section 6.3]{Lazarsfeld_positivity_II}. Let us also note that strong positivity assumptions on $\Omega_X$ can also yield finiteness results like the following.
\begin{theorem}~\cite{Moriwaki1994RemarksOR}
    Let $X$ be a smooth variety over a number field $\KK_0$ such that $\Omega_X$ is ample and globally generated (i.e., the canonical map $H^0(X, \Omega_X) \otimes_{\KK_0} \cO_X \to \Omega_X$ is surjective). Then $X(\KK_0)$ is finite.
\end{theorem}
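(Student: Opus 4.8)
The plan is to factor $X$ through its Albanese variety, use global generation to make this map finite and unramified, use ampleness to rule out abelian subvarieties in the image, and then invoke Faltings' theorem. Throughout I treat $X$ as smooth projective, and I carry out the geometric steps over $\overline{\KK_0}$, which is harmless because both hypotheses are geometric (Remark~\ref{remark_many_hyperbolicity_assumptions_are_geometric}) and $X(\KK_0) \subseteq X(\overline{\KK_0})$.

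First I would show that global generation forces the Albanese morphism $a : X \to A := \mathrm{Alb}(X)$ to be finite and unramified. In characteristic zero pullback of $1$-forms is an isomorphism $a^{*} : H^{0}(A,\Omega_{A}) \xrightarrow{\sim} H^{0}(X,\Omega_{X})$, and $\Omega_{A}$ is trivial, so the evaluation map $H^{0}(X,\Omega_{X}) \otimes \cO_{X} \to \Omega_{X}$ factors, up to this isomorphism, through $a^{*}\Omega_{A} \to \Omega_{X}$. Hence global generation of $\Omega_{X}$ is exactly surjectivity of $a^{*}\Omega_{A} \to \Omega_{X}$, i.e. $\Omega_{X/A} = 0$, so $a$ is unramified. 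An unramified morphism is quasi-finite, and $a$ is proper since $X$ is projective; therefore $a$ is finite onto its image $Y := a(X)$, a closed subvariety of $A$ with $\dim Y = \dim X$.

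Next I would use ampleness to prove that $Y$ contains no translate $y + B$ of a positive-dimensional abelian subvariety $B \subseteq A$. Suppose one exists and put $m := \dim B \geq 1$. As $a$ is finite and $y+B \subseteq a(X)$, some irreducible component $Z$ of $a^{-1}(y+B)$ maps finitely and surjectively onto $y+B$, so $\dim Z = m$. Feeding $\Omega_{X/A} = 0$ and $\Omega_{Z/X} = 0$ (closed immersion) into the cotangent sequences of $Z \hookrightarrow X \to A$ yields $\Omega_{Z/A} = 0$, hence $\Omega_{Z/B} = 0$; equivalently the trivial bundle $(a|_{Z})^{*}\Omega_{B} = \cO_{Z}^{\oplus m}$ surjects onto $\Omega_{Z}$. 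By Bertini choose a smooth projective curve $C$ in the smooth locus of $Z$. Restricting, the surjection $\cO_{C}^{\oplus m} \twoheadrightarrow \Omega_{Z}|_{C}$ is a surjection of vector bundles of equal rank $m$ on a curve, hence an isomorphism, so $\Omega_{Z}|_{C}$ is trivial. On the other hand the conormal sequence gives a surjection $\Omega_{X}|_{Z} \twoheadrightarrow \Omega_{Z}$, and $\Omega_{X}|_{C}$ is ample (restriction of the ample bundle $\Omega_{X}$ to $C \subseteq X$), so its quotient $\Omega_{Z}|_{C}$ is ample; a trivial bundle of positive rank on a projective curve is not ample, a contradiction. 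Finally, by Faltings' theorem on rational points of subvarieties of abelian varieties \cite{Faltings}, the absence of positive-dimensional translated abelian subvarieties in $Y$ gives that $Y(\KK_{0})$ is finite; since $a$ is finite and defined over $\KK_{0}$, its preimage $a^{-1}(Y(\KK_{0}))$ is a finite set containing $X(\KK_{0})$, so $X(\KK_{0})$ is finite.

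The main obstacle is the second step. Restricting the surjection $a^{*}\Omega_{A} \to \Omega_{X}$ directly to a curve only shows $\Omega_{X}$ is globally generated there, which is perfectly compatible with ampleness (think of $\cO(1)$ on $\PP^{1}$), so no contradiction is visible at that level. The point that breaks the symmetry is to pass to the intermediate sheaf $\Omega_{Z}$ on the $m$-dimensional $Z$: it is at once a quotient of a \emph{rank-$m$} trivial bundle (which, after the equal-rank restriction to a curve, forces triviality) and a quotient of the ample $\Omega_{X}|_{Z}$ (which forces ampleness), and these are incompatible. A secondary technical point is that $Z$ may well be singular — its expected and actual dimensions disagree unless $\dim X = \dim A$ — which is exactly why I argue on a curve lying in the smooth locus of $Z$ rather than with $\Omega_{Z}$ as a global sheaf.
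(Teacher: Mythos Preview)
The paper does not give its own proof of this theorem: it is quoted from \cite{Moriwaki1994RemarksOR}, and the surrounding text only remarks that it ``ultimately relies on the following theorem of Faltings'' (Theorem~\ref{theorem_faltings}). Your argument is correct and is precisely the strategy the paper alludes to --- factor through the Albanese, show the image contains no translate of a positive-dimensional abelian subvariety, and invoke Faltings --- so there is nothing to compare beyond noting that you have supplied what the paper deliberately omits.

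One small simplification you missed: your subscheme $Z$ is automatically smooth, so the care taken in your final paragraph is unnecessary. You already established that $Z \to y+B$ is unramified; since $y+B$ is smooth of dimension $m$, the induced surjection on cotangent spaces at each closed point forces $\dim T_zZ \le m = \dim Z$, whence $Z$ is regular everywhere. Then $Z \to y+B$ is finite \'etale (miracle flatness plus unramified), so $\Omega_Z \cong (a|_Z)^*\Omega_{y+B} \cong \cO_Z^{\oplus m}$ is globally trivial, and the contradiction with ampleness of the quotient $\Omega_X|_Z \twoheadrightarrow \Omega_Z$ is already visible on $Z$ itself --- no Bertini curve needed. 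This also dissolves the worry about the $m=1$ case, where your Bertini step would otherwise need to be replaced by a normalization argument.
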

Moreover, Lang's conjecture predicts that in the above theorem global generatedness is redundant, see \cite[Conjecture D]{Moriwaki1994RemarksOR}. The above result ultimately relies on the following theorem of Faltings.
\begin{theorem}~\cite{Faltings}~\label{theorem_faltings}
    Let $X$ be a closed subvariety of an abelian variety $A$ over $\KK$. Then there is a closed proper subscheme $Y \subset X$ such that $X$ is \textit{Mordellic modulo} $Y$, i.e., for any finitely generated field $\KK_0$ and any model $X_0$ for $X$ over $\KK_0$, the set $X_0(\KK_0) \setminus Y$ is finite.
\end{theorem}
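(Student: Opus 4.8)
The statement is Faltings's resolution of the Lang conjecture for subvarieties of abelian varieties, so the plan is not to reprove the deep Diophantine input but to show how to package it into the form stated, taking $Y$ to be the Ueno--Kawamata special locus. First I would fix a model $A_0$ of $A$ over the finitely generated field $\KK_0$ containing the given model $X_0 \subseteq A_0$, so that every element of $X_0(\KK_0)$ is in particular a $\KK_0$-point of the abelian variety $A_0$. The goal then becomes controlling $X_0(\KK_0) \subseteq A_0(\KK_0)$.

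The arithmetic engine is the Mordell--Lang theorem: over a finitely generated field of characteristic zero, $X_0(\KK_0)$ is contained in a finite union $\bigcup_i (x_i + B_i)$ of translates of abelian subvarieties $B_i \subseteq A_0$ with each $x_i + B_i \subseteq X_0$. For number fields this is Faltings; the extension to arbitrary finitely generated fields follows from the Lang--N\'eron theorem (finite generation of the Mordell--Weil group modulo the trace) together with a specialization argument reducing back to the number-field case.

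With this in hand, I would define $Y \subseteq X$ to be the union of all positive-dimensional translates $x + B$ of abelian subvarieties of $A$ that are contained in $X$. By the Ueno--Kawamata structure theorem this special locus is Zariski closed, and it is a \emph{proper} subscheme precisely when $X$ is not itself a translate of a positive-dimensional abelian subvariety (in the hyperbolic applications of interest $X$ is groupless, so that in fact $Y = \varnothing$). Granting this, the translates $x_i + B_i$ produced by Mordell--Lang split into the positive-dimensional ones, which are absorbed into $Y$, and the finitely many zero-dimensional ones, which are isolated $\KK_0$-points; hence $X_0(\KK_0) \setminus Y$ is finite, as required.

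The main obstacle is of course the Mordell--Lang input itself, whose proof (Vojta's method: the Faltings--Vojta height inequalities, the product theorem, and Arakelov intersection theory) lies far outside the scope of a sketch and is imported wholesale from \cite{Faltings}. The only remaining genuinely geometric point is the closedness and properness of the special locus $Y$, which is exactly what the Ueno--Kawamata structure theory supplies.
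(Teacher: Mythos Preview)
The paper does not prove this statement at all; it is quoted with the citation \cite{Faltings} and immediately used as a black box in the subsequent corollary. There is therefore nothing in the paper to compare your sketch against.

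For what it is worth, your outline is the standard and correct way to unpack the statement: Faltings's Mordell--Lang theorem supplies the finite union of translates containing the rational points, the Ueno--Kawamata structure theorem gives closedness of the special locus $Y$, and Lang--N\'eron handles the passage from number fields to arbitrary finitely generated fields. You also correctly flag an imprecision the paper glosses over: if $X$ is itself a translate of a positive-dimensional abelian subvariety then the special locus is all of $X$ and no \emph{proper} $Y$ exists. The paper's statement tacitly excludes this case, which is harmless for the intended application where $X$ has ample cotangent bundle and is therefore groupless, so that in fact $Y=\varnothing$.
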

In particular, using this theorem we can get plenty of examples of varieties that we will later see to be excludable.
\begin{corollary}~\label{corollary_random_subvarieties_of_abelian_varieties_provide_examples}
    Let $A$ be an abelian variety over $\KK$ and let $X$ by obtained by a sufficiently general intersection of ample line bundles as in Theorem~\ref{theorem_complete_int_have_ample_cotangent}. Then there is a finitely generated subfield $\KK_0 \subset \KK$, a $\KK_0$-model $X_0$ of $X$ and a closed proper subscheme $Y_0 \subset X_0$, such that $(X_0 \setminus Y_0)(\KK_0)$ is empty, and the cotangent bundle of $X_0$ is ample. Moreover, we can assume that $X_0$ is geometrically integral and smooth.
\end{corollary}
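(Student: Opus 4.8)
The plan is to combine the three geometric inputs already available over $\KK$---Bertini's theorem, the Brotbek--Darondeau Theorem~\ref{theorem_complete_int_have_ample_cotangent}, and Faltings' Theorem~\ref{theorem_faltings}---and then to descend everything to a finitely generated subfield by a standard spreading-out argument, upgrading Faltings' finiteness statement to the required emptiness at the very end.

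First, working over $\KK$, I would record the geometric properties of $X$. A sufficiently general complete intersection of the type in Theorem~\ref{theorem_complete_int_have_ample_cotangent} is smooth and, being a positive-dimensional complete intersection of ample divisors, geometrically integral by Bertini-type theorems; here I assume $\dim X \geq 1$, which is possible whenever $\dim A \geq 2$ since one may take $c \leq N-1$. Theorem~\ref{theorem_complete_int_have_ample_cotangent} gives that $\Omega_X$ is ample. Viewing $X$ as a closed subvariety of $A$ and applying Theorem~\ref{theorem_faltings}, I obtain a proper closed subscheme $Y \subsetneq X$ such that $X$ is Mordellic modulo $Y$.

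Next I would descend to a finitely generated field. Since $X$, $Y$, and the ample line bundle on $X$ are cut out by finitely many equations with finitely many coefficients from $\KK$, there is a finitely generated subfield $\KK_0 \subset \KK$ together with models $X_0$ and $Y_0 \subset X_0$ over $\KK_0$ whose base change along $\Spec(\KK) \to \Spec(\KK_0)$ recovers $Y \subset X$. After enlarging $\KK_0$ if necessary, standard spreading-out of schemes (limit arguments, e.g.\ in \cite{stacks-project}) lets me assume that $X_0$ is smooth and geometrically integral over $\KK_0$ and that $Y_0 \subsetneq X_0$ remains a proper closed subscheme. For the cotangent bundle, smoothness of $X_0 \to \Spec(\KK_0)$ gives $\Omega_{X_0/\KK_0} \otimes_{\KK_0} \KK \simeq \Omega_{X/\KK}$, so $\PP(\Omega_{X_0})$ is proper over $\KK_0$ with $\cO(1)$ pulling back to $\cO(1)$ on $\PP(\Omega_X)$; since ampleness of a line bundle on a proper scheme is insensitive to extension of the base field, ampleness of $\Omega_X$ forces ampleness of $\Omega_{X_0}$.

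Finally I would upgrade finiteness to emptiness. Applying the Mordellic-modulo property of $X \subset A$ to the finitely generated field $\KK_0$ and the model $X_0$, the set $X_0(\KK_0) \setminus Y_0$ injects into $X(\KK) \setminus Y(\KK)$ and is therefore finite. Each of these finitely many points is a $\KK_0$-rational, hence closed, point of $X_0$, so their union with $Y_0$ is a closed subscheme $Y_0' \subset X_0$ defined over $\KK_0$; it is still proper because $\dim X_0 \geq 1$. Replacing $Y_0$ by $Y_0'$ yields $(X_0 \setminus Y_0')(\KK_0) = \varnothing$, as required, while the geometric properties of $X_0$ are unaffected. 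I expect the only genuinely delicate points to be the descent of ampleness of $\Omega_{X_0}$ and the bookkeeping of spreading out smoothness and geometric integrality to $\KK_0$; the passage from finiteness to emptiness is then a formal consequence of enlarging $Y_0$ by the finitely many exceptional rational points.
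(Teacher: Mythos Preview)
The paper does not supply a proof of this corollary; it is stated as an immediate consequence of Theorem~\ref{theorem_complete_int_have_ample_cotangent} and Theorem~\ref{theorem_faltings}, and the section ends right after the statement. Your argument fills in precisely the details the authors leave implicit: Bertini and Brotbek--Darondeau over $\KK$ to get smoothness, geometric integrality, and ample $\Omega_X$; Faltings to get the exceptional locus $Y$; spreading out to a finitely generated $\KK_0$; and finally absorbing the finitely many remaining $\KK_0$-points into $Y_0$ to pass from ``finite'' to ``empty''. This is correct and is exactly the intended route, including the points you flag as delicate (descent of ampleness via insensitivity to base-field extension, and the need $\dim X \geq 1$ so that enlarging $Y_0$ keeps it proper).
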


\section{Model companion via Hilbert schemes}~\label{sec:model_comp}

In this section, we fix a characteristic zero field $\Kfield_0$ (not necessarily algebraically closed) and use the notation $\Kfield$ for an extension of the field $\Kfield_0$. Fix a geometrically integral projective variety $\widetilde{V}$ over $\Kfield_0$. Let $V_0 \subset \widetilde{V}$ be a closed proper subscheme such that $\widetilde{V}$ geometrically is bounded modulo $V_0$ and denote $V = \widetilde{V} \setminus V_0$. Moreover, fix a smooth projective morphism between varieties $\cX \to S$ over $\Kfield_0$ with geometrically integral fibers and with a closed subscheme $\cZ \subset \cX$. The below proposition is a variant of \cite[Theorem 6.3.29]{Lazarsfeld_positivity_II} in families.

\begin{proposition}~\label{proposition_quasi_projectivity_of_H}
    There is a quasi-projective morphism $H \to S$, such that for each $s \in S(\Kfield)$ the pullback $H_s$ is isomorphic to $\un{\Hom}_{\Kfield}^{\nc}(\cX_s, \widetilde{V}_{\Kfield}) \setminus \un{\Hom}_{\Kfield}(\cX_s,(V_0)_{\Kfield})$.
\end{proposition}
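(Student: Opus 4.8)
The plan is to exhibit $H$ as an open subscheme of a relative Hom scheme and then prove it is of finite type, and in fact quasi-projective, over $S$ by bounding the Hilbert polynomials of the graphs parametrising its points. First I would form the relative Hom scheme $\un{\Hom}_S(\cX,\widetilde{V}_S)$, with $\widetilde{V}_S:=\widetilde{V}\times_{\Kfield_0}S$, together with its closed constant locus $\un{\Hom}^{\con}_S(\cX,\widetilde{V}_S)$ from Remark~\ref{remark_constant_maps_component_in_the_hom_scheme} and the closed subscheme $\un{\Hom}_S(\cX,(V_0)_S)$ provided by Remark~\ref{remark_closed_subscheme_of_codomain_gives_closed_hom_scheme}. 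Setting
\[
H:=\un{\Hom}^{\nc}_S(\cX,\widetilde{V}_S)\setminus\un{\Hom}_S(\cX,(V_0)_S),
\]
this is open in $\un{\Hom}_S(\cX,\widetilde{V}_S)$, being the complement of the two closed subschemes $\un{\Hom}^{\con}_S(\cX,\widetilde{V}_S)$ and $\un{\Hom}_S(\cX,(V_0)_S)$. The base-change compatibility of Hom schemes (and of the constant and $V_0$-loci) gives $H_s\cong\un{\Hom}^{\nc}_\Kfield(\cX_s,\widetilde{V}_\Kfield)\setminus\un{\Hom}_\Kfield(\cX_s,(V_0)_\Kfield)$ for each $s\in S(\Kfield)$, which is exactly the fiber asserted; so the content lies entirely in showing that $H\to S$ is quasi-projective.

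Via the graph embedding $\un{\Hom}_S(\cX,\widetilde{V}_S)\hookrightarrow\un{\Hilb}_S(\cX\times_S\widetilde{V}_S)$ and the decomposition $\un{\Hilb}_S=\bigsqcup_P\un{\Hilb}^P_S$ into $S$-projective pieces indexed by Hilbert polynomials, $H$ breaks into open pieces $H^P$, each an open subscheme of the quasi-projective $S$-scheme $\un{\Hom}^P_S$; hence it suffices to show that only finitely many $P$ occur among points of $H$. Fixing a relatively very ample $\cO_\cX(1)$ and an ample $L$ on $\widetilde{V}$ and computing with $\cO_\cX(1)\boxtimes L$, the graph $\Gamma_f\cong\cX_s$ of a map $f:\cX_s\to\widetilde{V}$ has Hilbert polynomial of degree $d=\dim\cX_s$ and leading coefficient $\tfrac{1}{d!}\sum_i\binom{d}{i}s_i$, where $s_i=\int_{\cX_s}c_1(\cO_{\cX_s}(1))^i\,c_1(f^*L)^{d-i}$. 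Since $d$ and $s_d$ are constant on connected components of the flat family $\cX\to S$ and $S$ is of finite type, Remark~\ref{remark_only_fin_many_Hilbert_polynomials_standard} reduces the problem to a uniform bound on each $s_i$.

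The crux, and the main obstacle, is to bound $s_{d-1}$ uniformly. As $\cO_{\cX_s}(1)$ is ample and $f^*L$ is nef, the Khovanskii–Teissier inequalities (Remark~\ref{Khovanskii_Teissier_inequalities}) make $(s_i)$ log-concave; together with $s_d$ fixed, this reduces bounding all $s_i$ to bounding $s_{d-1}$ alone (the ratios $s_i/s_{i+1}$ are non-decreasing, hence dominated by $s_{d-1}/s_d$, the cases where some $s_i$ vanish being handled directly). To bound $s_{d-1}$ I would base-change to $\overline{\Kfield}$ and cut $\cX_s$ by $d-1$ general members of $|\cO_{\cX_s}(1)|$; by Bertini in characteristic zero this yields a smooth geometrically integral curve $C$ with $\int_C c_1(f^*L)=s_{d-1}$ and genus $g(C)$ determined by $\cO_{\cX_s}(1)$ and $\cX_s$, hence constant on components of $S$ and taking finitely many values. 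Since $f(\cX_s)\not\subset V_0$ for $f\in H$, a general such $C$ has $f(C)\not\subset V_0$; if $f|_C$ is non-constant, the geometric boundedness of $\widetilde{V}$ modulo $V_0$ and Theorem~\ref{theorem_boundedness_gives_uniform_bound_for_curves} give $\int_C c_1(f^*L)\le\alpha(g(C))$, while if $f|_C$ is constant this number is $0$. As the intersection number is unchanged under base change, we obtain the uniform bound $s_{d-1}\le\alpha(g(C))$. Thus only finitely many $P$ occur, and $H=\bigsqcup_P H^P$ is a finite disjoint union of quasi-projective $S$-schemes, hence quasi-projective over $S$.
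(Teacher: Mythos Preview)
Your proposal is correct and follows essentially the same route as the paper: define $H$ as the open complement of the constant locus and the $V_0$-locus in the relative Hom scheme, embed into the Hilbert scheme, and reduce quasi-projectivity to bounding the leading coefficient of the relevant Hilbert polynomials via Khovanskii--Teissier and a Bertini curve slice, then invoke Theorem~\ref{theorem_boundedness_gives_uniform_bound_for_curves}. The only substantive difference is in how the genus of the Bertini curve $C$ is controlled: you argue that $g(C)$ is determined by adjunction from intersection numbers on $\cX_s$, hence locally constant on $S$, whereas the paper instead appeals to a Castelnuovo-type bound $g\le e^2-2e+1$ (with $e=\int_{\cX_s}c_1(\cL_s)^d$) to avoid tracking the canonical class in the family; both are valid and yield the same conclusion.
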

\begin{proof}
    By Remark~\ref{remark_closed_subscheme_of_codomain_gives_closed_hom_scheme} and Remark~\ref{remark_constant_maps_component_in_the_hom_scheme} the scheme $\un{\Hom}_S(\cX,\widetilde{V} \times_{\Kfield_0} S)$ has two closed subschemes $\un{\Hom}_S^{\con}(\cX,\widetilde{V} \times_{\Kfield_0} S)$ and $\un{\Hom}_S(\cX,V_0 \times_{\Kfield_0} S)$. Take $H$ to be their complement, i.e., $H = \un{\Hom}_S^{\nc}(\cX,\widetilde{V} \times_{\Kfield_0} S) \setminus \un{\Hom}_S(\cX,V_0 \times_{\Kfield_0} S)$. Note that fibers of $H$ over points of $S$ are as claimed, so it is enough to check quasi-projectivity over $S$. 

    Note that $H$ is an open subscheme of $\un{\Hom}_S(\cX,\widetilde{V} \times_{\Kfield_0} S)$ which furthermore is naturally an open subscheme of the Hilbert scheme \[\un{\Hilb}_S(\cX \times_S (\widetilde{V} \times_{\Kfield_0} S)) = \un{\Hilb}_S(\cX \times_{\Kfield_0} \widetilde{V}). \]
    If we fix a relatively ample line bundle on $\cX$ over $S$ and an ample line bundle on $\widetilde{V}$, this scheme can be written as a disjoint sum
    \[ \un{\Hilb}_S(\cX \times_{\Kfield_0} \widetilde{V}) = \bigcup_{P} \un{\Hilb}_S^P(\cX \times_{\Kfield_0} \widetilde{V}), \]
    where $P$ varies among Hilbert polynomials with respect to the relatively ample line bundle (over $S$) induced on $\cX \times_{\Kfield_0} \widetilde{V}$. Moreover, all $\un{\Hilb}_S^P(\cX \times_{\Kfield_0} \widetilde{V})$ are projective over $S$, and by Remark~\ref{remark_only_fin_many_Hilbert_polynomials_standard} there are only finitely many Hilbert polynomials with fixed degree and bounded leading coefficient. Since all components $\un{\Hilb}_S^P(\cX \times_{\Kfield_0} \widetilde{V})$ are projective over $\Kfield_0$, to prove that $H$ is quasi-projective, it is enough to show that for $\Kfield = \overline{\Kfield_0}$, if
    \[ [f]=(f:\cX_{s} \to \widetilde{V}_{\Kfield}) \in H(\Kfield), \]
    then there is a bound on the leading coefficient of $P$ such that $[f] \in \un{\Hilb}_\Kfield^P(\cX_{s} \times_{\Kfield} \widetilde{V}_\Kfield)$, and this bound is independent of $f$ and of the image $s \in S(\Kfield)$ of $[f]$. Note that by Remark~\ref{remark_nice_properties_hilbert_schemes} (the last point) the degree of $P$ such that $[f] \in \un{\Hilb}_\Kfield^P(\cX_{s} \times_{\Kfield} \widetilde{V}_\Kfield)$ is fixed and equal to $\dim \cX_s$, because the dimension of the graph of $f$ is equal to the dimension of $\cX_s$. 

    Fix $[f]$ and $s \in S(\Kfield)$ as above. Let $\cL$ be a relatively ample line bundle on $\cX$ over $S$ and let $L$ be an ample line bundle on $\widetilde{V}$. Note that the degree of the Hilbert polynomial of $[f] \in \un{\Hilb}_\Kfield^P(\cX_{s} \times_{\Kfield} \widetilde{V}_\Kfield)$ can be calculated as the following intersection number
    \[ \frac{1}{d!} \int_{\cX_{s}} c_1(\cL_{s} \otimes f^*(L_{\Kfield}))^d, \]
    where $d$ is the relative dimension of $\cX$ over $S$. By using the same reasoning as in \cite[Proof of Theorem 6.3.29]{Lazarsfeld_positivity_II}, if we can bound the number
    \[ \int_{\cX_{s}} c_1(\cL_{s})^{d-1} c_1(f^*(L_{\Kfield})) \]
    independently of $s$ and $[f]$, then we will be done (one can also argue the sufficiency of such bound by the Khovanskii-Teissier inequalities, see Remark~\ref{Khovanskii_Teissier_inequalities}). 
    
    Now we use Bertini theorem to take sections $t_i$ in $H^0(\cX_s, \cL_s)$ for $i=1, \dots, d-1$ such that the zero sets of $t_i$'s form a complete intersection smooth curve $C \subset \cX_s$ which is not mapped to $(V_0)_{\Kfield}$ by $f$ (this curve depends on $s$ and $f$). Let $g$ be the genus of $C$. 
    By Theorem~\ref{theorem_boundedness_gives_uniform_bound_for_curves} we get that there is a number $\alpha(g)$, such that
    \[ \int_{\cX_{s}} c_1(\cL_{s})^{d-1} c_1(f^*(L_{\Kfield})) = \int_{C} c_1(f^*(L_{\Kfield})) \leq \alpha(g). \]
    Let $e = \int_{\cX_s} c_1(\cL_{s})^{d}$ and note that it is independent of $s$ by flatness of the family $\cX$ over $S$. We can bound $g$ using \cite{GLP-bound} in the following way
    \[ g \leq e^2 - 2e + 1.  \]
    This yields the bound
    \[ \int_{\cX_{s}} c_1(\cL_{s})^{d-1} c_1(f^*(L_{\Kfield})) \leq \alpha(g) \leq \max_{g \leq e^2 - 2e + 1} \alpha(g), \]
    which is independent of $[f]$ and $s$. This finishes the proof.
\end{proof}

\begin{remark}
    Note that the above proof actually shows that if the conclusion of Theorem~\ref{theorem_boundedness_gives_uniform_bound_for_curves} is satisfied for the base-change to the algebraic closure of the pair $V_0 \subset \widetilde{V}$, then $\widetilde{V}$ is geometrically bounded modulo $V_0$. Moreover, if in the above proof we assume that $V_0$ is empty and the fiber $\cX_s$ is fixed, then the curve $C$ does not depend on $f$. Thus in that case the above proof recovers the implication ``$1$-bounded implies bounded" from \cite[Theorem 9.3]{algebraic_hyperbolicity}.
\end{remark}

\begin{lemma}~\label{lemma_0_definability_for_one_family}
    Assume that $\widetilde{V}$ is geometrically pure. Let $H \to S$ be the quasi-projective morphism constructed in the above proposition. The following conditions on the field $\Kfield$ are equivalent.
    \begin{enumerate}
        \item For any pair $Z \subset X/\Kfield$ appearing as a $\Kfield$ fiber of $\cZ \subset \cX \to S$, either there is a non-constant rational map $f : X \dashrightarrow \widetilde{V}_{\Kfield}$ over $\Kfield$ not factoring through the inclusion $(V_0)_{\Kfield} \subset \widetilde{V}_{\Kfield}$ or $(X \setminus Z)(\Kfield)$ is non-empty.
        \item The following sentence is true:
        \[(\forall s \in S(\Kfield))(H_s(\Kfield)\neq \varnothing \vee (\cX \setminus \cZ)_s(\Kfield) \neq \varnothing ).\]
    \end{enumerate}
\end{lemma}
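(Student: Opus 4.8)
The goal is to show that two conditions on a field $\Kfield$ are equivalent: a geometric condition (1) about rational maps and rational points on fibers, and a first-order sentence (2) quantifying over $S(\Kfield)$. The plan is to prove this by carefully matching up, for each $s \in S(\Kfield)$, the geometric data appearing in (1) with the scheme-theoretic data appearing in (2), using the dictionary between rational maps and Hom schemes established in the earlier sections.

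First I would fix a point $s \in S(\Kfield)$ and let $Z \subset X$ be the corresponding fiber, i.e. $X = \cX_s$ and $Z = \cZ_s$. The key translation is on the left disjunct. By Proposition~\ref{proposition_quasi_projectivity_of_H}, the fiber $H_s$ is exactly $\un{\Hom}_{\Kfield}^{\nc}(\cX_s, \widetilde{V}_{\Kfield}) \setminus \un{\Hom}_{\Kfield}(\cX_s,(V_0)_{\Kfield})$, so a $\Kfield$-point of $H_s$ is precisely a non-constant morphism $X \to \widetilde{V}_{\Kfield}$ whose image is not contained in $(V_0)_{\Kfield}$. The content of the equivalence on this disjunct is therefore that \emph{morphisms} $X \to \widetilde{V}_{\Kfield}$ of this type are the same thing as \emph{non-constant rational maps} $X \dashrightarrow \widetilde{V}_{\Kfield}$ not factoring through $(V_0)_{\Kfield}$. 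This is exactly where the geometric pureness hypothesis on $\widetilde{V}$ enters: since $X = \cX_s$ is a smooth geometrically integral variety over $\Kfield$ (as $\cX \to S$ is smooth with geometrically integral fibers), Proposition~\ref{proposition_geometrical_pureness_under_base_change} guarantees that any rational map $X \dashrightarrow \widetilde{V}_{\Kfield}$ extends to a regular one. I would use this to convert rational maps into honest morphisms in both directions, and then check that ``non-constant rational map not factoring through $V_0$'' corresponds under this extension to a $\Kfield$-point of $\un{\Hom}_{\Kfield}^{\nc} \setminus \un{\Hom}_{\Kfield}(-,(V_0)_{\Kfield})$, i.e. a $\Kfield$-point of $H_s$. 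Some care is needed to confirm that a rational map whose image meets the open part $V$ does not factor through $V_0$ after extension, and conversely; this is where I expect the bookkeeping to be most delicate.

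For the right disjunct the matching is more direct: $(\cX \setminus \cZ)_s = X \setminus Z$, so the statement $(\cX \setminus \cZ)_s(\Kfield) \neq \varnothing$ literally says $(X \setminus Z)(\Kfield) \neq \varnothing$, with no pureness input required. Combining the two disjuncts, condition (1) holds for the pair $Z \subset X$ arising as the fiber over a fixed $s$ if and only if $H_s(\Kfield) \neq \varnothing \vee (\cX \setminus \cZ)_s(\Kfield) \neq \varnothing$ holds at that same $s$. Quantifying over all $s \in S(\Kfield)$ then yields the equivalence of (1) and (2), since every pair $Z \subset X$ appearing as a $\Kfield$-fiber of $\cZ \subset \cX \to S$ is of the form $\cZ_s \subset \cX_s$ for some $s \in S(\Kfield)$, and conversely.

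The main obstacle is the identification in the left disjunct between rational maps and $\Kfield$-points of $H_s$, and in particular verifying that the ``non-constant'' and ``not factoring through $V_0$'' conditions are faithfully preserved under the passage between rational and regular maps. The constancy issue is handled by Remark~\ref{remark_rational_maps_from_integral} (constant maps correspond to $\widetilde{V}_{\Kfield}(\Kfield)$-points, equivalently to the component $\un{\Hom}_{\Kfield}^{\con}$), and the factoring-through-$V_0$ issue reduces to noting that the extended morphism $X \to \widetilde{V}_{\Kfield}$ lands in the closed subscheme $(V_0)_{\Kfield}$ if and only if the original rational map did, since $X$ is integral and $(V_0)_{\Kfield}$ is closed. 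Once these two points are checked, the rest of the argument is a routine fiberwise translation followed by quantification over $S(\Kfield)$.
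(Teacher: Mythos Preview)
Your proposal is correct and follows essentially the same approach as the paper: use Proposition~\ref{proposition_geometrical_pureness_under_base_change} to replace rational maps $X \dashrightarrow \widetilde{V}_{\Kfield}$ by regular maps, and then invoke the description of $H_s$ from Proposition~\ref{proposition_quasi_projectivity_of_H} to identify the left disjuncts, with the right disjuncts matching trivially. The paper's proof is just a terser version of exactly this argument.
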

\begin{proof}
    First, note that, by Proposition~\ref{proposition_geometrical_pureness_under_base_change}, it follows that $\widetilde{V}_{\Kfield}$ itself satisfies the extension principle from the definition of pureness. In particular, in $(1)$ above we can only quantify over regular maps $f:X \to \widetilde{V}_{\Kfield}$ over $\Kfield$. This finishes the proof of this lemma, by the characterisation of fibers of the scheme $H$ from Proposition~\ref{proposition_quasi_projectivity_of_H}.
\end{proof}

\begin{corollary}
Let $\widetilde{V_0}$ be a geometrically integral projective variety over $\Kfield_0$ and $V_0 \subset \widetilde{V}$ be a closed proper subscheme such that $\widetilde{V}$ geometrically is bounded modulo $V_0$. Let $V = \widetilde{V} \setminus V_0$.    Consider the property $(\mathrm{II}_{V})$ for a field $\Kfield$, i.e., 
    \item[$(\mathrm{II}_{V})$] For any geometrically integral
    variety $X/\Kfield$, either there is a non-constant rational map $f : X
    \dashrightarrow V$ over $\Kfield$ or $X(\Kfield)$ is Zariski dense in $X$.\\
    This is an elementary property of the field $\Kfield$. More precisely, it is elementary in $L_{\mathrm{rings}}(\KK_0)$, i.e., in the language of rings expanded with constant symbols for elements in $\KK_0$.
\end{corollary}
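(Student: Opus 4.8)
The plan is to reduce the statement $(\mathrm{II}_V)$, which a priori quantifies over \emph{all} geometrically integral varieties $X/\Kfield$, to a countable conjunction of the elementary sentences produced by Lemma~\ref{lemma_0_definability_for_one_family}, one for each family $\{\cX_i \to S_i\}_{i \in I}$ coming from Proposition~\ref{proposition_0_def_family_of_flat_morphisms}. The key observation is that Proposition~\ref{proposition_0_def_family_of_flat_morphisms} gives a \emph{countable} collection of families $\cZ_i \subset \cX_i \to S_i$ whose fibers, as $i$ ranges over $I$ and $s$ over $S_i(\Kfield)$, realize (up to isomorphism over $\Kfield$) every pair $Z \subset X$ with $X$ smooth and geometrically integral. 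So I would first apply Proposition~\ref{proposition_quasi_projectivity_of_H} to each $\cX_i \to S_i$ (with our fixed $\widetilde{V}$ and $V_0$) to obtain quasi-projective morphisms $H_i \to S_i$, and then invoke Lemma~\ref{lemma_0_definability_for_one_family} to see that, for each $i$, the fiberwise condition in part $(1)$ of that lemma is equivalent to the single first-order sentence $\sigma_i$ in part $(2)$. The whole of $(\mathrm{II}_V)$ should then be equivalent to $\bigwedge_{i \in I} \sigma_i$, which is a first-order theory in $L_{\mathrm{rings}}(\KK_0)$.

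The main work lies in checking that this countable conjunction really does capture $(\mathrm{II}_V)$ in full, and here there are two reductions to justify. First I would reduce from an \emph{arbitrary} geometrically integral $X$ to a \emph{smooth} one: given any geometrically integral $X/\Kfield$, resolution of singularities (we are in characteristic zero) produces a smooth projective model $X'$ with $\Kfield(X') = \Kfield(X)$, and since having a non-constant rational map to $\widetilde{V}_\Kfield$ and having a Zariski-dense set of $\Kfield$-points both depend only on the birational class (the former by Remark~\ref{remark_rational_maps_from_integral}, the latter because a dense set of rational points transfers across a birational map on a dense open), the condition $(\mathrm{II}_V)$ for $X$ is equivalent to the analogous condition for $X'$. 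I should also handle the translation between ``$(X\setminus Z)(\Kfield)$ non-empty'' appearing in the lemma and ``$X(\Kfield)$ Zariski dense'' appearing in $(\mathrm{II}_V)$: for a \emph{fixed} $X$, letting $Z$ range over all proper closed subschemes (all of which appear as fibers in our families together with $X$) makes non-emptiness of $(X\setminus Z)(\Kfield)$ for every such $Z$ equivalent to Zariski density of $X(\Kfield)$. Second, I would confirm the embedding-into-projective-space step: every smooth geometrically integral projective $X$ embeds in some $\PP^n_\Kfield$, so the pair $(Z\subset X)$ is indeed captured by Proposition~\ref{proposition_0_def_family_of_flat_morphisms} for an appropriate $i$ and $s \in S_i(\Kfield)$.

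The subtle point—and the step I expect to be the main obstacle—is the compatibility of ``non-constant'' and ``not factoring through $V_0$'' between the two formulations. In $(\mathrm{II}_V)$ the relevant maps are non-constant rational maps $X \dashrightarrow V = \widetilde{V}\setminus V_0$, whereas Lemma~\ref{lemma_0_definability_for_one_family} speaks of regular maps $X \to \widetilde{V}_\Kfield$ that are non-constant and do not factor through $(V_0)_\Kfield$. The passage between these uses geometric pureness of $\widetilde{V}$ (via Proposition~\ref{proposition_geometrical_pureness_under_base_change}), which upgrades rational maps to $\widetilde{V}_\Kfield$ into regular ones, and one must check that a dominant-enough rational map to $V$ corresponds exactly to a regular map to $\widetilde{V}_\Kfield$ whose image is not contained in $V_0$; since $X$ is irreducible, a regular map $X \to \widetilde{V}_\Kfield$ lands in $V$ on a dense open precisely when it does not factor through $V_0$, so the two notions of ``non-constant map avoiding $V_0$'' agree. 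Once these identifications are in place, the statement follows: $(\mathrm{II}_V)$ holds for $\Kfield$ if and only if all the sentences $\sigma_i$ hold, giving an axiomatization of $(\mathrm{II}_V)$ by an $L_{\mathrm{rings}}(\KK_0)$-theory, which is exactly the claimed elementarity. I would close by remarking that this is the heart of the existence of the model companion, as it converts the a priori second-order axiom $(\mathrm{II}_V)$ into a first-order schema.
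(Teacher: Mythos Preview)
Your proposal is correct and follows essentially the same approach as the paper: the paper's proof is the single sentence ``This follows from Proposition~\ref{proposition_0_def_family_of_flat_morphisms} and Lemma~\ref{lemma_0_definability_for_one_family},'' and what you have written is precisely the unpacking of that sentence. You have correctly identified all the implicit reductions---passing to a smooth projective model via resolution of singularities, recovering Zariski density of $X(\Kfield)$ by letting $Z$ vary over all proper closed subschemes of $X$, and upgrading rational maps to regular ones via geometric pureness (Proposition~\ref{proposition_geometrical_pureness_under_base_change})---that the paper leaves to the reader. Note in particular that your flag about pureness is well placed: the corollary as stated omits this hypothesis, but Lemma~\ref{lemma_0_definability_for_one_family} requires it, and Theorem~\ref{theorem_main_result_of_the_paper} (which the corollary immediately feeds into) does state it explicitly; the subsequent remark in the paper confirms that pureness can ultimately be dropped, but only by a different argument using Chow schemes.
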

\begin{proof}
    This follows from Proposition~\ref{proposition_0_def_family_of_flat_morphisms} and Lemma~\ref{lemma_0_definability_for_one_family}.
\end{proof}
Over all this proves the following statement.
\begin{theorem}~\label{theorem_main_result_of_the_paper}
    Let $\widetilde{V}$ be a geometrically integral, geometrically pure, projective variety over $\Kfield_0$. Let $V_0 \subset \widetilde{V}$ be a closed subscheme such that $\widetilde{V}$ is geometrically bounded modulo $V_0$. Let $V = \widetilde{V} \setminus V_0$. Then $V$ is excludable.
\end{theorem}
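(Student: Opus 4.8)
The plan is to assemble the pieces established in Sections~\ref{sec:prelim}--\ref{sec:model_comp} and check that they deliver excludability, i.e.\ that the class of existentially closed models of $T_V$ is elementary. By the Corollary at the end of Section~\ref{sec:prelim}, a model $\Kfield \models T_V$ is existentially closed if and only if it satisfies $(\mathrm{I}_V)$ and $(\mathrm{II}_V)$. By Remark~\ref{remark_first_order_I}, the condition $(\mathrm{I}_V)$ is already first-order (an axiom schema indexed by the degree $n$ of a putative finite extension). Therefore the entire content of the theorem reduces to showing that $(\mathrm{II}_V)$ is a first-order (elementary) condition on $\Kfield$ in the language $L_{\mathrm{ring}}(\Kfield_0)$. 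This is exactly the statement proved in the Corollary immediately preceding the theorem, so the main task is to verify that the hypotheses of that Corollary match those of the theorem and to invoke it.

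First I would note that the hypotheses are in bijection: the theorem assumes $\widetilde{V}$ is geometrically integral, geometrically pure, projective over $\Kfield_0$, with $V_0 \subset \widetilde{V}$ closed and $\widetilde{V}$ geometrically bounded modulo $V_0$, and sets $V = \widetilde{V} \setminus V_0$. These are precisely the running hypotheses of Section~\ref{sec:model_comp}, under which Proposition~\ref{proposition_quasi_projectivity_of_H} produces, for each family $\cX \to S$ with a closed subscheme $\cZ$, a quasi-projective morphism $H \to S$ whose fiber over $s \in S(\Kfield)$ is the Hom-scheme $\un{\Hom}_{\Kfield}^{\nc}(\cX_s,\widetilde{V}_{\Kfield}) \setminus \un{\Hom}_{\Kfield}(\cX_s,(V_0)_{\Kfield})$. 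The geometric pureness enters through Lemma~\ref{lemma_0_definability_for_one_family}, which, via Proposition~\ref{proposition_geometrical_pureness_under_base_change}, lets one pass between rational maps and regular maps and thereby translates condition $(1)$ of that lemma (existence of a non-constant rational map to $V$ or a point off $Z$, for all pairs $Z\subset X$ in a single family) into the single first-order sentence $(2)$, namely
\[
(\forall s \in S(\Kfield))\bigl(H_s(\Kfield)\neq \varnothing \ \vee\ (\cX \setminus \cZ)_s(\Kfield) \neq \varnothing\bigr).
\]

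Next I would explain how to go from one family to all of $(\mathrm{II}_V)$. The quantifier in $(\mathrm{II}_V)$ ranges over \emph{all} geometrically integral varieties $X/\Kfield$ together with their closed subschemes, which a priori is not a first-order quantifier. The remedy is Proposition~\ref{proposition_0_def_family_of_flat_morphisms}: there is a \emph{countable} family $\{\cX_i \to S_i\}_{i\in I}$ of smooth projective morphisms, each equipped with a closed subscheme $\cZ_i$, such that every smooth geometrically integral $X \subset \PP^n_l$ with a closed subscheme $Z$ arises as a fiber $(\cZ_i)_s \subset (\cX_i)_s$ for some $i$ and some $s \in S_i(l)$. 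Since we may always replace an integral $X$ by a smooth projective model and an arbitrary open subset $X\setminus Z$ is governed by such a pair, running the single-family sentence of Lemma~\ref{lemma_0_definability_for_one_family} over all $i\in I$ yields a \emph{countable} conjunction of first-order sentences that is equivalent to $(\mathrm{II}_V)$. A countable set of first-order axioms is exactly what "elementary" requires, so $(\mathrm{II}_V)$ is elementary; this is the content of the Corollary preceding the theorem.

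Finally I would conclude: the class of existentially closed models of $T_V$ is cut out within models of $T_V$ by the first-order schema for $(\mathrm{I}_V)$ together with the countable axiomatization of $(\mathrm{II}_V)$, hence is elementary, and so $T_V$ has a model companion $V\XF$, i.e.\ $V$ is excludable. I expect the genuine obstacle to lie not in this final assembly—which is essentially bookkeeping over the cited results—but upstream, in the proof of Proposition~\ref{proposition_quasi_projectivity_of_H}: the crux is obtaining a bound on the intersection number $\int_{\cX_s} c_1(\cL_s)^{d-1}c_1(f^*L)$ that is \emph{uniform} across all fibers $s$ and all maps $f$, which is where boundedness modulo $V_0$ (through Theorem~\ref{theorem_boundedness_gives_uniform_bound_for_curves}), a Bertini cut to a smooth complete-intersection curve $C$, and the genus bound $g \leq e^2 - 2e + 1$ from \cite{GLP-bound} all have to be combined. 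Once that uniformity is in hand, the passage to first-order definability and thence to the model companion is formal.
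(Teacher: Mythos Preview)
Your proposal is correct and follows essentially the same route as the paper: the theorem is stated there as an immediate consequence of the preceding Corollary (whose proof invokes Proposition~\ref{proposition_0_def_family_of_flat_morphisms} and Lemma~\ref{lemma_0_definability_for_one_family}), together with the characterization of existentially closed models from Section~\ref{sec:prelim} and Remark~\ref{remark_first_order_I}. You have also correctly located the substantive work upstream in Proposition~\ref{proposition_quasi_projectivity_of_H}.
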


\begin{remark}
    In the above corollary one can skip the assumption that $\widetilde{V}$ is geometrically pure. This follows from the proof of Proposition~\ref{proposition_quasi_projectivity_of_H} and the proof of existence of model companion for curve excluding fields in \cite{johnson2023curveexcluding} (using Chow schemes rather than Hilbert schemes). We leave this as an exercise to the reader.
\end{remark}

\begin{corollary}~\label{corollary_random_subvarieties_are_excludable}
    Let $V_0 \subset \widetilde{V}$ be a pair obtained as in Corollary~\ref{corollary_random_subvarieties_of_abelian_varieties_provide_examples} (with $V_0=Y_0, \widetilde{V}=X_0$). Then $V=\widetilde{V} \setminus V_0$ is excludable.
\end{corollary}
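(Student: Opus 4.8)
The plan is to deduce Corollary~\ref{corollary_random_subvarieties_are_excludable} as a direct application of the main result, Theorem~\ref{theorem_main_result_of_the_paper}, by verifying that the pair $V_0 = Y_0 \subset \widetilde{V} = X_0$ produced in Corollary~\ref{corollary_random_subvarieties_of_abelian_varieties_provide_examples} satisfies all three hypotheses of that theorem: that $\widetilde{V}$ is geometrically integral, geometrically pure, and geometrically bounded modulo $V_0$. The conclusion that $V = \widetilde{V} \setminus V_0$ is excludable then follows immediately. First I would recall what Corollary~\ref{corollary_random_subvarieties_of_abelian_varieties_provide_examples} already hands us: a finitely generated field $\KK_0 \subset \KK$, a $\KK_0$-model $X_0$ of a sufficiently general complete intersection $X$ inside an abelian variety (obtained via Theorem~\ref{theorem_complete_int_have_ample_cotangent}), and a proper closed subscheme $Y_0 \subset X_0$ with $(X_0 \setminus Y_0)(\KK_0) = \varnothing$, such that the cotangent bundle $\Omega_{X_0}$ is ample and $X_0$ is geometrically integral and smooth. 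Thus geometric integrality is granted outright, and since $V(\KK_0) = (X_0 \setminus Y_0)(\KK_0) = \varnothing$ the basic consistency requirement $V(\KK_0) = \varnothing$ for excludability is also in place.

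Next I would verify geometric pureness. Since $\Omega_{X_0}$ is ample, Theorem~\ref{theorem_ample_cotangent_implies_indecomposability} shows that $\un{\Hom}_{\overline{\KK_0}}^{\nc}(Y, (X_0)_{\overline{\KK_0}})$ is finite for every integral projective $Y$; in particular, applying this with $Y = \PP^1$ forces every morphism $\PP^1 \to (X_0)_{\overline{\KK_0}}$ to be constant. By the reformulation of pureness in Remark~\ref{remark_equivalent_conditions_on_boundedness_etc} (pure if and only if every morphism $\PP^1 \to X$ is constant), the base-change $(X_0)_{\overline{\KK_0}}$ is pure, which is exactly the statement that $X_0 = \widetilde{V}$ is geometrically pure.

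The remaining and most substantive point is geometric boundedness modulo $V_0$. Here I would again invoke ampleness of the cotangent bundle: by Kobayashi's theorem $(X_0)_{\overline{\KK_0}}$ is algebraically hyperbolic, and by the implications recorded in Remark~\ref{remark:weak_Lang_Vojta} algebraic hyperbolicity gives boundedness, hence $(X_0)_{\overline{\KK_0}}$ is bounded (over $\overline{\KK_0}$), i.e. $\un{\Hom}_{\overline{\KK_0}}(Y, (X_0)_{\overline{\KK_0}})$ is of finite type for every normal integral projective $Y$. Since boundedness is the non-relative case ($\Delta = \varnothing$) of boundedness modulo $\Delta$, and removing the subscheme $\un{\Hom}_{\overline{\KK_0}}(Y, (Y_0)_{\overline{\KK_0}})$ only shrinks a scheme that is already of finite type, $(X_0)_{\overline{\KK_0}}$ is in particular bounded modulo $(Y_0)_{\overline{\KK_0}}$; equivalently $\widetilde{V}$ is geometrically bounded modulo $V_0$. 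With geometric integrality, geometric pureness, and geometric boundedness modulo $V_0$ all established, Theorem~\ref{theorem_main_result_of_the_paper} applies verbatim and yields that $V = \widetilde{V} \setminus V_0$ is excludable. I expect the only genuine subtlety to be bookkeeping around the base-field: all the hyperbolicity inputs (Theorems~\ref{theorem_complete_int_have_ample_cotangent} and~\ref{theorem_ample_cotangent_implies_indecomposability}, Kobayashi's theorem, and the implications of Remark~\ref{remark:weak_Lang_Vojta}) are stated over an algebraically closed field, so one must be careful that the relevant properties of $X_0$ over the finitely generated field $\KK_0$ are precisely the \emph{geometric} versions needed by Theorem~\ref{theorem_main_result_of_the_paper}, which is guaranteed because the corollary supplies ampleness of $\Omega_{X_0}$ together with geometric integrality and smoothness, and these pass to the algebraic closure.
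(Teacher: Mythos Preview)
Your approach is exactly the one the paper intends (the corollary is stated without proof as an immediate consequence of Theorem~\ref{theorem_main_result_of_the_paper}), and your verifications of geometric integrality, of $V(\KK_0)=\varnothing$, and of geometric boundedness modulo $V_0$ are clean and correct.

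There is one small gap in your pureness argument. You invoke Theorem~\ref{theorem_ample_cotangent_implies_indecomposability} with $Y=\PP^1$ to conclude that every morphism $\PP^1\to (X_0)_{\overline{\KK_0}}$ is constant, but that theorem only says $\un{\Hom}^{\nc}_{\overline{\KK_0}}(\PP^1,(X_0)_{\overline{\KK_0}})$ is \emph{finite}, not empty. To close the gap you must note that a single non-constant map $f:\PP^1\to X$ yields, by precomposition with $\PGL_2$, a positive-dimensional family of distinct non-constant maps (the stabiliser of $f$ in $\PGL_2$ is finite since $f$ is generically finite onto its image), contradicting finiteness. Alternatively, and more in line with the paper's organisation, you can skip Theorem~\ref{theorem_ample_cotangent_implies_indecomposability} entirely: Kobayashi's theorem already gives algebraic hyperbolicity, and then the chain of implications in Remark~\ref{remark:weak_Lang_Vojta} runs all the way down to pureness (condition~(6)), so geometric pureness comes for free from the same input you use for boundedness.
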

\begin{remark}\label{rmk:computability}
In general, if $\widetilde{V}$ is only assumed to be 1-bounded/bounded, it is unclear if the axiomatisation of $V\XF$ is computable. The computability of the set of the axioms of $V\XF$ relies on the computability of the function $\alpha$ from the data of $X,L$ in Remark~\ref{remark_equivalent_conditions_on_boundedness_etc}. If $\widetilde{V}$ is algebraically hyperbolic, then the axiomatisation above is indeed computable.

In some sense, one can use the complexity/computability of axiomatisation in $V\XF$ as a measurement of the hyperbolicity of $V$. However, there might not be any real examples fitting in between as suggested by the conjecture that groupless implies algebraically hyperbolic.
\end{remark}

\section{$V\XF$ for indecomposable $V$}~\label{sec:indecomp}

Throughout this section, we fix a characteristic zero base-field $\Kfield_0$ and let $V$ be a quasi-projective excludable variety over $\Kfield_0$. Moreover (if not stated otherwise) assume that $V$ has a projective compactification $\widetilde{V}$ which is geometrically integral and geometrically indecomposable. We study the properties of $V\XF$ for such $V$.  Note that this is the case when $\widetilde{V}$ is $X_0$ from Corollary~\ref{corollary_random_subvarieties_of_abelian_varieties_provide_examples}. We freely use Proposition~\ref{proposition_geometric_indecomposability} and Proposition~\ref{proposition_geometrical_pureness_under_base_change} for base-changes of $\widetilde{V}$. Slightly abusing notation, let $V\XF_\forall$ be the theory $T_V$ of
$\KK_0$-fields that avoid $V$, which explicitly means that $V(K) = \varnothing$. We skip the proofs that follow exactly as the curve case in \cite{johnson2023curveexcluding}.

\begin{proposition}
    Let $V$ be an arbitrary excludable quasi-projective variety with a smooth projective completion $\widetilde{V}$.  If $\KK \models V\XF$, then $\KK$ is large if and only if $\widetilde{V}(\KK) = \varnothing$.
\end{proposition}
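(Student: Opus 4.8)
The plan is to reduce everything to the boundary $V_0 := \widetilde{V} \setminus V$ and the disjunction $(\mathrm{II}_V)$. Since $\KK \models V\XF$ we have $V(\KK) = \varnothing$, and as $\widetilde{V}(\KK) = V(\KK) \sqcup V_0(\KK)$ this gives $\widetilde{V}(\KK) = V_0(\KK)$; in particular $\widetilde{V}(\KK) = \varnothing$ is equivalent to $V_0(\KK) = \varnothing$. I would prove the two implications separately, using only $(\mathrm{II}_V)$ and elementary facts about curves, so that the hyperbolicity and purity hypotheses on $\widetilde{V}$ play no role here.

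First I would show that $\widetilde{V}(\KK) = \varnothing$ implies largeness. It suffices to test largeness on geometrically integral smooth curves, since a smooth curve with a $\KK$-point is geometrically integral along the component through that point and density is a statement about that component. So let $D$ be a geometrically integral smooth curve over $\KK$ with $D(\KK) \neq \varnothing$ and apply $(\mathrm{II}_V)$ to $D$. If $D(\KK)$ is Zariski dense we are done; otherwise there is a non-constant rational map $f : D \dashrightarrow V_{\KK}$. Since $D$ is a smooth curve and $\widetilde{V}$ is projective, hence proper, $f$ extends to a morphism $\bar{f} : D \to \widetilde{V}_{\KK}$ by the valuative criterion. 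Evaluating $\bar{f}$ at any $p \in D(\KK)$ produces a point of $\widetilde{V}(\KK) = \varnothing$, a contradiction. Hence the first alternative of $(\mathrm{II}_V)$ never occurs, $D(\KK)$ is always Zariski dense, and $\KK$ is large.

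For the converse I would argue the contrapositive: if $\widetilde{V}(\KK) \neq \varnothing$ then $\KK$ is not large. Choose $q \in \widetilde{V}(\KK) = V_0(\KK)$. Fixing a projective embedding of the smooth variety $\widetilde{V}$ and intersecting with $(\dim \widetilde{V} - 1)$ general hyperplanes through $q$, Bertini's theorem in characteristic zero yields a smooth, geometrically integral curve $D \subseteq \widetilde{V}$ through $q$ and not contained in the proper closed subscheme $V_0$. Then $q \in D(\KK)$, while every point of $D(\KK)$ lies in $\widetilde{V}(\KK) = V_0(\KK)$, hence in $D \cap V_0$. As $D$ is an integral curve not contained in $V_0$, the intersection $D \cap V_0$ is a proper closed subset of $D$, so it is finite; therefore $D(\KK)$ is a non-empty finite set. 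This exhibits a smooth curve with a rational point whose rational points are not Zariski dense, witnessing that $\KK$ is not large.

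I expect the only genuinely technical point to be the Bertini step in the converse: producing a curve that is simultaneously smooth, geometrically integral, not contained in $V_0$, and forced through the prescribed $\KK$-point $q$. Imposing passage through $q$ destroys base-point-freeness at $q$, but smoothness of $\widetilde{V}$ at $q$ still allows one to find a member smooth at $q$, and geometric irreducibility of a general complete-intersection curve follows from the Bertini irreducibility theorem when $\dim \widetilde{V} \geq 2$; the case $\dim \widetilde{V} = 1$ is immediate, since then $\widetilde{V}$ itself is the desired curve. Everything else is formal.
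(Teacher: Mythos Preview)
Your argument is correct, and since the paper gives no proof here (it simply defers to the curve case, where $\widetilde{V}$ is already one-dimensional and the converse needs no Bertini step), your write-up is actually more detailed than the paper's own treatment. The forward implication is exactly the natural argument and matches what the curve case does verbatim.

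For the converse, your Bertini step can indeed be carried out: smoothness of $\widetilde{V}$ at $q$ forces a general hyperplane through $q$ to be transverse to $\widetilde{V}$ there, and since each successive slice still contains the $\KK$-point $q$, it is automatically geometrically connected, hence geometrically integral once smooth; avoiding $V_0$ is then generic. That said, you can sidestep this entirely by invoking the standard equivalent form of largeness valid in every dimension: $\KK$ is large if and only if every smooth geometrically integral $\KK$-variety with a $\KK$-point has Zariski-dense $\KK$-points. Under this formulation $\widetilde{V}$ itself is the witness to non-largeness, since $\widetilde{V}(\KK) \subseteq V_0 \subsetneq \widetilde{V}$. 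This is really the same argument---the proof of that equivalence is precisely the Bertini reduction you sketch---but citing it keeps the proposition to two lines, which is presumably what the authors intend by ``exactly as the curve case''.
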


\begin{lemma}~\label{lemma_amalgamation_of_models_of_VXF_forall}
    If $\KK$ is a field and $l_1,l_2$ are two regular extensions satisfying $V\XF_\forall$, then $\Frac(l_1 \otimes_{\KK} l_2) \models V\XF_\forall$.
\end{lemma}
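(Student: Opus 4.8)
The plan is to argue by contradiction: assuming $F := \Frac(l_1 \otimes_\KK l_2)$ carries a point of $V$, I will produce a point of $V$ over $l_1$ or over $l_2$, contradicting $l_1, l_2 \models V\XF_\forall$. First note that since (say) $l_1$ is a regular extension of $\KK$, the ring $l_1 \otimes_\KK l_2$ is a domain, so $F$ is well-defined; as $F \supseteq l_1 \supseteq \KK_0$ it is a $\KK_0$-field, and the only content of the lemma is $V(F) = \varnothing$. Before doing geometry I would reduce to the finitely generated case. Suppose $p \in V(F)$; being cut out by finitely many homogeneous coordinates in $F$, each a ratio of elements of $l_1 \otimes_\KK l_2$, the point $p$ already lives over $F' := \Frac(l_1' \otimes_\KK l_2')$ for suitable finitely generated subextensions $\KK \subseteq l_i' \subseteq l_i$. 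In characteristic zero a subextension of a regular extension is again regular (relative algebraic closedness and separability are inherited), so $l_1', l_2'$ are finitely generated regular extensions with $V(l_i') \subseteq V(l_i) = \varnothing$; hence it suffices to treat finitely generated $l_1, l_2$, which I now assume.

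Next I would set up the geometry. Choose smooth projective models $X_1, X_2$ over $\KK$ with $\KK(X_i) = l_i$; by Lemma~\ref{lemma_characterisation_of_geometriacally_irreducible} these are geometrically integral, so $X_1 \times_\KK X_2$ is a smooth projective geometrically integral variety with $\KK(X_1 \times_\KK X_2) = F$. By Remark~\ref{remark_rational_maps_from_integral} the point $p \in V(F)$ corresponds to a rational map $X_1 \times_\KK X_2 \dashrightarrow V_\KK$, which I view as a rational map to $\widetilde{V}_\KK$ whose generic point lands in the open part $V$. Since indecomposable varieties are pure (the remark following Lemma~\ref{lemma_indecomposable_is_groupless}), $\widetilde{V}$ is geometrically pure, so by Proposition~\ref{proposition_geometrical_pureness_under_base_change} this rational map extends to a \emph{regular} morphism $\phi : X_1 \times_\KK X_2 \to \widetilde{V}_\KK$.

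Then I would factor and descend. By Proposition~\ref{proposition_geometric_indecomposability}, $\widetilde{V}_\KK$ is indecomposable over $\KK$, so $\phi$ factors through one projection, say $\phi = \psi \circ \pi_1$ with $\psi : X_1 \to \widetilde{V}_\KK$ and $\pi_1$ the dominant projection to $X_1$. Since $\pi_1$ sends the generic point of $X_1 \times_\KK X_2$ to the generic point of $X_1$, the morphism $\psi$ corresponds to a point $q \in \widetilde{V}(l_1)$ whose image under $l_1 \hookrightarrow F$ is $p$. Because $V_0$ is a closed subscheme defined over $\KK$ and $p \in V(F) = (\widetilde{V} \setminus V_0)(F)$ avoids $V_0$, the point $q$ cannot lie in $V_0(l_1)$; hence $q \in V(l_1)$, contradicting $V(l_1) = \varnothing$. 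The case where $\phi$ factors through $X_2$ is identical with $l_1$ replaced by $l_2$.

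The main obstacle is the joint use of the two rigidity inputs in the middle step: one must simultaneously invoke purity to replace the rational map by an honest morphism (via geometric purity of $\widetilde{V}$ and Proposition~\ref{proposition_geometrical_pureness_under_base_change}) and indecomposability to split that morphism across the product (via Proposition~\ref{proposition_geometric_indecomposability}), both of which genuinely require $X_1, X_2$ to be smooth, projective and geometrically integral. A secondary point demanding care is that the factored map lands generically in the open stratum $V$ rather than merely in $\widetilde{V}$; this is exactly where I use that $p$ avoids $V_0$ and that $V_0$ is defined over the base field $\KK$, so that avoidance of $V_0$ descends from $F$ to $l_1$. Everything else — the domain property of the tensor product and the reduction to finitely generated extensions — is formal.
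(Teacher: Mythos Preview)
Your proof is correct and follows essentially the same route as the paper's: reduce to finitely generated extensions, pick smooth projective geometrically integral models $X_1, X_2$, use geometric pureness of $\widetilde{V}$ (via Proposition~\ref{proposition_geometrical_pureness_under_base_change}) to extend the rational map to a morphism, and then invoke indecomposability over $\KK$ (via Proposition~\ref{proposition_geometric_indecomposability}) to factor through a projection. The paper's version is terser; your additional paragraph checking that the factored map $\psi$ sends the generic point of $X_i$ into $V$ rather than merely into $\widetilde{V}$ (using that $\pi_1$ is dominant and that $p$ avoids the closed subscheme $V_0$) makes explicit a step the paper leaves to the reader, but the argument is the same.
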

\begin{proof}  
This is essentially a restatement of the geometrically indecomposability of $\widetilde{V}$.
    We may assume that $l_1$ and $l_2$ are finitely generated over $\KK$, hence $l_i=\KK(X_i)$ for some geometrically integral, smooth (by resolution of singularities) $X_i/\KK$. Since $l_i$ avoids $V$, there is no non-constant rational map $X_i\dashrightarrow V_\KK$ over $\KK$. Assume for a contradiction, that there is a rational map $f:X_1 \times_\KK X_2\dashrightarrow V_\KK$ over $\KK$. Since $\widetilde{V}$ is geometrically pure, there is a regular extension $f:X_1 \times_\KK X_2 \to \widetilde{V}_\KK$ over $\KK$. As $\widetilde{V}$ is geometrically indecomposable, $f$ factors through the projection to $X_1$ or $X_2$. This gives a contradiction. Hence $\mathrm{Frac}(l_1\otimes_\KK l_2)=\KK(X_1\times X_2)$ avoids $V$.
\end{proof}

The above lemma is the replacement for~\cite[Lemma 4.3, 4.4]{johnson2023curveexcluding}, which enables us to generalise (without any changes) some results from the curve case in \cite{johnson2023curveexcluding} to $V\XF$. For example, we immediately get the following. 
\begin{lemma}\label{lemma_partial_elem}
  Let $k$ be a $k_0$-field and $l_1, l_2$ be two regular extensions of
  $k$ satisfying $V\XF$.  Then the identity map from $k$ to $k$ is
  partial elementary between the $L_i$'s.  In other words,
  $l_1\equiv_k l_2$.
\end{lemma}
For the remainder of the statements, we list the generalisations here and point the readers to the proofs in~\cite{johnson2023curveexcluding}. Let us recall the terminologies involved before stating the theorem.

For a field extension $\KK_0 \subset l$, recall that $\Abs_{\KK_0}(l)$ for the field $l \cap \overline{\KK_0}$. Recall that an expansion of field $(K,+,\cdot,...)$ is \textit{algebraically bounded over (a subfield) $F$}  if for any formula $\varphi(\bar{x},y)$, there are finitely many polynomials
		$P_1,\ldots,P_m \in F[\bar{x},y]$ such that for any $\bar{a}$, \emph{if}
		$\varphi(\bar{a},K)$ is finite, then $\varphi(\bar{a},K)$ is contained in
		the zero set of $P_i(\bar{a},y)$ for some $i$ such that $P_i(\bar{a},y)$ does not vanish.  We say that $K$
		is \emph{algebraically bounded} if it is algebraically bounded over
		$K$~\cite{lou-dimension}. By~\cite{JohYe_geom}, algebraically bounded over the prime field is the same as very slim in the sense of~\cite{JK-slim}. We say a field $k$ is \emph{Hilbertian} if there is $k\preceq k'$ and $t\in k'\setminus k$ such that $k(t)$ is relatively algebraically closed in $K'$. Lastly, recall that for a field $k$, its absolute Galois group $\Gal(k)$ is \emph{$\omega$-free}\footnote{This definition is slightly different from the usual definition (for example in~\cite{field-arithmetic}). Our definition is the first-order characterization of having $\omega$-free Galois group in the classical sense.}, if for any
  surjective homomorphism of finite groups $\alpha : H \to G$ and any
  continuous surjective homomorphism $\beta : \Gal(k) \to G$, there is
  a continuous surjective homomorphism $\gamma : \Gal(k) \to H$ making
  the diagram commute:
  \begin{equation*}
    \xymatrix{ \Gal(k) \ar@{-->}[r]^-{\gamma} \ar[dr]_-{\beta} & H \ar[d]^{\alpha} \\ & G.}
  \end{equation*}

\begin{proposition}~\label{theorem_properties_of_VXF}
    The theory $V\XF$ has the following properties.
    \begin{enumerate}
        \item Let $l_1, l_2 \models V\XF$. Then $l_1 \equiv l_2$ if and only if $\Abs_{\KK_0}(l_1) \simeq \Abs_{\KK_0}(l_2)$ as $\KK_0$-fields. Moreover, for any algebraic extension $\KK_0 \subset \KK$ avoiding $V$, there is a model $l \models V\XF$ such that $\Abs_{\KK_0}(l) = \KK$.
        \item $V\XF$ has quantifier elimination if we expand by the predicates
        \begin{equation*}
            \mathrm{Sol}_n(x_0,...,x_n)\leftrightarrow \exists y \, x_0+yx_1+...+y^nx_n=0.    
        \end{equation*}  
        \item Model-theoretic and field-theoretic (relative) algebraic closure coincide in models of $V\XF$. In particular, models of $V\XF$ are algebraically bounded over $k_0$.
        \item $V\XF$ is a geometric theory, i.e., $\acl$ satisfies the exchange property and $\exists^\infty$ is uniformly eliminated.
         \item Every proper finite extension of models of $V\XF$ is a Hilbertian PAC field.
         \item Models of $V\XF$ are Hilbertian. Moreover, the absolute Galois group is $\omega$-free.
         \item In the axiomatization of $V\XF$ one can in fact skip the axiom $(\mathrm{I}_V)$. In other words, fields $\KK$ that avoid $V$ and satisfy $(\mathrm{II}_V)$ are models of $V\XF$.
         \item Model-theoretically, $V\XF$ is NSOP$_4$ and TP$_2$.
    \end{enumerate}
\end{proposition}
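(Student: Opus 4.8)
The plan is to reduce the whole proposition to the curve case of \cite{johnson2023curveexcluding}, with Lemma~\ref{lemma_amalgamation_of_models_of_VXF_forall} playing the role of \cite[Lemma 4.3, 4.4]{johnson2023curveexcluding}. The observation driving everything is that the model-theoretic analysis in \cite{johnson2023curveexcluding} uses only two structural inputs: that two regular extensions avoiding $V$ can be amalgamated inside a common regular extension that still avoids $V$ (our Lemma~\ref{lemma_amalgamation_of_models_of_VXF_forall}), and that regular extensions of a common base satisfying $V\XF$ are partially elementary over that base (our Lemma~\ref{lemma_partial_elem}). Geometric indecomposability (Proposition~\ref{proposition_geometric_indecomposability}) and geometric pureness (Proposition~\ref{proposition_geometrical_pureness_under_base_change}) enter only through the amalgamation lemma -- the latter to extend a rational map out of a product to a regular one, the former to force it through a single projection -- so once these two lemmas are in hand every argument of \cite{johnson2023curveexcluding} transcribes with $C$ replaced by $\widetilde{V}$ and ``genus $\geq 2$'' replaced by ``geometrically indecomposable and bounded modulo $V_0$''.

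For (1) and (2) I would run the back-and-forth explicitly. First note that $\Abs_{\KK_0}(l)=l\cap\overline{\KK_0}$ is relatively algebraically closed in $l$, so every $l\models V\XF$ is a regular extension of its absolute part. Hence if $\Abs_{\KK_0}(l_1)\simeq\Abs_{\KK_0}(l_2)$ as $\KK_0$-fields, then $l_1,l_2$ are regular extensions of a common copy of this field and Lemma~\ref{lemma_partial_elem} gives $l_1\equiv l_2$; conversely, the $\mathrm{Sol}_n$-predicates (equivalently, the $\KK_0$-constants together with the axiom schema for finite extensions) pin down which finite subextensions of $\overline{\KK_0}$ embed into $l_i$, so elementary equivalence recovers $\Abs_{\KK_0}$ up to $\KK_0$-isomorphism. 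The ``moreover'' clause is obtained by existentially closing a regular extension of the given algebraic $\KK$ inside $T_V$ while preserving regularity over $\KK$, which is exactly what Lemma~\ref{lemma_amalgamation_of_models_of_VXF_forall} permits. Quantifier elimination in (2) is then the same back-and-forth carried out on quantifier-free types: the $\mathrm{Sol}_n$-predicates make a quantifier-free type determine the isomorphism type of the generated field together with its relative algebraic closure, and Lemma~\ref{lemma_partial_elem} promotes equality of quantifier-free types to a partial elementary map.

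The remaining items follow the curve template. Quantifier elimination yields (3), namely that $\acl$ coincides with relative field-theoretic algebraic closure and hence that models are algebraically bounded over $\KK_0$; (4) is then immediate, since exchange for $\acl$ reduces to exchange for transcendence degree and elimination of $\exists^\infty$ follows from algebraic boundedness. For (5) a proper finite extension $l$ of a model satisfies $V(l)\neq\varnothing$ by $(\mathrm{I}_V)$, and one shows PAC-ness by passing a geometrically integral $X/l$ to its Weil restriction $R=\mathrm{Res}_{l/\KK}(X)$, which is again geometrically integral (over $\overline{\KK}$ it is a product of the conjugates of $X$), and applying $(\mathrm{II}_V)$ to $R$: density of $R(\KK)=X(l)$ gives the desired point, while the alternative of a non-constant map $R\dashrightarrow V_\KK$ is analysed through pureness and indecomposability. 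Hilbertianity and $\omega$-freeness in (6), the redundancy of $(\mathrm{I}_V)$ in (7), and the classifications $\mathrm{NSOP}_4$ and $\mathrm{TP}_2$ in (8) are proved as in \cite{johnson2023curveexcluding}, with $\mathrm{NSOP}_4$ extracted from the amalgamation of Lemma~\ref{lemma_amalgamation_of_models_of_VXF_forall} and $\mathrm{TP}_2$ witnessed by the independent configurations it produces.

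The only genuinely new content is Lemma~\ref{lemma_amalgamation_of_models_of_VXF_forall}, so I regard its (already given) proof as the crux; granting it, the two points I would check most carefully in the transcription are the PAC-ness in (5) -- where one must confirm that the Weil-restriction plus indecomposability argument really disposes of the map-to-$V$ alternative without secretly needing $\dim\widetilde{V}=1$ -- and the $\mathrm{TP}_2$ pattern in (8), where one should verify that the families of independent solutions furnished by the amalgamation still realize the two-dimensional array witnessing the tree property of the second kind. Everything else is an essentially formal translation of \cite{johnson2023curveexcluding}.
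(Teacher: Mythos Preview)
Your proposal is correct and matches the paper's approach: both reduce every item to the corresponding statement in \cite{johnson2023curveexcluding}, with Lemma~\ref{lemma_amalgamation_of_models_of_VXF_forall} replacing \cite[Lemma~4.3,~4.4]{johnson2023curveexcluding} and Lemma~\ref{lemma_partial_elem} replacing \cite[Lemma~4.5]{johnson2023curveexcluding}. The only minor divergences are that the paper derives (7) directly from (5) rather than citing \cite{johnson2023curveexcluding} separately, and for (8) the paper emphasizes that $\mathrm{TP}_2$ is obtained via the shared quantifier-elimination language with $\omega\mathrm{PAC}_0$ (\cite[Theorem~6.7]{johnson2023curveexcluding}) rather than directly from the amalgamation pattern you describe.
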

\begin{proof}

Essentially all of the proofs here follows from the proof of the analogue statement in~\cite{johnson2023curveexcluding} replacing \cite[Lemma 4.5]{johnson2023curveexcluding} with Lemma~\ref{lemma_partial_elem}.

The proof of (1) follows exactly the same as~\cite[Theorem 4.6, 4.16]{johnson2023curveexcluding}, replacing the usage of Lemma 4.5 with~Lemma~\ref{lemma_partial_elem}. 

For (2), it follows word for word as in the proof of~\cite[Corollary 4.7]{johnson2023curveexcluding}.

For (3), it is~\cite[Theorem 4.9]{johnson2023curveexcluding}.

(4) is the same as~\cite[Corollary 4.12]{johnson2023curveexcluding}.

(5) and (6) are~\cite[Theorem 4.23, 4.27]{johnson2023curveexcluding} and~\cite[Section 8]{johnson2023curveexcluding}. 

(7) follows from (5).

(8) is the same as~\cite[Corollary 4.28] {johnson2023curveexcluding} and~\cite[Theorem 6.7]{johnson2023curveexcluding} relying on the fact that both $V\XF$ and $\omega\mathrm{PAC}_0$ admit quantifier elimination in the same language as in (2).
\end{proof}
Lastly, we have the following description of the decidability of the incomplete theory $V\XF$. The proof is the same as in~\cite[Theorem 4.20]{johnson2023curveexcluding} combined with~Remark~\ref{rmk:computability}.
\begin{proposition}
Let $k_0$ be a number field and $\widetilde{V}$ be algebraically hyperbolic over $k_0$ and $V$ as before. Then the following two decision problems are Turing equivalent:
  \begin{enumerate}
      \item Given a sentence $\phi$ in the language of $k_0$-algebras, determine whether $V\XF \vdash \phi$.
      \item Given a finite extension $k/k_0$, determine whether $V(k) = \varnothing$.
  \end{enumerate}
\end{proposition}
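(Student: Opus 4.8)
The plan is to establish the two Turing reductions separately, using three ingredients that are now available. First, by Remark~\ref{rmk:computability} the hypothesis that $\widetilde{V}$ is algebraically hyperbolic makes the axiomatization of $V\XF$ computable, so $\{\phi : V\XF \vdash \phi\}$ is computably enumerable. Second, by part (2) of Proposition~\ref{theorem_properties_of_VXF} the theory eliminates quantifiers after adjoining the predicates $\mathrm{Sol}_n$, and this elimination is effective: given $\phi$ one searches for a proof of an equivalence $V\XF \vdash \phi \leftrightarrow \psi$ with $\psi$ quantifier-free, which exists by quantifier elimination and is found in finite time since the axioms are computable. Third, by part (1) of Proposition~\ref{theorem_properties_of_VXF} the completions of $V\XF$ are classified by the isomorphism type of $\Abs_{k_0}(l)$, and every algebraic extension of $k_0$ avoiding $V$ occurs as such. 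I would use throughout the elementary observation that a finite extension $k/k_0$ embeds into $\Abs_{k_0}(l)$ for some $l \models V\XF$ if and only if $V(k) = \varnothing$: if $V(k) \neq \varnothing$ then $V(K) \neq \varnothing$ for every $K \supseteq k$, so no model of $V\XF$ can contain a copy of $k$; while if $V(k) = \varnothing$, a Zorn's lemma argument (a point over a union of a chain is already defined over a finite stage) produces a maximal algebraic extension $K \supseteq k$ avoiding $V$, which is $\Abs_{k_0}(l)$ for some model by part (1).

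For the implication (1) $\Rightarrow$ (2), I would reduce a single instance $V(k) = \varnothing$ to one oracle call. Writing $k = k_0[x]/(P_k)$ for an irreducible $P_k \in k_0[x]$ and setting $\phi_k := \exists x\, P_k(x) = 0$, the observation above shows that $V(k) = \varnothing$ if and only if some model of $V\XF$ contains a root of $P_k$, i.e.\ if and only if $V\XF \cup \{\phi_k\}$ is consistent, i.e.\ if and only if $V\XF \nvdash \neg\phi_k$. Thus I would query the oracle for (1) on the sentence $\neg\phi_k$ and invert the answer.

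For the implication (2) $\Rightarrow$ (1), I would combine proof search with the oracle. On one side, provability $V\XF \vdash \phi$ is semidecidable directly from the computable axiomatization. On the other side, to semidecide $V\XF \nvdash \phi$ I would first apply effective quantifier elimination to rewrite $\phi$, modulo $V\XF$, as a Boolean combination $B$ of sentences $\mathrm{Sol}_n(\bar c)$ with $\bar c$ a tuple from $k_0$; the truth of each such atom in a model $l$ depends only on which of the finitely many irreducible factors $Q$ of the associated $k_0$-polynomial acquire a root in $\Abs_{k_0}(l)$, hence only on the completion. By the classification of completions, $V\XF \nvdash \phi$ holds precisely when some assignment of truth values to these finitely many atoms that falsifies $B$ is realized by an algebraic extension $K/k_0$ avoiding $V$. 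Restricting to the finite subextension generated by the required roots and using the embedding criterion above, such a $K$ exists if and only if there is a finite extension $F/k_0$ with $V(F) = \varnothing$ that contains a root of the prescribed factors and no root of the forbidden ones. Enumerating finite extensions $F$ and testing these conditions (the last one via the oracle for (2)) semidecides $V\XF \nvdash \phi$, and running the two semidecision procedures in parallel decides (1).

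The main obstacle I anticipate is the refutation side of (2) $\Rightarrow$ (1): one must faithfully reduce ``some completion of $V\XF$ falsifies $\phi$'' to finitely many oracle queries of the shape $V(F) = \varnothing$. This hinges on the effectiveness of quantifier elimination, on the exact description of completions through $\Abs_{k_0}$ in part (1), and---most delicately---on the handling of the negative atoms, i.e.\ arranging an algebraic extension that avoids $V$ while simultaneously omitting certain roots. The key point making this tractable is that the positive requirements are finite, so realizability of a falsifying pattern collapses to the existence of a single finite $V$-avoiding extension with a prescribed root pattern, which is exactly what the oracle can test.
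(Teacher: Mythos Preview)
Your proposal is correct and follows essentially the same approach as the paper, which simply cites \cite[Theorem~4.20]{johnson2023curveexcluding} together with Remark~\ref{rmk:computability}; you have reconstructed in detail the argument behind that citation, using exactly the same three ingredients (computable axiomatization from algebraic hyperbolicity, effective quantifier elimination in the $\mathrm{Sol}_n$-expansion, and the classification of completions via $\Abs_{k_0}$). The delicate point you flag about negative atoms is handled precisely because part~(1) of Proposition~\ref{theorem_properties_of_VXF} realizes \emph{every} algebraic $V$-avoiding extension as $\Abs_{k_0}(l)$ for some model, so the finite witness $F$ itself already determines a completion with the desired root pattern.
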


\section{Remarks and Questions}~\label{sec:rem_and_quest}

We start by explaining some reasons why in general we cannot hope for the existence the model companion of fields excluding $V$ in positive characteristic. 

Let $\KK_0$ be a perfect characteristic $p$ field properly containing the algebraic closure of a finite field. Let $V$ be a genus two, smooth, geometrically integral, geometrically indecomposable, projective curve over $\KK_0$, with $V(\KK_0) = \varnothing$, which is not defined over any finite field. In particular, we can form Frobenius twists of $V$ which we denote by $V^{(p^n)}$ for $n \in \ZZ$. Note that all $V^{(p^n)}$'s are also genus two, smooth, geometrically integral, projective curves over $\KK_0$ as base-changes of such. Moreover, these Frobenius twists can be assumed to be pairwise non-isomorphic (even geometrically). Indeed, to ensure this we can pick $V$ to be defined by the Weierstrass equation for genus two curves, outside countably many closed conditions on parameters defining $V$.

\begin{lemma}~\label{lemma_characteristic_p_Riemann_Hurwitz}
    Assume that $C$ is a genus two, smooth, geometrically integral, projective curve over a field $\KK$ extending $\KK_0$ with a non-constant map to $V_{\KK}$ over $\KK$. Then $C$ is isomorphic to some negative Frobenius twist of $V_{\KK}$, possibly after passing to the algebraic closure of $\KK$.
\end{lemma}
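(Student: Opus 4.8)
The plan is to analyse the non-constant finite morphism $f\colon C \to V_\KK$ between the two smooth projective geometrically integral genus-$2$ curves by separating its inseparable and separable parts, and then to use the Riemann--Hurwitz formula to pin the separable part down completely.

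First I would factor $f$ as $C \xrightarrow{h} C' \xrightarrow{\phi} V_\KK$, where $C'$ is the smooth projective curve whose function field is the separable closure of $f^{*}\KK(V)$ inside $\KK(C)$, so that $h$ is purely inseparable of degree equal to the inseparable degree $p^{r}$ of $f$ and $\phi$ is separable. Working over $\overline{\KK}$ (this is the origin of the ``after passing to the algebraic closure'' caveat, since the theory of purely inseparable covers of curves is cleanest over a perfect field), one identifies $\KK(C') = \KK(C)^{p^{r}}$, which forces $C' \cong C^{(p^{r})}$ with $h$ equal to the $r$-fold relative Frobenius. In particular $C'$ is a Frobenius twist of $C$, hence has the same genus, namely $g(C') = g(C) = 2$.

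Next I would apply Riemann--Hurwitz to the separable morphism $\phi\colon C' \to V_\KK$ of degree $m$. Since $g(C') = g(V) = 2$, the formula reads
\[ 2 = 2m + \deg R, \]
where $R \ge 0$ is the ramification divisor. As $m \ge 1$ and $\deg R \ge 0$, the only possibility is $m = 1$ and $\deg R = 0$, so $\phi$ is a degree-one separable morphism of smooth projective curves, i.e.\ an isomorphism $C' \cong V_\KK$. Combining this with the previous step gives $C^{(p^{r})} \cong V_\KK$, equivalently $C \cong (V_\KK)^{(p^{-r})} = (V^{(p^{-r})})_\KK$, exhibiting $C$ as a negative Frobenius twist of $V_\KK$ (and the separable case $r=0$ recovers the trivial twist $C\cong V_\KK$).

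The hard part will be the bookkeeping around inseparability: justifying the separable/inseparable factorisation of $f$, verifying that the purely inseparable factor is literally an iterate of the relative Frobenius (so that $C'$ is a Frobenius twist of $C$ and genus is preserved), and checking that these identifications hold over the given field $\KK$ after base change to $\overline{\KK}$. Once these are in place, the Riemann--Hurwitz computation is immediate precisely because both curves have genus $2$; this is exactly the numerical coincidence $2 = 2m + \deg R$ that forces the separable part to be trivial and leaves only the Frobenius contribution.
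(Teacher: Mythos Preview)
Your proposal is correct and follows essentially the same route as the paper: factor the map through iterated relative Frobenius to obtain a separable morphism $C^{(p^r)} \to V_\KK$ between genus-two curves, then apply Riemann--Hurwitz to force that separable part to be an isomorphism. The only minor difference is expository: the paper invokes the Frobenius factorisation directly (citing the Stacks Project) over $\KK$ itself, and only passes to $\overline{\KK}$ at the very end, where inverting the Frobenius base-change is needed to go from $C^{(p^r)} \cong V_\KK$ to $C \cong V_\KK^{(p^{-r})}$; your placement of the ``pass to $\overline{\KK}$'' caveat at the factorisation step rather than the untwisting step is slightly off, but harmless.
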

\begin{proof}
    Fix a non-constant map $f:C \to V_{\KK}$ over $\KK$ from the assumptions. By \cite[Proposition 0CD2]{stacks-project} we can decompose it as $C \to C^{(p^n)} \to V_{\KK}$ for some natural $n$, where $C^{(p^n)} \to V_{\KK}$ induces a separable field extension. 

    Note that $C^{(p^n)}$ is also a genus two, smooth, geometrically integral, projective curve over a field $\KK$, by being a flat base-change of such. Hence, by the Riemann-Hurwitz theorem, as in \cite[Lemma 0C1D]{stacks-project}, the map $C^{(p^n)} \to V_{\KK}$ is an isomorphism.

    By definition $C^{(p^n)}$ and $ V_{\KK}$ are base-changes via $n$'th power of Frobenius of $C$ and $V_{\KK}^{(p^{-n})}$ respectively. Passing to the algebraic closure of $\KK$, the Frobenius map becomes an automorphism of $\overline{\KK}$. Hence, we get that over $\overline{\KK}$ the curve $C_{\overline{\KK}}$ is isomorphic to $V_{\overline{\KK}}^{(p^{-n})}$, which finishes the proof.
\end{proof}

\begin{proposition}~\label{proposition_counterexample}
    In the above context, the theory $T_V$ does not admit a model companion.
\end{proposition}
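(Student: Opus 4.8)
The plan is to show that the class of existentially closed models of $T_V$ is not closed under elementary equivalence, which is incompatible with the existence of a model companion. Concretely, I would assume for contradiction that $V\XF$ exists, fix any existentially closed $\KK \models T_V$ (these exist because $T_V$ is a consistent inductive theory), and pass to a nonprincipal ultrapower $\KK^* = \KK^{\Nn}/\cU$ for an ultrafilter $\cU$ on $\Nn$. Since $\KK^* \equiv \KK$ and $\KK \models V\XF$, the ultrapower $\KK^*$ would again be a model of $V\XF$, hence existentially closed. I will contradict this by producing a geometrically integral curve over $\KK^*$ that violates $(\mathrm{II}_V)$, namely a \emph{nonstandard negative Frobenius twist} of $V$.

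First I would record that for every $n \geq 0$ the negative twist $V^{(p^{-n})}$ satisfies $V^{(p^{-n})}(\KK) = \varnothing$: the $n$-fold relative Frobenius $V^{(p^{-n})} \to V$ is a $\KK_0$-morphism, given on coordinates by $x \mapsto x^{p^n}$, so it carries $\KK$-points to $\KK$-points, and $V(\KK) = \varnothing$. Using that $\KK_0$ is perfect, the coefficients $a_i^{1/p^n}$ of $V^{(p^{-n})}$ all lie in $\KK_0$, so I may form the curve $C$ over $\KK^*$ defined by the equations of $V$ with each coefficient $a_i$ replaced by $\alpha_i := (a_i^{1/p^n})_n/\cU \in \KK^*$. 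Geometric integrality, smoothness, and genus $2$ are constructible conditions on the coefficients, uniform in the parameters, so by {\L}o\'{s}'s theorem $C$ is a geometrically integral smooth projective genus-two curve over $\KK^*$, and $C(\KK^*) = \varnothing$ because $V^{(p^{-n})}(\KK) = \varnothing$ for every $n$.

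The hard part will be showing that $C$ carries no non-constant rational map to $V_{\KK^*}$ over $\KK^*$, since this is exactly what makes $C$ violate $(\mathrm{II}_V)$. I would argue through the Hom scheme. Any $\KK^*$-morphism $C \to V_{\KK^*}$ is a $\KK^*$-point of some component $\un{\Hom}^{P}_{\KK^*}(C, V_{\KK^*})$, with $P$ ranging over the absolute (hence standard) index set of Hilbert polynomials; as $\Spec \KK^*$ is connected it lands in a single such component, so its degree is a standard integer. On the other hand, arguing exactly as in Lemma~\ref{lemma_characteristic_p_Riemann_Hurwitz} (separable--inseparable factorisation together with Riemann--Hurwitz and the hypothesis that the twists are pairwise geometrically non-isomorphic), every non-constant map $V^{(p^{-n})} \to V$ is purely inseparable of degree exactly $p^n$; in particular there is no non-constant map of degree $\leq d$ once $p^n > d$. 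Hence for each fixed standard $d$ the set $\{\, n : \un{\Hom}^{\leq d,\,\nc}_{\KK}(V^{(p^{-n})}, V)(\KK) \neq \varnothing \,\}$ is finite, and so not in $\cU$. Applying {\L}o\'{s}'s theorem to the finite-type, parameter-uniform condition ``there is a non-constant morphism of degree $\leq d$ to $V$'' shows that $C$ has no non-constant $\KK^*$-morphism of any standard degree $d$, and therefore none at all. Constant maps are excluded as well, as they would yield a $\KK^*$-point of $V$, whereas $V(\KK^*) = \varnothing$ by {\L}o\'{s}'s theorem. Since $C$ is a smooth curve and $V$ is projective, every rational map $C \dashrightarrow V_{\KK^*}$ is regular, so $C$ admits no non-constant rational map to $V_{\KK^*}$.

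Finally I would assemble the contradiction: $C$ is geometrically integral over $\KK^*$, has no non-constant rational map to $V_{\KK^*}$, yet $C(\KK^*) = \varnothing$ is not Zariski dense, so $(\mathrm{II}_V)$ fails for $\KK^*$ and $\KK^*$ is not existentially closed --- contradicting $\KK^* \equiv \KK \models V\XF$. Hence $T_V$ has no model companion. The two points demanding the most care are the classification of maps between the genus-two twists (degree forced to equal $p^n$, which is precisely where ``genus two'' and ``not defined over a finite field'' enter, as in Lemma~\ref{lemma_characteristic_p_Riemann_Hurwitz}) and the bookkeeping that an honest $\KK^*$-morphism of the finite-type curves $C$ and $V_{\KK^*}$ has standard degree, so that the unbounded growth of the degrees $p^n$ genuinely obstructs transfer through the ultrapower.
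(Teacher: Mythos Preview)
Your argument is correct and reaches the same contradiction as the paper, but by a genuinely different route. The paper works inside the Weierstrass family $\cC \to S$ of genus-two curves: assuming $V\XF$ exists and invoking (the characteristic-$p$ analogue of) elimination of $\exists^\infty$ from Proposition~\ref{theorem_properties_of_VXF}, it argues that the locus $\{s \in S(\KK) : \cC_s(\KK)=\varnothing\}$ is definable, while Lemma~\ref{lemma_characteristic_p_Riemann_Hurwitz} forces this locus to be countable (only finitely many fibres are geometrically isomorphic to each $V^{(p^{-n})}$), yet it contains the infinitely many parameters $s_n$ of the twists themselves---a contradiction by compactness/saturation. You instead build the witness directly: an explicit nonstandard twist $C$ over an ultrapower $\KK^*$, then use the bounded-degree Hom scheme and the degree computation $\deg(f)=p^n$ (which is exactly the content of Lemma~\ref{lemma_characteristic_p_Riemann_Hurwitz} together with the pairwise non-isomorphism of the twists) to rule out non-constant maps of any fixed degree via {\L}o\'s.

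The trade-off: your approach is more elementary in that it never needs elimination of $\exists^\infty$, which the paper has to import from Section~\ref{sec:indecomp} (stated there in characteristic zero and only asserted to carry over). On the other hand, the paper's definability-versus-countability framing makes clearer \emph{why} the obstruction is intrinsically a failure of first-order axiomatisability: the ``bad'' set is an honest definable set that is forced to be both infinite and countable. Both proofs hinge on the same algebraic input---the Riemann--Hurwitz/pairwise-non-isomorphism computation pinning the degree of any map $V^{(p^{-n})}\to V$ to $p^n$---and your bookkeeping that an actual $\KK^*$-morphism has standard degree (equivalently, lands in a single $\un{\Hom}^P$) is exactly the right way to make the ultraproduct transfer go through.
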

\begin{proof}
    Assume for a contradiction that the model companion $V\XF$ exists. Let $\cC \to S$ be the Weierstrass family of genus two smooth, geometrically integral, projective curves. Then each geometric isomorphism type of a fiber appears only finitely many times. For more details, see for example \cite{moduli_genus_two}.
    
    Consider a model $\KK \models V\XF$. Note that for $s \in S(\KK)$ the curve $C = \cC_s$ can be in one of two groups:
    \begin{itemize}
        \item either $C$ admits a non-constant map to $V_{\KK}$ over $\KK$, in which case $C(\KK)$ is empty,
        \item or, $C$ does not admit such a map, in which case $C(\KK)$ is infinite.
    \end{itemize}
    By geometric indecomposability of $V$, one could show that $V\XF$ eliminates $\exists^\infty$ as in Theorem~\ref{theorem_properties_of_VXF}. Hence, the set of $s \in S(\KK)$ belonging to any of two groups above is definable. 

    We claim that the set $\{s \in S(\KK) : \cC_s(\KK) = \varnothing \}$ is in fact countable, which will give a contradiction by compactness (since $\KK$ is an arbitrary model of $V\XF$). This follows from Lemma~\ref{lemma_characteristic_p_Riemann_Hurwitz} and the fact that in $\cC \to S$ each geometric isomorphism type of a fiber appears only finitely many times.
\end{proof}

Next, we list some potential extensions and questions of immediate relevance.
\begin{question} 
\begin{enumerate}
    \item The results presented in Section~\ref{sec:indecomp} are established for geometrically indecomposable varieties $V$. However, the model theory of $V\XF$ remains rather mysterious when $V$ is decomposable. It is anticipated that $V\XF$ would exhibit more exotic behavior in this scenario.
    \item In the proposition above, we give an example of a variety $V$ in characteristic $p$, such that $T_V$ does not admit a model companion. What about such examples in characteristic zero? Does excludability of $V$ implies some form of ``rigidity" of $V$?
    \item What are the concrete models of $V\XF$? For example, is there $k\models V\XF$ such that $k\subset \overline{\mathbb{Q}}$? This seems hard and should be very much related to the notion of diophantine stability~\cite{dio_stab}.
    \item In the examples of $V\XF$ where we can determine the Galois group, the Galois group becomes $\omega$-free, it might be worthwhile finding more examples along this line where the absolute Galois group becomes more interesting. One potential candidate is to look at fields avoiding $V$ in the language where we expanded by a group action. As suggested by the machinery developed in~\cite{piotr_finite_grp}, it is believable that the model companion in this setting exists as well.
\end{enumerate}
\end{question}

\bibliographystyle{amsalpha}
\bibliography{ref}

\newcommand{\etalchar}[1]{$^{#1}$}
\providecommand{\bysame}{\leavevmode\hbox to3em{\hrulefill}\thinspace}
\providecommand{\MR}{\relax\ifhmode\unskip\space\fi MR }
% \MRhref is called by the amsart/book/proc definition of \MR.
\providecommand{\MRhref}[2]{%
  \href{http://www.ams.org/mathscinet-getitem?mr=#1}{#2}
}
\providecommand{\href}[2]{#2}
\begin{thebibliography}{JTWY24}

\bibitem[BD18]{complete_int_have_ample_cotangent}
Damian Brotbek and Lionel Darondeau, \emph{Complete intersection varieties with
  ample cotangent bundles}, Inventiones mathematicae \textbf{212} (2018).

\bibitem[DWY22]{field-top-2}
Philip Dittmann, Erik Walsberg, and Jinhe Ye, \emph{When is the \'etale open
  topology a field topology?}, arXiv:2208.02398, accepted in Israel. J. Math.
  (2022).

\bibitem[EV82]{dig_holes_4}
Antonio~J. Engler and T.~M. Viswanathan, \emph{Digging holes in algebraic
  closures \`a la {A}rtin. {II}}, Ordered fields and real algebraic geometry
  ({S}an {F}rancisco, {C}alif., 1981), Contemp. Math., vol.~8, Amer. Math.
  Soc., Providence, RI, 1982, pp.~351--360. \MR{653191}

\bibitem[EV83]{dig_holes_3}
\bysame, \emph{Digging holes in algebraic closures \`a la {A}rtin. {I}}, Math.
  Ann. \textbf{265} (1983), no.~2, 263--271. \MR{719139}

\bibitem[Fal91]{Faltings}
Gerd Faltings, \emph{Diophantine approximation on abelian varieties}, Annals of
  Mathematics \textbf{133} (1991), no.~3, 549--576.

\bibitem[FGl{\etalchar{+}}05]{fantechi2005fundamental}
Barbara Fantechi, Lothar Göttsche, Luc lllusie, Steven~L. Kleiman, Nitin
  Nitsure, and Angelo Vistoli, \emph{Fundamental algebraic geometry:
  Grothendieck's fga explained}, Mathematical surveys and monographs, American
  Mathematical Society, 2005.

\bibitem[FJ05]{field-arithmetic}
Michael~D. Fried and Moshe Jarden, \emph{Field arithmetic}, second ed.,
  Ergebnisse der Mathematik und ihrer Grenzgebiete. 3. Folge. A Series of
  Modern Surveys in Mathematics [Results in Mathematics and Related Areas. 3rd
  Series. A Series of Modern Surveys in Mathematics], vol.~11, Springer-Verlag,
  Berlin, 2005. \MR{2102046}

\bibitem[Ful98]{Fulton-intersection}
William Fulton, \emph{Intersection theory}, second ed., Ergebnisse der
  Mathematik und ihrer Grenzgebiete. 3. Folge. A Series of Modern Surveys in
  Mathematics [Results in Mathematics and Related Areas. 3rd Series. A Series
  of Modern Surveys in Mathematics], vol.~2, Springer-Verlag, Berlin, 1998.
  \MR{1644323}

\bibitem[GLL14]{purity_even_without_alg_closedness}
Ofer Gabber, Qing Liu, and Dino Lorenzini, \emph{Hypersurfaces in projective
  schemes and a moving lemma}, Duke Mathematical Journal \textbf{164} (2014).

\bibitem[GLP83]{GLP-bound}
L.~Gruson, R.~Lazarsfeld, and C.~Peskine, \emph{On a theorem of {C}astelnuovo,
  and the equations defining space curves}, Invent. Math. \textbf{72} (1983),
  no.~3, 491--506. \MR{704401}

\bibitem[GM80]{dig_holes_5}
Robert~M. Guralnick and Michael~D. Miller, \emph{Maximal subfields of
  algebraically closed fields}, J. Austral. Math. Soc. Ser. A \textbf{29}
  (1980), no.~4, 462--468. \MR{578706}

\bibitem[Got78]{Gotzmann1978}
Gerd Gotzmann, \emph{Eine bedingung für die flachheit und das hilbertpolynom
  eines graduierten ringes.}, Mathematische Zeitschrift \textbf{158} (1978),
  61--70.

\bibitem[HK18]{piotr_finite_grp}
Daniel~M. Hoffmann and Piotr Kowalski, \emph{Existentially closed fields with
  finite group actions}, J. Math. Log. \textbf{18} (2018), no.~1, 1850003, 26.
  \MR{3809585}

\bibitem[Igu60]{moduli_genus_two}
Jun-Ichi Igusa, \emph{Arithmetic variety of moduli for genus two}, Annals of
  Mathematics \textbf{72} (1960), no.~3, 612--649.

\bibitem[Jav20]{javanpeykar2020langvojta}
Ariyan Javanpeykar, \emph{The lang--vojta conjectures on projective
  pseudo-hyperbolic varieties}, Springer International Publishing, 2020.

\bibitem[JK10]{JK-slim}
Markus Junker and Jochen Koenigsmann, \emph{Schlanke {K}\"{o}rper (slim
  fields)}, J. Symbolic Logic \textbf{75} (2010), no.~2, 481--500. \MR{2648152}

\bibitem[JK20]{algebraic_hyperbolicity}
Ariyan Javanpeykar and Ljudmila Kamenova, \emph{Demailly’s notion of
  algebraic hyperbolicity: geometricity, boundedness, moduli of maps},
  Mathematische Zeitschrift \textbf{296} (2020), 1645--1672.

\bibitem[JTWY24]{firstpaper}
Will Johnson, Chieu-Minh Tran, Erik Walsberg, and Jinhe Ye, \emph{The
  \'etale-open topology and the stable fields conjecture}, J. Eur. Math. Soc.
  (JEMS) \textbf{26} (2024), no.~10, 4033--4070. \MR{4768414}

\bibitem[JWY23]{field-top-1}
Will Johnson, Erik Walsberg, and Jinhe Ye, \emph{The \'{e}tale open topology
  over the fraction field of a {H}enselian local domain}, Math. Nachr.
  \textbf{296} (2023), no.~5, 1928--1937. \MR{4608837}

\bibitem[JY23a]{johnson2023curveexcluding}
Will Johnson and Jinhe Ye, \emph{Curve-excluding fields}, arXiv:2303.06063,
  accepted in J. Eur. Math. Soc. (2023).

\bibitem[JY23b]{JohYe_geom}
\bysame, \emph{A note on geometric theories of fields}, Model Theory \textbf{2}
  (2023), 121--132. \MR{4659508}

\bibitem[Kob75]{Kobayashi_theorem}
Shoshichi Kobayashi, \emph{{Negative vector bundles and complex Finsler
  structures}}, Nagoya Mathematical Journal (1975), no.~57, 153 -- 166.

\bibitem[KSW81]{101307mmj1029002559}
Morris Kalka, Bernard Shiffman, and Bun Wong, \emph{{Finiteness and rigidity
  theorems for holomorphic mappings.}}, Michigan Mathematical Journal
  \textbf{28} (1981), no.~3, 289 -- 295.

\bibitem[Laz04a]{Lazarsfeld_positivity_I}
Robert Lazarsfeld, \emph{Positivity in algebraic geometry \rom{1}: Classical
  setting: Line bundles and linear series}, January 2004.

\bibitem[Laz04b]{Lazarsfeld_positivity_II}
\bysame, \emph{Positivity in algebraic geometry \rom{2}: Positivity for vector
  bundles, and multiplier ideals}, January 2004.

\bibitem[McC67]{dig_holes_2}
P.~J. McCarthy, \emph{Maximal fields disjoint from certain sets}, Proc. Amer.
  Math. Soc. \textbf{18} (1967), 347--351. \MR{210690}

\bibitem[Mor94]{Moriwaki1994RemarksOR}
Atsushi Moriwaki, \emph{Remarks on rational points of varieties whose cotangent
  bundles are generated by global sections}, Mathematical Research Letters
  \textbf{2} (1994), 113--118.

\bibitem[MPZ23]{martinpizarro2024noetheriantheories}
Amador Martin-Pizarro and Martin Ziegler, \emph{Noetherian theories},
  arXiv:2307.16826 (2023).

\bibitem[MR18]{dio_stab}
Barry Mazur and Karl Rubin, \emph{Diophantine stability}, Amer. J. Math.
  \textbf{140} (2018), no.~3, 571--616, With an appendix by Michael Larsen.
  \MR{3805014}

\bibitem[Pop96]{pop-embedding}
Florian Pop, \emph{Embedding problems over large fields}, Ann. of Math. (2)
  \textbf{144} (1996), no.~1, 1--34. \MR{1405941}

\bibitem[PW23]{with-anand}
Anand Pillay and Erik Walsberg, \emph{Galois groups of large simple fields},
  Model Theory \textbf{2} (2023), 357--380, With an appendix by Philip
  Dittmann. \MR{4662635}

\bibitem[Qui62]{dig_holes_1}
Frank Quigley, \emph{Maximal subfields of an algebraically closed field not
  containing a given element}, Proc. Amer. Math. Soc. \textbf{13} (1962),
  562--566. \MR{137705}

\bibitem[{Sta}24]{stacks-project}
The {Stacks Project Authors}, \emph{\textit{Stacks Project}},
  \url{https://stacks.math.columbia.edu}, 2024.

\bibitem[Vak24]{Ravi_Vakil_FOAG}
Ravi Vakil, \emph{The rising sea, foundations of algebraic geometry},
  \url{https://math.stanford.edu/~vakil/216blog/FOAGaug2922public.pdf}, 2024.

\bibitem[vBJK24]{vanbommel2019boundedness}
Raymond van Bommel, Ariyan Javanpeykar, and Ljudmila Kamenova,
  \emph{Boundedness in families with applications to arithmetic hyperbolicity},
  Journal of the London Mathematical Society \textbf{109} (2024), no.~1.

\bibitem[vdD89]{lou-dimension}
Lou van~den Dries, \emph{Dimension of definable sets, algebraic boundedness and
  {H}enselian fields}, Ann. Pure Appl. Logic \textbf{45} (1989), no.~2,
  189--209, Stability in model theory, II (Trento, 1987). \MR{1044124}

\bibitem[Wag00]{wagner-minimal-fields}
Frank~O. Wagner, \emph{Minimal fields}, J. Symbolic Logic \textbf{65} (2000),
  no.~4, 1833--1835. \MR{1812183}

\bibitem[WY23]{secondpaper}
Erik Walsberg and Jinhe Ye, \emph{\'{E}z fields}, J. Algebra \textbf{614}
  (2023), 611--649. \MR{4499357}

\end{thebibliography}

\end{document}